\newtheorem{theorem}{Theorem}[section]
\newtheorem{prop}[theorem]{Proposition}
\newtheorem{lemma}[theorem]{Lemma}
\newtheorem{question}[theorem]{Question}
\newtheorem{remark}[theorem]{Remark}
\def\co{\colon\thinspace}
\def\A{\mathcal{A}}
\def\Q{\mathbb{Q}}
\def\R{\mathbb{R}}
\def\Z{\mathbb{Z}}
\def\N{\mathbb{N}}
\def\C{\mathbb{C}}
\def\FR{\operatorname{ind_{FR}}}
\def\EH{\operatorname{ind_{EH}}}
\newcommand{\LL}{\mathcal{L}}
\newcommand{\FF}{\mathcal{F}}
\newcommand{\half}{{\textstyle\frac{1}{2}}}
\numberwithin{equation}{section}
\newtheorem{proposition}[theorem]{Proposition}
\newtheorem{lemma-definition}[theorem]{Lemma-Definition}
\newtheorem{definition}[theorem]{Definition}
\newcommand{\eqdef}{\;{:=}\;}
\newcommand{\mc}[1]{{\mathcal #1}}
\newcommand{\op}{\operatorname}
\newcommand{\M}{\mc{M}}
\newcommand{\End}{\op{End}}
\newcommand{\tensor}{\otimes}
\newcommand{\CZ}{\op{CZ}}
\newcommand{\w}{\omega}
\newcommand{\Per}{\mathcal{P}}
\newcommand{\Mod}{\mathcal{M}}
\newcommand{\Spec}{\operatorname{Spec}}
\newcommand{\Hstd}{\mathcal{H}_{\textrm{std}}}
\newcommand{\Hi}{\mathcal{H}}
\newcommand{\Ac}{\mathcal{A}}
\newcommand{\Id}{\op{Id}}
\newcommand{\codim}{\op{codim}}
\newcommand{\ind}{\op{ind_{EH}}}
\newcommand{\CP}{\mathbb{C}P}
\newcommand{\union}{\bigcup}
\newcommand{\intt}{\bigcap}
\newcommand{\eps}{\varepsilon}
\newcommand{\bpm}{\begin{pmatrix}}
\newcommand{\epm}{\end{pmatrix}}
\renewcommand{\epsilon}{\varepsilon}
\DeclareRobustCommand\widecheck[1]{{\mathpalette\@widecheck{#1}}}
\def\@widecheck#1#2{%
    \setbox\z@\hbox{\m@th$#1#2$}%
    \setbox\tw@\hbox{\m@th$#1%
       \widehat{%
          \vrule\@width\z@\@height\ht\z@
          \vrule\@height\z@\@width\wd\z@}$}%
    \dp\tw@-\ht\z@
    \@tempdima\ht\z@ \advance\@tempdima2\ht\tw@ \divide\@tempdima\thr@@
    \setbox\tw@\hbox{%
       \raise\@tempdima\hbox{\scalebox{1}[-1]{\lower\@tempdima\box
\tw@}}}%
    {\ooalign{\box\tw@ \cr \box\z@}}}
\newcommand*{\rom}[1]{\expandafter\@slowromancap\romannumeral #1@}
\title{The equivalence of Ekeland--Hofer and equivariant symplectic homology capacities}
\author{Jean Gutt and Vinicius G. B. Ramos}
\date{}
\begin{document}
\maketitle

\begin{abstract}
In this paper, we prove that the Ekeland--Hofer capacities coincide on all star-shaped domain in $\R^{2n}$ with the equivariant symplectic homology capacities defined by the first author and Hutchings, answering a 35 years old question.
Along the way, we prove that given a Hamiltonian $H$, the (equivariant) Floer homology of $H$ is chain-complex isomorphic to the Morse homology of the Hamiltonian action functional.
\end{abstract}

\section{Introduction}
Inpired by Gromov's seminal work \cite{gromov}, Ekeland and Hofer defined the concept of a symplectic capacity in \cite{EH}. It was the first nontrivial symplectic invariant. If $X,X'$ are domains in $\R^{2n}$, a symplectic embedding from $X$ to $X'$ is a smooth embedding $\varphi:X \to X'$ such that $\varphi^*\omega=\omega$, where $\omega=\sum_{i=1}^n dx_i\wedge dy_i$ is the standard symplectic form on $\R^{2n}$. A \textbf{symplectic capacity} is a function $c$ that assigns to each subset in $\R^{2n}$ a number $c(X)\in[0,\infty]$ satisfying the following axioms:
\begin{description}
	\item{(Monotonicity)} If $X,X' \subset \R^{2n}$, and if there exists a symplectic embedding $X \hookrightarrow X'$, then $c(X) \le c(X')$.
	\item{(Conformality)} If $r$ is a positive real number, then $c(rX) = r^2 c(X)$.	
\end{description}

Associated to a star-shaped domain $X$ in $\R^{2n}$ are two increasing sequences of symplectic capacities: the Ekeland--Hofer capacities \cite{EH2}, $c_k^{EH}(X)$ and the so-called Gutt--Hutchings capacities \cite{GuH}, $c_k^{GH}(X)$.
The Ekeland--Hofer capacities, whose precise definition is recalled in Section \ref{sec:EH}, are defined using calculus of variations whereas the Gutt--Hutchings capacities (see Section \ref{sec:GH}) are defined using some variant of Floer theory.
Inherent to their definition, the Ekeland-Hofer capacities are only defined for domains in $\R^{2n}$ (or quotient of $\R^{2n}$ by discrete groups). The Gutt--Hutchings capacities on the other hand, are defined for a much larger class of symplectic manifolds: Liouville domains.
A question from 1989 which inspired numerous works in geometric analysis is
\begin{question}\label{question:main}
	How can one extend Ekeland--Hofer capacities to more general symplectic manifolds?
\end{question}

\subsection{The main result}
In a talk in 1988, A. Floer mentioned \cite{WeinsteinFloer} that ``the Ekeland--Hofer capacities must come from some type of $S^1$ equivariant homology''.
The main result of this paper is a rigorous statement of Floer's visionary intuition and an answer to Question \ref{question:main}.
\begin{theorem}\label{thmfancystatement}
	Given a star-shaped domain $X$ in $\R^{2n}$, we have for all $k\in\N_0$:
	\[
		c_k^{EH}(X)=c_k^{CH}(X)
	\]
\end{theorem}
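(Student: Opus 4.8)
The plan is to realize both $c_k^{EH}(X)$ and $c_k^{CH}(X)$ as one and the same spectral number attached to the $S^1$-equivariant theory of the Hamiltonian action functional, the bridge being the chain-level isomorphism between equivariant Floer homology and the Morse homology of the action functional (the second main theorem of this paper). Throughout, fix a star-shaped $X$ and a cofinal family of admissible Hamiltonians $H$ --- radial, $C^2$-small and convex near the origin, linear at infinity with large slope not belonging to the period spectrum of $\partial X$ --- and work with the action functional $\Phi_H(x)=\tfrac12\int_{S^1}\langle -J\dot x,x\rangle\,dt-\int_{S^1}H(x)\,dt$ on $E=H^{1/2}(S^1,\R^{2n})$, equipped with the $S^1$-action by loop rotation.

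\step{1 (reduce each capacity to a number built from $\Phi_H$)} On one side, by definition $c_k^{EH}(X)=\inf_H m_k(\Phi_H)$, where $m_k(\Phi_H)=\inf\{\sup_A\Phi_H:A\subset E,\ \EH(A)\ge k\}$ is the $k$-th minimax value for the Ekeland--Hofer / Fadell--Rabinowitz cohomological index $\EH$, taken after the usual saddle-point reduction and a finite-dimensional approximation with passage to the limit. On the other side, by the Gutt--Hutchings description, $c_k^{CH}(X)=\inf_H a_k(H)$, where $a_k(H)$ is the smallest action level $a$ at which the distinguished generator of the positive $S^1$-equivariant Floer homology of $H$ in degree $n-1+2(k-1)$ is reached by the iterated map $U^{k-1}$ from the filtered homology $FH^{S^1,+,\le a}_{*}(H)$. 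Thus each capacity equals $\inf_H$ of a number extracted from $\Phi_H$, and it suffices to prove $m_k(\Phi_H)=a_k(H)$ for every fixed $H$.

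\step{2 (the chain isomorphism)} Invoke the result proved earlier in the paper: the $S^1$-equivariant Floer complex of $H$ is isomorphic --- as a complex filtered by the action and equipped with its $U$-operator --- to the $S^1$-equivariant Morse--Bott complex of a finite-dimensional Galerkin truncation $\Phi_H^{(N)}$ of the action functional, for $N$ large. The ingredients are standard in spirit: the critical points of $\Phi_H$ are the $1$-periodic orbits of $X_H$; the appropriately renormalized Morse index equals the Conley--Zehnder index; the truncated $L^2$-gradient flow of $\Phi_H$ is the Floer equation; and the Borel construction computing equivariant Floer homology matches the one on the finite-dimensional approximation. Transporting the action filtration and the $U$-map across this isomorphism identifies $a_k(H)$ with the analogous spectral number computed purely inside the $S^1$-equivariant Morse theory of $\Phi_H$.

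\step{3 (cohomological index versus the $U$-tower)} It remains to match the Ekeland--Hofer minimax over index-$\ge k$ subsets with the Morse-theoretic spectral number attached to $U^{k-1}$. This is the classical duality between the Fadell--Rabinowitz cohomological index and the length of the $U$-tower in $S^1$-equivariant (co)homology: a sublevel set $\{\Phi_H\le a\}$ has index $\ge k$ precisely when $U^{k-1}$ is nonzero on the corresponding piece of the equivariant homology. Hence $m_k(\Phi_H)$ equals the smallest action level at which the distinguished class survives under $U^{k-1}$, which by Step~2 is $a_k(H)$; so $m_k(\Phi_H)=a_k(H)$, and taking $\inf_H$ over the cofinal family yields $c_k^{EH}(X)=c_k^{CH}(X)$.

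I expect the main obstacle to be Step~2 in its \emph{equivariant} and \emph{filtered} form: constructing an honest $S^1$-equivariant Morse theory for the strongly indefinite functional $\Phi_H$, which is bounded neither above nor below and whose gradient flow exists only after regularization; controlling the action filtration uniformly in the Galerkin parameter $N$; and matching the two Borel constructions together with their $U$-operators on the nose, not merely up to quasi-isomorphism. A secondary difficulty is making the index/minimax duality of Step~3 precise with the specific pseudo-gradient and index conventions of \cite{EH2}, and verifying that both families of Hamiltonians over which the infima are taken are cofinal, so that the limiting processes computing $c_k^{EH}$ and $c_k^{CH}$ genuinely agree.
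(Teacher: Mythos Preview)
Your overall architecture matches the paper's: reformulate both capacities via action sublevels of $\Ac_H$ (your Step~1 is the paper's Propositions~\ref{prop:eh} and~\ref{prop:ch}), invoke the filtered $S^1$-equivariant chain isomorphism $CM^{S^1}(H)\cong CF^{S^1}(H)$ commuting with $U$ (your Step~2 is Theorem~\ref{thm:iso}), and then relate the Ekeland--Hofer index to the $U$-tower length. But your Step~3 conceals the real work and contains a genuine gap.

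The ``classical duality'' you invoke says that $\FR(\{\eps\le\Ac_H\le c\})\ge k$ if and only if $U^{k-1}\neq 0$ on the corresponding equivariant homology; combined with Step~2 this is exactly the paper's Lemma~\ref{lem:equality}, giving $\kappa_c=\FR(\{\eps\le\Ac_H\le c\})$. However, the Ekeland--Hofer index is \emph{not} the Fadell--Rabinowitz index of the sublevel set: by definition $\ind(\xi)=\min_{h\in\Gamma}\FR\big(\xi\cap h(S^+)\big)$, a minimax over the group $\Gamma$ of admissible deformations and the positive unit sphere. So the identity $\ind(\{\Ac_H\le c\})=\kappa_c$ is not a formality, and the paper proves it as two separate inequalities. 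The easy direction (Lemma~\ref{lem:inf}) uses subadditivity of $\FR$ together with $\ind(\{\Ac_H<\eps\})=0$. The hard direction (Lemma~\ref{lem:sup}) is a genuinely geometric argument: one embeds a thin ellipsoid $B=E(a_1,\dots,a_n)\subset X$, writes down by hand a $\kappa$-complex-dimensional $S^1$-invariant subspace $E_\kappa^+\subset E^+$ on which $\Ac_{H_B}\le a$, pushes $E^-\oplus E^0\oplus E_\kappa^+$ into $\{\Ac_H\le L\}$ via a truncated gradient flow lying in $\Gamma$, and then invokes the computation $\ind(E^-\oplus E^0\oplus V)=\tfrac12\dim V$ from \cite{EH2}. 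This construction uses the specific structure of $\Gamma$ and of the quadratic-at-infinity Hamiltonians and does not follow from any abstract index-versus-$U$ duality; it is precisely what your Step~3 is missing.

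A secondary remark: the paper does not pass through finite-dimensional Galerkin truncations in Step~2. It works directly on $H^{1/2}(S^1,\R^{2n})$ via Abbondandolo--Majer Morse theory, and the chain map is built from moduli spaces of hybrid trajectories (half Morse flow line, half Floer cylinder) rather than from a limiting procedure in $N$.
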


Before giving the idea of the proof of Theorem \ref{thmfancystatement}, we shall recall the Definitions of the aforementionned capacities.
\subsection{The Ekeland--Hofer capacities}\label{sec:EH}
Let $S^1=\R/\Z$.
Let $E$ be the vector space defined by
\[E=H^{1/2}(S^1,\R^{2n})=\left\{x=\sum_{k\in\Z} e^{2\pi i k t} x_k\in L^2(S^1,\R^{2n})\mid  \sum_{k\in \Z} |k| |x_k|^2<\infty\right\}.\]
Here the Fourier coefficients are elements of $\R^{2n}=\C^n$.
We recall that $E$ is a Hilbert space with inner product given by
\[\left\langle x,y\right\rangle_{H^{1/2}}=x_0\cdot y_0+2\pi \sum_{k\in\Z} |k|\, x_k\cdot y_k,\]
where $\cdot$ denotes the real scalar product in $\R^{2n}$. In particular, the $H^{1/2}$-norm is defined by
\[\Vert x \Vert_{H^{1/2}}^2=|x_0|^2+2\pi \sum_{k\in \Z} |k| |x_k|^2.\]
The Hilbert space $E$ has an orthogonal decomposition $E=E^+\oplus E^0\oplus E^-$, 

 A Hamiltonian $H\in C^{\infty}(\R^{2n})$ is said to be admissible for $Y$ if
\begin{itemize}
\item The zero set $\op{supp}(H)=\{x\in\R^{2n}\mid H(x)=0\}$ is compact, $\partial \op{supp}(H)$ is smooth.
\item The radial vector field is transverse to $\partial \op{supp}(H)$.
\item $Y\subset \op{supp}(H)$.
\item There exists $r_0,a$ such that for all $x$ with $\|x\|>r_0$, $H(x)=a\|x\|^2$.
\end{itemize}
The $a$ in the above definition is called the {\bf slope} of $H$.

For $x\in E$, we define
\[
	\Ac_{H}(x) = -\tfrac{1}{2}\int_{0}^{1}{J\dot{x}(t)\cdot x(t) dt} - \int_{0}^{1}{H\big(t,x(t)\big)dt}.
\]

For an $S^1$ invariant subset $X\subset E$, Fadell and Rabinowitz have defined an index $\FR(X)\in\Z$ as follows. Consider the classifying map $f:X\times_{S^1} ES^1\to BS^1=\CP^{\infty}$. So $f$ induces a map $f^*:H^*(\CP^\infty)\to H^*_{S^1}(X)=H^*(X\times_{S^1} ES^1)$. Let $u$ be a generator of the ring $H^*(\CP^\infty)$ and define
\[\FR(X)=\max\{k\in\Z|f^*u^k\neq 0\}.\]
%

Ekeland and Hofer defined a subgroup $\Gamma$ of the group of homeomorphisms of $E$ with compact support (which we recall later) and denoted by $S^+$ the unit sphere in $E^+$. For an $S^1$-invariant subspace $\xi\subset E$, they also defined
\[\ind(\xi)=\min_{h\in\Gamma}\FR(X\cap h(S^+))\]
Finally, they defined
\[c_k^{EH}(H)=\inf\left\{\sup\Ac_{H}(\xi)\big|\ind(\xi)\geq k\right\}.\]

Let us now recall the definition of the group $\Gamma$: a homeomorphism $h$ belongs to $\Gamma$ if $h$ is of the following form:
\[
	h(x)=e^{\gamma^{+}(x)}x^{+} + x^{0} + e^{\gamma^{-}(x)}x^{-} + K(x)
\]
where $\gamma^{+}$ and $\gamma^{-}$ are maps $\Hi\to\R$ which are required to be continuous, $S^{1}$-invariant and mapping bounded sets into bounded sets while $K:\Hi\to\Hi$ is continuous, $S^{1}$-equivariant, mapping bounded sets to pre-compact sets. Additionally, there must exist a number $\rho>0$ such that, either  $\Ac_H(x)\leq0$ or $\|x\|\geq\rho$ implies $\gamma^{+}(x)=\gamma^{-}(x)=0$ and $K(x)=0$.

%

\subsection{The equivariant symplectic homology capacities}\label{sec:GH}
For this section we fix a star-shaped domain $X$ with smooth boundary and an admissible Hamiltonian $H$ of slope $a$. By admissible we mean that $H$ is $C^2$-small in $X$ and $H(x)=a \|x\|^2$ for $\|x\|$ large.

In this situation, one can define an $S^1$-equivariant Floer homology of $H$, see section \ref{SHS1}. It satisfies the following properties:
\begin{proposition}
\label{prop:ch1}
The positive $S^1$-equivariant Floer homology $FH^{S^1,+}(H)$ has the following properties:
\begin{description}
\item{(Action filtration)}
For each $L\in\R$, there is a $\Q$-module $FH^{S^1,+,L}(H)(X,\lambda,\Gamma)$ which is an invariant of $H$. If $L_1<L_2$, then there is a well-defined map
\begin{equation}
\label{eqn:il1l2}
\imath_{L_2,L_1}: FH^{S^1,+,L_1}(H) \longrightarrow FH^{S^1,+,L_2}(H).
\end{equation}
These maps form a directed system, and we have the direct limit
\[
\lim_{L\to\infty} FH^{S^1,+,L}(H) = FH^{S^1,+}(H).
\]
We denote the resulting map $FH^{S^1,+,L}(H) \to FH^{S^1,+}(H)$ by $\imath_L$. 
\item{($U$ map)} There is a distinguished map
\[
U:FH^{S^1,+}(H)\longrightarrow FH^{S^1,+}(H),
\]
which respects the action filtration in the following sense: For each $L\in\R$ there is a map
\[
U_L:FH^{S^1,+,L}(H) \longrightarrow FH^{S^1,+,L}(H).
\]
If $L_1<L_2$ then $U_{L_2}\circ \imath_{L_2,L_1} = \imath_{L_2,L_1}\circ U_{L_1}$. The map $U$ is the direct limit of the maps $U_L$, i.e.
\begin{equation}
\label{eqn:iLU}
\imath_L\circ U_L = U\circ \imath_L.
\end{equation}
\item{(Reeb Orbits)}
If $L_1<L_2$, and if there does not exist a Hamiltonian orbit $\gamma$  with action $\mc{A}(\gamma)\in (L_1,L_2]$, then the map \eqref{eqn:il1l2} is an isomorphism.
\item{($\delta$ map)} There is a distinguished map
\[
\delta: FH^{S^1,+}(H) \longrightarrow H_*(X,\partial X;\Q) \tensor H_*(BS^1;\Q).
\]
\end{description}
\end{proposition}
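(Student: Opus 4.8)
This proposition collects standard structural properties of $S^1$-equivariant symplectic homology at finite slope; it is a mild restatement of results of \cite{GuH}, and the plan is to recall the construction behind it and then read off the four items. One builds $FH^{S^1,+}(H)$ by the Borel-type parametrized construction (following Bourgeois and Oancea): one replaces $ES^1$ by its finite-dimensional models $S^{2N+1}$, chooses an $S^1$-equivariant family $(H_z,J_z)_{z\in S^{2N+1}}$ with each $H_z$ a small perturbation of $H$, and counts Floer cylinders joined by gradient-flow segments of a fixed Morse function on $\CP^N=S^{2N+1}/S^1$. This detour is forced: the $S^1$-action on the space of Floer trajectories of $\Ac_H$ is not free, since the constant orbits are fixed points, so $S^1$-equivariant transversality cannot be arranged directly. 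The generators of the resulting complex $FC^{S^1}_N(H)$ are pairs $(\gamma,p)$ with $\gamma$ a $1$-periodic orbit of $H_z$ and $p$ a critical point on the $\CP^N$-factor, and the key point is that the action of $(\gamma,p)$ equals the Hamiltonian action $\Ac_H(\gamma)$, so the $\CP^N$-factor is invisible to the filtration. One finally passes to the limit $N\to\infty$; independence of all auxiliary data is the usual continuation-map argument.

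\textbf{(Action filtration).} Because the parametrized differential strictly decreases $\Ac_H$, the subgroup $FC^{S^1,L}_N(H)$ spanned by the $(\gamma,p)$ with $\Ac_H(\gamma)\le L$ is a subcomplex; quotienting it by the subcomplex generated by the constant orbits gives $FC^{S^1,+,L}_N(H)$, whose homology in the limit $N\to\infty$ is the $\Q$-module $FH^{S^1,+,L}(H)$, an invariant of $H$ by the continuation argument. For $L_1<L_2$ the inclusion of subcomplexes induces $\imath_{L_2,L_1}$ of \eqref{eqn:il1l2}; these satisfy $\imath_{L_3,L_2}\circ\imath_{L_2,L_1}=\imath_{L_3,L_1}$, so they form a directed system, and since $FC^{S^1,+}(H)=\bigcup_L FC^{S^1,+,L}(H)$ and homology commutes with directed colimits, $\varinjlim_L FH^{S^1,+,L}(H)=FH^{S^1,+}(H)$.

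\textbf{($U$ map) and (Reeb orbits).} On the $\CP^N$-model, the operation lowering the degree on the $\CP^N$-factor by $2$---dually, cap product with the hyperplane class---is a chain map $U_N$ that leaves $\gamma$ untouched, hence preserves every subcomplex $FC^{S^1,+,L}_N(H)$ and commutes with the inclusions; in the limit it yields $U$ together with the maps $U_L$ satisfying $\imath_L\circ U_L=U\circ\imath_L$, which is \eqref{eqn:iLU}. The Reeb-orbit property is then immediate: if no nonconstant $1$-periodic orbit of $H$ has action in $(L_1,L_2]$, then $FC^{S^1,+,L_1}(H)$ and $FC^{S^1,+,L_2}(H)$ have the same generators, so they are equal as complexes and $\imath_{L_2,L_1}=\Id$. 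Translating between the Hamiltonian and Reeb pictures uses the standard correspondence between the nonconstant $1$-periodic orbits of a Hamiltonian that equals $a\|x\|^2$ at infinity and the Reeb orbits of $\partial X$ of period $<a$, with matching actions.

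\textbf{($\delta$ map).} The tautological short exact sequence of complexes $0\to FC^{S^1,0}(H)\to FC^{S^1}(H)\to FC^{S^1,+}(H)\to 0$, with $FC^{S^1,0}(H)$ generated by the constant orbits, gives a long exact sequence, and we take $\delta$ to be its connecting homomorphism. Since $H$ is $C^2$-small on $X$, the subcomplex $FC^{S^1,0}(H)$ is the $S^1$-equivariant Morse complex of a small Morse function on $(X,\partial X)$, on which $S^1$ acts trivially, so by the Künneth formula for the Borel construction $FH^{S^1,0}(H)\cong H_*(X,\partial X;\Q)\tensor H_*(BS^1;\Q)$ up to a degree shift; this gives $\delta: FH^{S^1,+}(H)\too H_*(X,\partial X;\Q)\tensor H_*(BS^1;\Q)$. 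The step I expect to be the genuine obstacle is the foundational one---building the parametrized moduli spaces so that $S^1$-equivariant transversality, compatibility of continuation maps under $\CP^N\hookrightarrow\CP^{N+1}$, and compatibility with every action subcomplex all hold at once. Granting that, the directed-colimit bookkeeping, the chain-level definitions of $U$ and $\delta$, and the Reeb-orbit rigidity are all formal.
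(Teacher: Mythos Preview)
Your proposal is correct and matches the paper's approach: Proposition~\ref{prop:ch1} is not given a standalone proof in the paper but is presented as a summary of known structural properties from \cite{GuH}, with the underlying Borel-type parametrized construction (over $S^{2N+1}$, following Bourgeois--Oancea) recalled in Section~\ref{sec:floer} and the action-filtration step made explicit in Lemma~\ref{lem:dpa}. The only minor imprecision is that in the paper's setup the parametrized action $\mc{A}_H(\gamma,z)$ does depend on $z$ through the family $H_z$, rather than reducing exactly to $\Ac_H(\gamma)$ as you write; this is inessential for the argument.
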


\begin{definition}
\label{def:cknondeg}
Let $(X,\lambda)$ be a nondegenerate Liouville domain and let $k$ be a positive integer. Define
\[
c_k^{GH}(H)\in(0,\infty]
\]
to be the infimum over $L$ such that there exists $\alpha\in FH^{S^{1},+,L}(H)$ satisfying
\begin{equation}
\label{eqn:alpha}
\delta U^{k-1}\imath_L\alpha = [X]\tensor [\op{pt}] \in H_*(X,\partial X)\tensor H_*(BS^1).
\end{equation}
\end{definition}



\subsection{The idea of the Proof of Theorem \ref{thmfancystatement}}
Note that capacities are usually defined for \emph{nice star-shaped domains} i.e. star-shaped domains with smooth non-degenerate boundary and then extended to all star-shaped domains by ``continuity'', see, for instance \cite[Remark 1.3]{GuH}.
Theorem \ref{thmfancystatement} is a consequence of the following result:
\begin{theorem}\label{thm:main}
	Given $X$ a nice star-shaped domain and $H$ an admissible Hamiltonian for $X$, we have for each $k\in\N$,
	\[c_{k}^{EH}(H)=c_k^{CH}(H).\]
\end{theorem}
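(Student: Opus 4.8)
The strategy is to build a common "third model" that both $c_k^{EH}(H)$ and $c_k^{CH}(H)$ compute, namely the $S^1$-equivariant Morse theory of the action functional $\Ac_H$ on a suitable (finite- or infinite-dimensional) manifold of loops. The excerpt already announces the key technical input: for an admissible $H$, the $S^1$-equivariant Floer complex of $H$ is chain-isomorphic to the Morse complex of $\Ac_H$. So the first step is to make this precise: identify the generators of $FH^{S^1,+}(H)$ with the critical points (more precisely the critical $S^1$-orbits, handled via a Morse--Bott or finite-dimensional approximation) of $\Ac_H$ restricted to the positive part of the loop space, and check that under this identification the action filtration by $L$ on the Floer side matches the sublevel-set filtration $\{\Ac_H \le L\}$ on the variational side, the $U$ map matches the cap product with the generator $u$ of $H^*(BS^1)$, and the $\delta$ map matches the connecting/evaluation map to $H_*(X,\partial X)\otimes H_*(BS^1)$. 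This is the content that lets us re-express Definition \ref{def:cknondeg}: $c_k^{CH}(H)$ becomes the infimal $L$ such that a class in the equivariant homology of the sublevel set $\{\Ac_H\le L\}$ hits $[X]\otimes[\mathrm{pt}]$ after applying $u^{k-1}$ and the evaluation map.

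The second step is to match this to the Ekeland--Hofer minimax. Recall $c_k^{EH}(H)=\inf\{\sup\Ac_H(\xi)\mid \ind(\xi)\ge k\}$, where $\ind(\xi)=\min_{h\in\Gamma}\FR(\xi\cap h(S^+))$ and $\FR$ is the Fadell--Rabinowitz index, i.e.\ the largest power of $u$ surviving under the classifying map. The Fadell--Rabinowitz index is itself a minimax quantity built from the cohomology ring $H^*(BS^1)=\Q[u]$; the plan is to show that requiring $\ind(\xi)\ge k$ for a sublevel set $\xi=\{\Ac_H\le L\}$ is equivalent, via a standard duality between cohomological minimax over sublevel sets and homological "hitting $[X]\otimes[\mathrm{pt}]$" conditions, to the existence of the class $\alpha$ in Definition \ref{def:cknondeg}. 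Concretely: the Galois-type deformation group $\Gamma$ is designed so that $\FR(\xi\cap h(S^+))$ is a deformation invariant, and for a star-shaped $H$ the relevant deformations can be realized inside the (equivariant) Morse flow of $\Ac_H$; the power $u^{k-1}$ appearing in the Floer definition and the power $u^k$ in $\FR$ differ only by the bookkeeping shift between the index of $S^+$ itself and the reduced problem, which we reconcile by a direct index computation on the model sphere $S^+$.

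The concrete order of steps: (1) establish the chain isomorphism Floer complex $\cong$ Morse complex of $\Ac_H$, with matching of the filtration, $U$, and $\delta$; (2) translate $c_k^{CH}(H)$ into a cohomological minimax for $\Ac_H$ using this dictionary; (3) prove the Fadell--Rabinowitz minimax $c_k^{EH}(H)$ equals the same cohomological minimax, using invariance under $\Gamma$ and the compatibility of $\Gamma$ with the $\Ac_H$-gradient flow, together with the index computation on $S^+$; (4) conclude equality for all $k$, and deduce Theorem \ref{thmfancystatement} by the continuity/approximation remark for general star-shaped domains. The main obstacle I expect is step (1): carefully setting up the analytic comparison between Floer trajectories and Morse flow lines of $\Ac_H$ on an infinite-dimensional space (handling the saddle-point nature of $\Ac_H$, the $H^{1/2}$ functional-analytic framework, transversality, and the $S^1$-equivariance simultaneously) so that the isomorphism is genuinely filtered and intertwines $U$ and $\delta$; once that dictionary is in place, step (3) is a relatively formal comparison of two minimax quantities that both reduce to powers of $u$, and the group $\Gamma$ was engineered precisely to make that comparison go through.
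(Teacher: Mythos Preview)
Your overall architecture matches the paper: the ``third model'' is precisely the $S^1$-equivariant Morse homology of $\Ac_H$, and your step~(1) is the paper's Theorem~\ref{thm:iso}. The paper also carries out your step~(2) essentially as you describe, reformulating $c_k^{CH}(H)$ via the quantity $\kappa_c(H)$ (the largest $j$ with $U^j$ nonzero on the image of the action-$c$ inclusion) and identifying $\kappa_c(H)$ with $\FR(\{\eps\le\Ac_H\le c\})$ through the Morse/Floer isomorphism (Lemma~\ref{lem:equality}).

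The gap is in your step~(3). You call it ``relatively formal'' and propose to get it from $\Gamma$-invariance, compatibility with the gradient flow, and an index computation on $S^+$. But $\ind_{EH}(\{\Ac_H\le c\})=\min_{h\in\Gamma}\FR(\{\Ac_H\le c\}\cap h(S^+))$ is \emph{not} obviously equal to $\FR(\{\eps\le\Ac_H\le c\})$: the former involves a minimum over a large deformation group and an intersection with a sphere in $E^+$, the latter does not, and no purely formal duality collapses that minimum. The paper does not attempt a direct identification; instead it proves the two inequalities by completely different mechanisms. The bound $\ind_{EH}\le\FR$ (Lemma~\ref{lem:inf}) comes from the \emph{subadditivity} of the Fadell--Rabinowitz index, splitting $\{\Ac_H\le c\}$ into the low-action piece $\{\Ac_H<\eps\}$ (which contributes nothing after intersecting with $h(S^+)$) and the slab $\{\eps\le\Ac_H\le c\}$. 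The reverse bound $\ind_{EH}\ge\kappa_c$ (Lemma~\ref{lem:sup}) is not formal at all: it requires embedding a thin ellipsoid $B=E(a_1,\dots,a_n)\subset X$, writing down an explicit $\kappa$-complex-dimensional $S^1$-invariant subspace $E_\kappa^+\subset E^+$ on which $\Ac_{H_B}\le a$, invoking the Ekeland--Hofer computation $\ind_{EH}(E^-\oplus E^0\oplus V)=\tfrac12\dim V$, and then pushing this into the sublevel set of $\Ac_H$ via a truncated gradient flow (which lies in $\Gamma$) together with monotonicity of the capacities under the inclusion $B\subset X$. None of these ingredients---subadditivity, the ellipsoid, the explicit $E_\kappa^+$---appear in your plan, and without them step~(3) does not go through. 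Your instinct that step~(1) is the hardest analytically is correct, but step~(3) is where the genuinely Ekeland--Hofer-specific ideas enter, and it is not a bookkeeping exercise.
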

\noindent We now outline the sequence of results which lead to the proof of Theorem \ref{thm:main}.\\

The first step is to reformulate both sequences of capacities:
\begin{proposition}\label{prop:eh}
	Let $H$ be an admissible Hamiltonian of slope $L>0$. Then for each $k\in\N$,
	\[
		c_k^{EH}(H)=\left\{\begin{array}{ll}\inf\left\{c\mid \op{ind}_{EH}\left(\{\Ac_H\le c\}\right)\ge k\right\},&\text{ if }k\le \op{ind}_{EH}\left(\{\Ac_H\le L\}\right),\\
		\infty,&\text{ if }k> \op{ind}_{EH}\left(\{\Ac_H\le L\}\right).\end{array}\right.
	\]
\end{proposition}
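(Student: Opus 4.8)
The plan is to deduce the Proposition from the case-free identity
\[
c_k^{EH}(H)=\inf\bigl\{c\in\R \mid \ind(\{\Ac_H\le c\})\ge k\bigr\},
\]
with the convention $\inf\emptyset=+\infty$, and then to decide when the right-hand side is finite. Throughout we use the elementary monotonicity of $\ind$: if $\xi\subseteq\xi'$ are $S^1$-invariant, then $\ind(\xi)\le\ind(\xi')$, because for every $h\in\Gamma$ the inclusion $\xi\cap h(S^+)\hookrightarrow\xi'\cap h(S^+)$ is $S^1$-equivariant, so the classifying map of the first set factors up to homotopy through that of the second, forcing $\FR(\xi\cap h(S^+))\le\FR(\xi'\cap h(S^+))$; taking the minimum over $h$ gives the claim. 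Granting this, the inequality ``$\le$'' in the displayed identity is immediate, since $\{\Ac_H\le c\}$ is $S^1$-invariant with $\sup_{\{\Ac_H\le c\}}\Ac_H\le c$, hence is admissible in the definition of $c_k^{EH}(H)$ as soon as $\ind(\{\Ac_H\le c\})\ge k$. For ``$\ge$'', given $S^1$-invariant $\xi$ with $\ind(\xi)\ge k$ and $s\eqdef\sup_\xi\Ac_H<\infty$, one has $\xi\subseteq\{\Ac_H\le s\}$, so $\ind(\{\Ac_H\le s\})\ge\ind(\xi)\ge k$ and $s$ lies in the set over which we infimize; passing to the infimum over all such $\xi$ gives ``$\ge$'', while the absence of any such $\xi$ forces both sides to equal $+\infty$.

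With the identity in hand, the Proposition becomes the assertion that $c\mapsto\ind(\{\Ac_H\le c\})$ is constant on $[L,\infty)$, with value $\ind(\{\Ac_H\le L\})$. Indeed, combining this with monotonicity for $c<L$, the set $\{c\mid\ind(\{\Ac_H\le c\})\ge k\}$ is nonempty precisely when $k\le\ind(\{\Ac_H\le L\})$: if so, $L$ belongs to it and the infimum is finite and at most $L$; if not, the set is empty and the infimum is $+\infty$. This is exactly the stated dichotomy, and in the finite case the value is $\inf\{c\mid\ind(\{\Ac_H\le c\})\ge k\}$ as required.

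The remaining stabilization claim is the heart of the proof. Its geometric input is that an admissible Hamiltonian of slope $L$ has no $1$-periodic orbit of action $\ge L$: where $H$ is $C^2$-small the $1$-periodic orbits are the constants, with action close to $0$; near $\partial X$ they are in bijection with the Reeb orbits on $\partial X$ of period $<L$, and their actions lie in $(0,L)$; and where $H(x)=L\|x\|^2$ the flow is the linear rotation $x\mapsto e^{2LJt}x$, any $1$-periodic orbit of which has action $0$. Hence all critical values of $\Ac_H$ lie in $(-\infty,L)$, so for every $c\ge L$ the interval $[L,c]$ is free of critical values. Since $\Ac_H$ satisfies the Palais--Smale condition on $E=H^{1/2}(S^1,\R^{2n})$, one deforms $\{\Ac_H\le c\}$ onto $\{\Ac_H\le L\}$ along a truncated pseudo-gradient flow $\psi$, and the Ekeland--Hofer deformation lemma packages this so as to give $\ind(\{\Ac_H\le c\})=\ind(\{\Ac_H\le L\})$ for all $c\ge L$. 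I expect this $\Gamma$-compatible deformation to be the main obstacle: because $\ind$ is a minimum over the group $\Gamma$ and refers to the auxiliary sphere $S^+\subset E^+$, one cannot merely invoke homotopy invariance of $\FR$ for the sublevel sets themselves; instead one composes $\psi$ with a near-minimizing $h_0\in\Gamma$ (legitimate since $\Gamma$ is a group, so $\psi^{-1}\circ h_0\in\Gamma$) and must verify that the truncated flow still carries $\{\Ac_H\le c\}\cap(\psi^{-1}\circ h_0)(S^+)$ $S^1$-equivariantly into $\{\Ac_H\le L\}\cap h_0(S^+)$ — that is, that the deformation can be confined to the bounded regions $\FR$ actually detects while respecting the support conditions defining $\Gamma$. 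Everything else is the soft bookkeeping of the first two paragraphs.
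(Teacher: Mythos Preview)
Your argument is correct and matches the paper's proof essentially line for line: both directions of the identity $c_k^{EH}(H)=\inf\{c\mid\ind(\{\Ac_H\le c\})\ge k\}$ are obtained exactly as you do (the sublevel set is itself a competitor for ``$\le$'', and any competitor $\xi$ sits in its own sublevel set for ``$\ge$''), and the dichotomy then reduces to stabilization of $c\mapsto\ind(\{\Ac_H\le c\})$ for $c\ge L$. On the stabilization you are more cautious than the paper, which simply asserts that the truncated flow of $-\nabla\Ac_H$ lies in $\Gamma$ and pushes $\{\Ac_H\le c\}$ into $\{\Ac_H\le L\}$; your worry dissolves once you note that $\ind$ is $\Gamma$-invariant (precisely because $\Gamma$ is a group and $\FR$ is preserved by $S^1$-equivariant homeomorphisms, which is the substitution $h\mapsto\psi^{-1}\circ h$ you already wrote down), so monotonicity plus $\Gamma$-invariance gives $\ind(\{\Ac_H\le c\})=\ind(\psi(\{\Ac_H\le c\}))\le\ind(\{\Ac_H\le L\})$ directly.
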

Suppose that $c\le L$ is a regular value of $\Ac_H$. Now let \[\kappa_c:= \max\{j\mid \exists\,\sigma\in\op{Im}\left(FH^{S^1,+,c}_{2j+n-1}(H)\to FH^{S^1,+}(H)\right)\text{ such that }U^j\sigma\neq 0\}.\]
An immediate reformulation of $c_k^{GH}$ is given by:
\begin{proposition}\label{prop:ch}
Let $H$ be an admissible Hamiltonian of slope $L>0$. Then for each $k\in\N$,
\[c_k^{CH}(H)=\left\{\begin{array}{ll}\inf\left\{c\mid \kappa_c(H)\ge k\right\},&\text{ if }k\le \kappa_L(H),\\
\infty,&\text{ if }k> \kappa_L(H).\end{array}\right.\]
\end{proposition}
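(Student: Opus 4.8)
The plan is to deduce Proposition \ref{prop:ch} directly from Definition \ref{def:cknondeg} by unwinding the condition \eqref{eqn:alpha} in terms of the quantity $\kappa_c(H)$ and then relating the relevant homological degrees. The key observation is that the target $H_*(X,\partial X;\Q)\otimes H_*(BS^1;\Q)$ of the $\delta$ map is concentrated in even degrees coming from $H_*(BS^1)$ (tensored with the top class $[X]$), so the class $[X]\otimes[\op{pt}]$ lives in the appropriate degree; the degree bookkeeping is exactly what forces the shift by $n-1$ and the factor of $2$ in the definition of $\kappa_c$, matching the grading convention for $FH^{S^1,+}$ in which a Reeb orbit $\gamma$ contributes in degree related to its Conley--Zehnder index.

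First I would fix a regular value $c\le L$ and rephrase the defining condition of $c_k^{CH}(H)$: by Definition \ref{def:cknondeg}, $c_k^{CH}(H)\le c$ if and only if there is a class $\alpha\in FH^{S^1,+,c}(H)$ with $\delta U^{k-1}\imath_c\alpha = [X]\otimes[\op{pt}]$. I would argue that, because $\delta$ and $U$ are the direct limits of filtered maps and commute with the $\imath_L$'s (by the $U$-map and action-filtration properties in Proposition \ref{prop:ch1}), this condition is equivalent to the existence of $\sigma\in\op{Im}(FH^{S^1,+,c}_{2(k-1)+n-1+?}(H)\to FH^{S^1,+}(H))$ on which $U^{k-1}$ is nonzero, after one checks that $\delta$ is an isomorphism (or at least injective on the relevant graded piece) onto the line spanned by $[X]\otimes[\op{pt}]$ in that degree. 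Concretely: $\delta U^{k-1}\imath_c\alpha\neq0$ forces $U^{k-1}\imath_c\alpha\neq0$, and conversely any nonzero element in the bottom nonvanishing degree of $\op{Im}(FH^{S^1,+}\to\cdots)$ maps under $\delta$ to a nonzero multiple of $[X]\otimes[\op{pt}]$; rescaling gives the exact equality. Unwinding the degree shows $\alpha$ may be taken in degree $2(k-1)+n+1 = 2k+n-1$, which is precisely the degree appearing in the definition of $\kappa_c$ with $j=k-1$... wait, one must be careful: $\kappa_c$ is defined with $U^j\sigma\neq0$ for $\sigma$ in degree $2j+n-1$, so $U^{k-1}\sigma\neq0$ with $\sigma$ in degree $2(k-1)+n-1$ corresponds to $\kappa_c(H)\ge k-1+1 = k$? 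Here I would simply track the indices carefully so that ``$\kappa_c(H)\ge k$'' becomes equivalent to ``there exists $\alpha$ of the right degree with $\delta U^{k-1}\imath_c\alpha=[X]\otimes[\op{pt}]$'', i.e. to ``$c_k^{CH}(H)\le c$''.

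Having established the equivalence ``$c_k^{CH}(H)\le c \iff \kappa_c(H)\ge k$'' for regular values $c\le L$, I would then take the infimum over such $c$ to obtain the first branch $c_k^{CH}(H)=\inf\{c\mid\kappa_c(H)\ge k\}$; passing from all $c$ to regular $c$ is harmless since regular values are dense and all maps in sight are continuous in the filtration parameter in the appropriate sense. For the second branch, I would observe that for $c\ge L$ the inclusion $\{\Ac_H\le c\}\hookrightarrow\{\Ac_H\le L\}$ is, up to the action of the linear flow at infinity, cofinal — there are no Hamiltonian orbits of action above $L$ for an admissible $H$ of slope $L$ because $H(x)=a\|x\|^2$ outside a compact set has no nonconstant $1$-periodic orbits — so by the Reeb Orbits property \eqref{eqn:il1l2} is an isomorphism for $L<L_1<L_2$, whence $\kappa_c(H)=\kappa_L(H)$ for all $c\ge L$ and the condition $\kappa_c(H)\ge k$ can never be achieved for $k>\kappa_L(H)$, giving $c_k^{CH}(H)=\infty$. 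The main obstacle I anticipate is the bookkeeping in the first paragraph: pinning down exactly why $\delta$ detects $[X]\otimes[\op{pt}]$ precisely in degree $2j+n-1$ after applying $U^j$, i.e. verifying that $\delta$ restricted to the relevant graded summand is an isomorphism onto $\Q\cdot([X]\otimes[\op{pt}])$ and that it intertwines the grading shifts of $U$ with the $H_*(BS^1)$-module structure on the target — this is where the structural input from Proposition \ref{prop:ch1} does all the work and where an off-by-one in the Conley--Zehnder normalization would be fatal.
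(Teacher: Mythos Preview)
Your approach is correct and matches the paper's treatment: the paper states Proposition~\ref{prop:ch} as an ``immediate reformulation'' of Definition~\ref{def:cknondeg} and gives no separate proof, so unwinding the defining condition~\eqref{eqn:alpha} and tracking degrees is exactly what is intended. The index bookkeeping you flag is the only real content---it reduces to the observation that in the relevant degree $\delta$ carries the one-dimensional piece of $FH^{S^1,+}(H)$ isomorphically onto $\Q\cdot([X]\otimes[\op{pt}])$, so ``$U^{j}\sigma\neq 0$ in the correct degree'' and ``$\delta U^{j}\sigma$ is a nonzero multiple of $[X]\otimes[\op{pt}]$'' are equivalent; once the Conley--Zehnder normalization is fixed consistently with \cite{GuH} the off-by-one you worry about disappears.
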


The main ingredient in proving that $\kappa_c(H)=\op{ind}_{EH}\left(\{\Ac_H\le c\}\right)$ and therefore proving Theorem \ref{thm:main} is the isomorphism of the positive $S^1$-equivariant Morse and Floer homology complexes of $H$. We refer to Sections \ref{HMAH} and \ref{SHS1} for definitions and constructions of those complexes.
\begin{theorem}\label{thm:iso}
Let $H$ be an admissible Hamiltonian. Then for all $c>0$, there exists an isomorphism
\[CM^{S^1,c}(H)\cong CF^{S^1,c}(H).\]
Moreover, it commutes with the $U$ map.
\end{theorem}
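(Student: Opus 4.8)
The plan is to construct the isomorphism $CM^{S^1,c}(H) \cong CF^{S^1,c}(H)$ by identifying the generators of both complexes and matching the moduli spaces of trajectories. Both complexes are, roughly, generated by $S^1$-families of critical points of (a perturbation of) the Hamiltonian action functional $\Ac_H$, the key point being that $\Ac_H$ is defined on the same space $E = H^{1/2}(S^1,\R^{2n})$ regardless of whether we view it from the Morse or the Floer perspective. On the Morse side, the differential counts negative gradient flow lines of $\Ac_H$ with respect to a suitable metric on $E$; on the Floer side, the differential counts solutions of the Floer equation, which is precisely the (perturbed) negative $L^2$-gradient flow of $\Ac_H$ written out in terms of $J$ and $\nabla H$. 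First I would recall carefully the precise setup of Section \ref{HMAH} (Morse theory of the action functional on $E$) and Section \ref{SHS1} (the $S^1$-equivariant Floer complex), pin down the common Morse--Bott data — the $S^1$-orbits of $1$-periodic Hamiltonian orbits graded by their Conley--Zehnder index — and then set up the chain map.

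The key steps, in order: (1) Fix a common auxiliary perturbation making the orbits nondegenerate as critical manifolds (Morse--Bott $S^1$-orbits), so that the generators of $CM^{S^1,c}(H)$ and $CF^{S^1,c}(H)$ are literally the same set, indexed by critical $S^1$-orbits below action level $c$ together with the Borel construction data (powers of $u$). (2) Show that the two differentials count the same moduli spaces: for the Morse complex this is the cascade/flow-line count for $\Ac_H$ on $E$, and for the Floer complex this is the count of Floer cylinders; the content is that the Floer equation $\partial_s u + J(\partial_t u - X_H(u)) = 0$ is exactly the downward gradient equation for $\Ac_H$ with respect to the $L^2$-metric, and that the $H^{1/2}$-gradient flow used on the Morse side is related to it by a positive change of metric that does not change the space of finite-energy flow lines connecting given critical orbits (one may invoke Abbondandolo--Majer-type arguments, or more directly observe that the relevant compactness and transversality statements only use the action as a Lyapunov function). (3) Promote this to the $S^1$-equivariant level: both sides use the same Borel/parametrized construction over $BS^1 = \CP^\infty$, so the equivariant differentials are assembled from the same building blocks and the identity on generators is an equivariant chain isomorphism. (4) Check naturality with respect to the $U$ map: on both sides $U$ is cap product with the generator of $H^2(BS^1)$ (equivalently, the shift in the Borel tower), so it is manifestly intertwined by the identification. (5) Restrict everything to the positive part (orbits of positive action, or equivalently the subcomplex above the constant orbits) and to the action window $< c$, which is automatic since the identification preserves action.

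I expect the main obstacle to be Step (2): rigorously matching the \emph{Floer} moduli spaces with the \emph{Morse} moduli spaces of $\Ac_H$ on the infinite-dimensional, non-smooth-gradient-flow space $E$. The subtleties are that $\Ac_H$ does not have a well-defined gradient flow on $H^{1/2}(S^1,\R^{2n})$ in the naive sense (the flow is only defined in a weak/forward sense, as in the Ekeland--Hofer variational setup), so one must either work with the Abbondandolo--Majer framework for the Hessian having infinite-dimensional positive and negative eigenspaces, or — more in the spirit of this paper — use an explicit $H^{1/2}$-compatible metric for which the negative gradient flow \emph{is} well-posed and whose flow lines are in bijection with Floer cylinders via an interpolation/homotopy argument, proving that no bifurcation occurs (an energy/action a priori bound plus an index computation showing the linearized operators agree). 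Establishing this bijection compatibly with orientations and with the $S^1$-action is the technical heart of the theorem; everything else (the equivariant bookkeeping, the $U$-map compatibility, the action truncation) is then formal.
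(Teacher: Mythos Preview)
Your Steps (1), (3), (4), (5) are fine and essentially formal once Step (2) is in place, but Step (2) as you describe it is not the right mechanism, and the paper does something genuinely different there.

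You propose to show that the Morse differential and the Floer differential \emph{count the same moduli spaces}, arguing that the Floer equation is the $L^2$-gradient flow of $\Ac_H$ and that passing to the $H^{1/2}$-gradient is ``a positive change of metric that does not change the space of finite-energy flow lines.'' This is not true in any direct sense: the $H^{1/2}$-gradient trajectories of $\Ac_H$ (which are ODEs on $E$, with the compactness coming from the Abbondandolo--Majer structure $\Ac_H = \tfrac12(Lx,x)+b(x)$) and the Floer cylinders (which are elliptic PDEs on $\R\times S^1$) are different objects living in different function spaces, and there is no reason their signed counts should agree orbit-by-orbit. Your fallback suggestion of an ``interpolation/homotopy argument proving no bifurcation occurs'' between the two metrics is not known to work and would face the difficulty that the $L^2$-gradient flow is not even well-posed as an ODE on $E$.

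What the paper actually does (Section~\ref{sec:isomorphism}) is construct a chain map $\Phi\colon CM^{S^1}(H)\to CF^{S^1}(H)$ by counting \emph{hybrid} trajectories in the style of Abbondandolo--Kang and Hecht: pairs $(u,\eta)$ on the half-cylinder $Z=[0,\infty)\times S^1$ satisfying the parametrized Floer equation, with the boundary condition $(u(0,\cdot),\eta(0))\in W^u_X(x,p)$ in the \emph{Morse} unstable manifold and $\lim_{s\to\infty}(u(s,\cdot),\eta(s))=(y,q)$. Transversality is achieved by perturbing the Morse pseudo-gradient $X$ (not $H$ or $J$); the linearized operator is Fredholm of the expected index even though the boundary condition is not Lagrangian. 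Compactness is an $H^1_{\mathrm{loc}}$-precompactness statement proved via the estimates \eqref{eq:step1compactness}--\eqref{eq:step2compactness}. The resulting $\Phi$ is \emph{not} the identity on generators: it strictly decreases action off the diagonal, and on the diagonal $\Mod_{hyb}((x,p),(x,p))$ consists of the single constant solution, so in the basis ordered by increasing action $\Phi$ is upper-triangular with $1$'s on the diagonal, hence an isomorphism. Commutation with $U$ then follows as in \cite{GuH}. The missing idea in your proposal is precisely this half-cylinder hybrid moduli space; once you have it, the rest of your outline goes through.
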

The main direct consequence, of Theorem \ref{thm:iso} is the following:
\begin{lemma}\label{lem:equality}
	Let $H$ be an admissible Hamiltonian for $X$. Then, for all $c>0$,
\[ \kappa_c(H)=\op{ind}_{FR}\left(\{\eps\leq\Ac_H\le c\}\right).\]
\end{lemma}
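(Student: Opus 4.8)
The plan is to combine Theorem~\ref{thm:iso} with a standard spectral/filtered argument relating the Fadell--Rabinowitz index of sublevel sets to the behaviour of the $U$-map on $S^1$-equivariant Morse homology. First I would recall that, after identifying $CM^{S^1,c}(H)$ with $CF^{S^1,c}(H)$ in a way that intertwines the $U$-maps (Theorem~\ref{thm:iso}), the quantity $\kappa_c(H)$ can be computed entirely on the Morse side: it equals the largest $j$ for which there is a class $\sigma$ in the image of $HM^{S^1,c}_{2j+n-1}(H)\to HM^{S^1,+}(H)$ with $U^j\sigma\neq 0$. So the statement reduces to a purely Morse-theoretic comparison between this number and $\op{ind}_{FR}(\{\eps\le\Ac_H\le c\})$.

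Next I would invoke the description of $S^1$-equivariant Morse homology of the Hamiltonian action functional as the $S^1$-equivariant (Borel) homology of the sublevel sets, so that $HM^{S^1,+,c}(H)$ computes $H^{S^1}_*(\{\eps\le\Ac_H\le c\})$ (the shift by $n$ and the passage to the ``positive'' part account for the constant loops / $E^-$ directions, which is where the grading convention $2j+n-1$ comes from). Under this identification the $U$-map corresponds to the cap product with the generator $u\in H^2(BS^1)$, i.e. to the module structure of Borel homology over $H^*(BS^1)=\Q[u]$. The Fadell--Rabinowitz index $\op{ind}_{FR}(\xi)$ is by definition the largest $k$ with $f^*u^k\neq 0$ in $H^*_{S^1}(\xi)$; by the standard duality between the $\Q[u]$-module structures on $H^*_{S^1}(\xi)$ and $H_*^{S^1}(\xi)$, this is precisely the largest $j$ for which some class survives $j$ applications of the $U$-map after mapping to the limit $HM^{S^1,+}(H)$. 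Carefully matching the two extremal characterizations — keeping track of the degree shift and the fact that $\kappa_c$ takes the image in the full homology $FH^{S^1,+}(H)$ rather than in $FH^{S^1,+,c}(H)$ itself, which matches the ``$h\in\Gamma$'' deformation built into $\op{ind}_{FR}$ here — gives the claimed equality $\kappa_c(H)=\op{ind}_{FR}(\{\eps\le\Ac_H\le c\})$.

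The main obstacle I anticipate is bookkeeping rather than conceptual: making the identification between equivariant Morse homology of $\Ac_H$ and the Borel homology of its sublevel sets precise enough that the $U$-map is literally cap product with $u$, and verifying that the grading conventions (the $+n-1$ shift, and the role of the ``$\eps$'' lower cutoff which removes the constant-loop part) line up so that ``$U^j\sigma\neq0$ for $\sigma$ in the image of degree $2j+n-1$'' translates exactly into ``$f^*u^j\neq0$''. One must also be slightly careful that $\kappa_c$ is defined via the image in $FH^{S^1,+}(H)=\varinjlim_L FH^{S^1,+,L}(H)$, so the comparison really is with the Fadell--Rabinowitz index as Ekeland--Hofer define it (with the infimum over $h\in\Gamma$), not just with the naive index of the sublevel set; this is where the compatibility of the direct limit with the $U$-action, asserted in Proposition~\ref{prop:ch1}, is used. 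Once these identifications are nailed down, the equality is immediate from the two ``$\max$'' formulas.
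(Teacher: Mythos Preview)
Your overall strategy matches the paper's: reduce to the Morse side via Theorem~\ref{thm:iso}, identify $S^1$-equivariant Morse homology of $\Ac_H$ with the Borel equivariant homology of the sublevel set (the paper cites \cite{Abbondandolothesis} for $H^{*}_{S^1}(\{\eps\le\Ac_H\le c\})\simeq HM^{*}_{S^1}(\{\eps\le\Ac_H\le c\})$), and rewrite $\op{ind}_{FR}$ as the largest power of the $U$-map acting nontrivially. The paper isolates this last step as a separate proposition, proving $\FR(X)=\max\{k\mid U^k\neq 0\}$ via the cap/cup duality you also invoke.

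There is, however, a genuine misreading in your final paragraph. The lemma compares $\kappa_c(H)$ with $\op{ind}_{FR}(\{\eps\le\Ac_H\le c\})$, the \emph{plain} Fadell--Rabinowitz index of the sublevel set, \emph{not} with the Ekeland--Hofer index $\op{ind}_{EH}$ that carries the infimum over $h\in\Gamma$. These are different objects in the paper (compare the definitions in \S\ref{sec:EH}: $\FR$ is intrinsic to the space, while $\EH(\xi)=\min_{h\in\Gamma}\FR(\xi\cap h(S^+))$). So your proposed mechanism for absorbing the ``image in the direct limit'' part of $\kappa_c$ --- matching it to the $\min_{h\in\Gamma}$ --- cannot be right, because that minimum simply is not present in the statement you are proving. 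The relation between $\op{ind}_{FR}$ and $\op{ind}_{EH}$ is handled separately, in Lemmas~\ref{lem:inf} and~\ref{lem:sup}, not here.

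Your underlying worry is legitimate: $\kappa_c$ asks for $U^j\sigma\neq 0$ with $\sigma$ in the \emph{image} of $FH^{S^1,+,c}(H)\to FH^{S^1,+}(H)$, whereas $\FR$ of the sublevel set is intrinsic. You should address this directly on the Morse/Borel side (using the structure of $H^{S^1}_*$ over $\Q[u]$ and the specific degree constraint $2j+n-1$), rather than by appealing to $\Gamma$. The paper's own argument at this point is terse and does not spell this out either, so filling it in carefully would be a genuine contribution; just do not route it through $\op{ind}_{EH}$.
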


The proof of Theorem \ref{thm:main} then, consists of the two following lemmas:
\begin{lemma}\label{lem:inf}
	Let $H$ be an admissible Hamiltonian for $X$ of slope $L$. Then, for all $c>0$,
\[\op{ind}_{EH}\left(\{\Ac_H\le c\}\right) \leq \op{ind}_{FR}\left(\{\eps\leq\Ac_H\le c\}\right).\]
\end{lemma}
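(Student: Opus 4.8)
The plan is to unwind $\op{ind}_{EH}(\{\Ac_H\le c\})=\min_{h\in\Gamma}\op{ind}_{FR}\big(\{\Ac_H\le c\}\cap h(S^+)\big)$ and to test the minimum against the identity, which lies in $\Gamma$. Hence it suffices to show
\[
\op{ind}_{FR}\big(\{\Ac_H\le c\}\cap S^+\big)\ \le\ \op{ind}_{FR}\big(\{\eps\le\Ac_H\le c\}\big),
\]
and, since an $S^1$-equivariant map $A\to B$ induces a map of Borel constructions over $\CP^\infty$ and therefore pulls back the generator $u\in H^*(\CP^\infty)$ compatibly — whence the set of exponents $k$ with nonvanishing image $f^{*}u^{k}$ can only shrink under precomposition — it is enough to construct an $S^1$-equivariant map
\[
\phi\co\{\Ac_H\le c\}\cap S^+\ \longrightarrow\ \{\eps\le\Ac_H\le c\}.
\]

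The idea for $\phi$ is to flow the action upward. Let $\Phi^s$, $s\ge0$, be the positive gradient flow of $\Ac_H$ for the $H^{1/2}$ metric; it is $S^1$-equivariant because $\Ac_H$ is $S^1$-invariant, it is complete in forward time because $\nabla\Ac_H$ is a bounded self-adjoint operator plus a nonlinearity of at most linear growth, and along it $\Ac_H$ is nondecreasing, strictly so away from critical points. One first normalizes so that $\Ac_H>0$ on $S^+$ — the standard positivity of the Ekeland--Hofer minimax geometry, obtained after a dilation of $X$ together with a Sobolev estimate on the term $\int_0^1 H(x(t))\,dt$ — and fixes a regular value $\eps>0$ of $\Ac_H$ smaller than the smallest positive critical value (we may assume $\eps<c$). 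Since every critical point of $\Ac_H$ of action $\le0$ is a constant loop and hence does not lie on $S^+$, and since there are no critical values in $(0,\eps)$, the positive gradient orbit of a point $y\in S^+$ with $0<\Ac_H(y)<\eps$ contains no critical point; a standard Palais--Smale/deformation argument then forces this orbit to meet the regular level $\{\Ac_H=\eps\}$ at a finite time $\sigma(y)$, which is $S^1$-invariant, depends continuously on $y$ (by the implicit function theorem, as $\eps$ is regular), and tends to $0$ as $\Ac_H(y)\to\eps$. Put $\phi(y)=y$ when $\eps\le\Ac_H(y)\le c$ and $\phi(y)=\Phi^{\sigma(y)}(y)$ when $\Ac_H(y)<\eps$; the two prescriptions agree on the overlap, so $\phi$ is continuous, $S^1$-equivariant, and satisfies $\Ac_H\circ\phi\in[\eps,c]$. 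Functoriality of $\op{ind}_{FR}$ then yields the desired inequality, since $\op{ind}_{EH}(\{\Ac_H\le c\})\le\op{ind}_{FR}(\{\Ac_H\le c\}\cap S^+)$.

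The main obstacle is the analytic input that makes $\phi$ legitimate: completeness in forward time of the positive gradient flow of the \emph{indefinite} functional $\Ac_H$, the Palais--Smale condition (both standard for $H$ quadratic at infinity with slope avoiding $\pi\Z$), the identification of all critical points of action $\le\eps$ with constant loops for an admissible $H$, and the normalization $\Ac_H>0$ on $S^+$; only with these in place is $\sigma$ well defined and continuous. It is worth recording why a cruder argument fails: every $h\in\Gamma$ is the identity on $\{\Ac_H\le0\}$ and, more to the point, elements of $\Gamma$ do not raise the action, so one cannot expect $h(S^+)\subseteq\{\Ac_H\ge\eps\}$ for any $h\in\Gamma$; working instead with an $S^1$-equivariant map into the annulus $\{\eps\le\Ac_H\le c\}$ built from the positive gradient flow — which is not an element of $\Gamma$ — is precisely what bypasses this rigidity.
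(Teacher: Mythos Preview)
Your approach is genuinely different from the paper's, and the overall strategy---bounding $\ind_{EH}$ from above by testing a single $h\in\Gamma$ and then using the monotonicity $\FR(A)\le\FR(B)$ under an $S^1$-equivariant map $A\to B$---is sound. However, there is a real gap at the step where you assert $\Ac_H>0$ on $S^+$.

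That positivity is simply false for admissible Hamiltonians of large slope. Take $x(t)=v\,e^{2\pi i t}\in S^+$ with $|v|=(2\pi)^{-1/2}$; then $\|x\|_{H^{1/2}}=1$ and $|x(t)|\equiv(2\pi)^{-1/2}$. If the star-shaped domain is contained in the ball of radius $(2\pi)^{-1/2}$ and $H(y)=a|y|^2$ there, one gets $\int_0^1 H(x)=a/(2\pi)$ and hence $\Ac_H(x)=\tfrac12-\tfrac{a}{2\pi}$, which is $\le 0$ as soon as $a\ge\pi$. The ``normalisation by dilation of $X$'' you invoke is not available here: the lemma is stated for a \emph{fixed} admissible $H$, both sides depend on $H$, and rescaling $X$ changes the slope. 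The classical Ekeland--Hofer positivity is on a \emph{small} sphere $S^+_\alpha$, and the scaling $x\mapsto\alpha x^++x^0+x^-$ taking $S^+$ to $S^+_\alpha$ does \emph{not} lie in $\Gamma$ (its $\gamma^+$ is a nonzero constant, violating the support condition on $\{\Ac_H\le 0\}$).

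This breaks your construction of $\phi$: for $y\in S^+$ with $\Ac_H(y)\le 0$, the forward orbit of the positive gradient flow may converge to a constant critical point of action in $(\Ac_H(y),0]$ rather than reach the level $\{\Ac_H=\eps\}$. Your Palais--Smale argument only excludes critical values in $(0,\eps)$; it says nothing about those in $(-\infty,0]$, and you give no reason why $S^+$ avoids the stable set (for the positive gradient flow) of the constants.

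By contrast, the paper never needs a map out of $S^+\cap\{\Ac_H\le c\}$. It splits $\{\Ac_H\le c\}\cap h(S^+)$ into the low-action part $\{\Ac_H<\eps\}\cap h(S^+)$ and the annulus $\Ac^{\eps,c}=\{\eps\le\Ac_H\le c\}$, applies subadditivity of the Fadell--Rabinowitz index, and then takes the infimum over $h\in\Gamma$. The low-action term is killed not by the identity but by the Ekeland--Hofer fact $\ind_{EH}(\{\Ac_H<\eps\})=0$, i.e.\ some $h\in\Gamma$ (built from a truncated \emph{negative} gradient flow, which does lie in $\Gamma$) makes $\{\Ac_H<\eps\}\cap h(S^+)$ have trivial index. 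That is exactly the piece of Ekeland--Hofer machinery that your argument tries to avoid and cannot.
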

\begin{lemma}\label{lem:sup}
	Let $H$ be an admissible Hamiltonian for $X$ of slope $L$. Then, for all $c>0$,
\[\op{ind}_{EH}\left(\{\Ac_H\le c\}\right) \geq \kappa_c(H).\]
\end{lemma}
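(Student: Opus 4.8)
The plan is to deduce Lemma~\ref{lem:sup} from the monotonicity of the Fadell--Rabinowitz index under $S^{1}$-equivariant maps, using Lemma~\ref{lem:equality} to rewrite $\kappa_{c}(H)=\op{ind}_{FR}(\{\eps\le\Ac_H\le c\})$. After this substitution the inequality to be proved becomes
\[
\op{ind}_{EH}\bigl(\{\Ac_H\le c\}\bigr)\ \ge\ \op{ind}_{FR}\bigl(\{\eps\le\Ac_H\le c\}\bigr),
\]
and, unwinding $\op{ind}_{EH}(\xi)=\min_{h\in\Gamma}\FR\bigl(\xi\cap h(S^{+})\bigr)$, it suffices to produce, for every $h\in\Gamma$, an $S^{1}$-equivariant map
\[
\{\eps\le\Ac_H\le c\}\ \longrightarrow\ \{\Ac_H\le c\}\cap h(S^{+}).
\]
Indeed, if $g\colon A\to B$ is $S^{1}$-equivariant then $g^{*}\colon H^{*}_{S^{1}}(B;\Q)\to H^{*}_{S^{1}}(A;\Q)$ is a ring map compatible with the classifying maps to $\CP^{\infty}$, so $g^{*}u^{j}=u^{j}$, and therefore $u^{j}\neq0$ in $H^{*}_{S^{1}}(A)$ forces $u^{j}\neq0$ in $H^{*}_{S^{1}}(B)$, i.e.\ $\FR(A)\le\FR(B)$; applying this and taking the infimum over $h\in\Gamma$ gives the lemma. (One may assume $c$ is a regular value of $\Ac_H$ with $c\ge\eps$, the remaining values of $c$ being handled by semicontinuity of both sides in $c$.)

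To construct the map I would combine the two tools already in play. First, fix $\eps>0$ smaller than the smallest action of a nonconstant $1$-periodic orbit of $H$ and small enough that $\Ac_H(x)\ge\eps$ forces $x^{+}\neq0$: this is legitimate since $\Ac_H(x)=\tfrac12\|x^{+}\|^{2}-\tfrac12\|x^{-}\|^{2}-\int_{0}^{1}H(t,x)\,dt$ and $H$ is $C^{2}$-small on $X$, so a positive lower bound on $\Ac_H$ gives one on $\|x^{+}\|$; this is the same $\eps$ used in Lemmas~\ref{lem:equality} and~\ref{lem:inf}. Next, use the negative $H^{1/2}$ (pseudo)gradient flow $\varphi_{t}$ of $\Ac_H$ --- the $S^{1}$-equivariant flow underlying the Morse complex $CM^{S^{1},c}(H)$ of Section~\ref{HMAH}, globally defined since $H$ has fixed slope --- to deform $\{\eps\le\Ac_H\le c\}$ within $\{\Ac_H\le c\}$, and then invoke the Ekeland--Hofer linking of $h(S^{+})$: because $h\in\Gamma$ has the prescribed form and equals the identity where $\Ac_H\le0$ or $\|x\|\ge\rho$, an $S^{1}$-equivariant Leray--Schauder degree (Borsuk--Ulam) argument as in \cite{EH,EH2} shows that a suitable ``$h$-radial'' retraction onto $h(S^{+})$ is well defined and $S^{1}$-equivariant on $\{x^{+}\neq0\}$, which by the choice of $\eps$ contains all of $\{\eps\le\Ac_H\le c\}$. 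Composing these steps (together with a controlled amount of the flow to reabsorb the jump in action) produces the required equivariant map $\{\eps\le\Ac_H\le c\}\to\{\Ac_H\le c\}\cap h(S^{+})$.

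The main obstacle is precisely the control of the action level in the projection step. Since $H^{1/2}(S^{1},\R^{2n})$ does not embed into $L^{\infty}$, the functional $\Ac_H$ is not bounded on $S^{+}$ --- a loop of small $H^{1/2}$-norm can still reach far into the region where $H$ is large --- so the ``$h$-radial'' retraction of a point of the band onto $h(S^{+})$ may overshoot the level $c$; arranging an $S^{1}$-equivariant cut-off-and-flow correction that returns the image into $\{\Ac_H\le c\}$ while keeping it on $h(S^{+})$ is the technical heart, and is exactly where the structure of the deformation class $\Gamma$ is meant to be used. A secondary point is that the $S^{1}$-action on $S^{+}$ is only almost free (finite isotropy on the higher Fourier modes), which forces working over $\Q$, where the Borel construction behaves as in the free case. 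Even the case $h=\op{id}$ is already nontrivial, amounting to the assertion that slicing $\{\Ac_H\le c\}$ by the sphere $S^{+}$ does not drop its Fadell--Rabinowitz index below $\op{ind}_{FR}(\{\eps\le\Ac_H\le c\})$. An alternative route, perhaps cleaner given Theorem~\ref{thm:iso}, is to bypass the retraction by realizing a Floer cycle computing $c_{k}^{CH}(H)$ as a Morse cycle in $CM^{S^{1},c}(H)$, taking its $S^{1}$-invariant geometric support as the competitor $\xi$ in Proposition~\ref{prop:eh}, and deriving $\op{ind}_{EH}(\xi)\ge k$ from the fact that the corresponding class is sent by $\delta U^{k-1}$ to $[X]\otimes[\op{pt}]$; the crux --- that this homological nonvanishing survives intersection with an arbitrary $h(S^{+})$ --- is again the Ekeland--Hofer linking.
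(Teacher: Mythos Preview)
Your reduction via Lemma~\ref{lem:equality} to the inequality
\[
\op{ind}_{EH}\bigl(\{\Ac_H\le c\}\bigr)\ \ge\ \op{ind}_{FR}\bigl(\{\eps\le\Ac_H\le c\}\bigr)
\]
is sound in principle, and the monotonicity $\FR(A)\le\FR(B)$ under an $S^1$-equivariant map $A\to B$ is correct. The genuine gap is precisely the one you flag yourself: you never construct the equivariant map $\{\eps\le\Ac_H\le c\}\to\{\Ac_H\le c\}\cap h(S^+)$. The ``$h$-radial'' retraction $x\mapsto$ (normalized $h$-component in $E^+$) does not respect the sublevel $\{\Ac_H\le c\}$ because $\Ac_H$ is unbounded on $S^+$ (as you note, $H^{1/2}\not\hookrightarrow L^\infty$), and the suggested fix---flowing back down by the pseudogradient---need not keep you on $h(S^+)$, since $h(S^+)$ is not invariant under the flow of $-\nabla\Ac_H$. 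Your alternative route (take the geometric support of a Morse cycle as the competitor $\xi$) runs into the same wall: you would still need to show $\FR\bigl(\xi\cap h(S^+)\bigr)\ge k$ for \emph{every} $h\in\Gamma$, and nothing about the cycle's homology class tells you this directly. So as written the proposal is a reformulation rather than a proof.

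The paper's argument sidesteps this difficulty completely. Rather than comparing sublevel sets via equivariant maps, it exploits the one situation where $\op{ind}_{EH}$ is computed exactly: by \cite[Proposition~1]{EH2}, for a finite-dimensional $S^1$-invariant $V\subset E^+$ one has $\op{ind}_{EH}(E^-\oplus E^0\oplus V)=\tfrac12\dim_\R V$. Given $\kappa=\kappa_c(H)$, the paper \emph{constructs} an explicit $\kappa$-complex-dimensional $V=E_\kappa^+$ coming from a small ellipsoid $B\subset X$, shows by a direct Fourier computation that $E_\kappa:=E^-\oplus E^0\oplus E_\kappa^+$ lies in a sublevel set for $H_B$, transfers this to a sublevel for $H$ using $H+M\ge H_B$, and then uses the truncated gradient flow (which lies in $\Gamma$, hence preserves $\op{ind}_{EH}$) to push down into $\{\Ac_H\le c\}$. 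Monotonicity of $\op{ind}_{EH}$ under inclusion then gives $\kappa=\op{ind}_{EH}(E_\kappa)\le\op{ind}_{EH}(\{\Ac_H\le c\})$. The key input you are missing is this explicit test space and the Ekeland--Hofer index formula for it; with that in hand no equivariant map into $h(S^+)$ is ever needed.
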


\subsection{Organization of the paper}
We shall start, in Section \ref{sec:proofslemma} by proving Proposition \ref{prop:eh}, Lemma \ref{lem:equality} (assuming Theorem \ref{thm:iso}), Lemma \ref{lem:inf} and Lemma \ref{lem:sup}.
The rest of the paper is devoted to the proof of Theorem \ref{thm:iso}.
In Section \ref{sec:floer} we recall the definition of $S^1$-equivariant symplectic homology.
Section \ref{sec:abbondandolo} is devoted to equivariant Morse theory of the action functional. We start by recalling the definition of Morse theory of the action functional then introduce the equivariant version.
The last section (Section \ref{sec:isomorphism}) is the proof of Theorem \ref{thm:iso}.

\subsection{Acknowledgments}
Both authors would like to thank Alberto Abbondandolo for the many enlightening discussions. We warmly thank Helmut Hofer and Alan Weinstein for all the historical context. We also thank Stefan Matijevic for sending us an early version of his paper \cite{Matijevic} about a related result.

	J.G. was partially supported by the ANR LabEx CIMI (grant ANR-11-LABX-0040) within the French State Programme ``Investissements d’Avenir'' and by the ANR COSY (ANR-21-CE40-0002) grant. This material is based upon work supported by the National Science Foundation
under Grant No. DMS-1926686

 V.G.B.R.\ was partially supported by NSF grant DMS-1926686, FAPERJ grant JCNE E-26/201.399/2021 and a Serrapilheira Institute grant.
 
V.G.B.R. and J.G. are grateful for the hospitality of the Institute for Advanced Study, where part of this work was completed.

\section{Proofs of Lemmas}\label{sec:proofslemma}
\subsection{Proposition \ref{prop:eh}}
Let $H$ be an admissible Hamiltonian for $X$. Remark that for all $c$, $\{\Ac_H\leq c\}$ is $S^1$-invariant and $\forall c\leq c'$, $\ind(\{\Ac_H\leq c\})\leq\ind(\{\Ac_H\leq c'\})$.

Note that for $c$ large enough (bigger than the slope of $H$), $\ind(\{\Ac_H\leq c\})$ stabilizes.
Indeed, for $c$ bigger than the limit slope of $H$, there are no critical values. The flow of $\nabla\Ac_H$ is in $\Gamma$ and pushes down everything.
\begin{proof}[Proof of Proposition \ref{prop:eh}]
It follows from the definition of $\ind(\{\Ac_H\leq L\})$ that $c_{H,k}^{EH}=\infty$ if $k>\ind(\{\Ac_H\leq L\})$. Now suppose that $k\leq\ind(\{\Ac_H\leq L\})$ and let $c>0$ such that that $\ind\{\Ac_H\leq c\}=k$. Since $\sup\Ac_H(\{\Ac_H\leq c\})=c$, we conclude that \[c_{k}^{EH}(H)\le \inf\big\{c | \ind\{\Ac_H\leq c\}\ge k\big\}.\]

Now let $\xi\subset\Hi$ such that $\ind(\xi)\ge k$ and $c_{k}^{EH}(H)=\sup\Ac_H(\xi)$. Then $\xi\subset\{\Ac_H\leq c_{k}^{EH}(H)\}$. So $k\leq \ind(\xi)\leq\ind\{\Ac_H\leq c_{k}^{EH}(H)\}$ and hence\[\inf\big\{c | \ind\{\Ac_H\leq c\}\ge k\big\}\le c_{k}^{EH}(H).\]
\end{proof}

\subsection{Lemma \ref{lem:equality}}
We give an alternative description of the Fadell-Rabinowitz index $\FR$. Let $U: H_*^{S^1}(X)\to H_*^{S^1}(X)$ be the standard $U$-map defined by $U(\gamma)=f^{\star}u\cap \gamma$.
\begin{proposition}
	Let $X$ be an $S^{1}$-space. Then
	\[
		\FR(X) =\max \{k\in\Z|U^k\neq 0\}.
	\]
\end{proposition}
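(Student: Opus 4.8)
The plan is to compare the two descriptions of the Fadell--Rabinowitz index: the original one via the classifying map $f\colon X\times_{S^1}ES^1\to BS^1=\CP^\infty$ and the cup-product characterization, namely $\FR(X)=\max\{k\mid f^*u^k\neq 0\}$, against the claimed cap-product characterization $\max\{k\mid U^k\neq 0\}$ where $U=f^*u\cap(-)$ acts on $H_*^{S^1}(X)$. The key observation is that for a fixed $k$, the statement ``$f^*u^k\neq 0$ in $H^*_{S^1}(X)$'' is equivalent to ``the operator $U^k=(f^*u)^k\cap(-)$ is nonzero on $H_*^{S^1}(X)$''. So the proof reduces to this equivalence, applied for each $k$, after which taking the maximum over $k$ gives the result.

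First I would set $\alpha=f^*u\in H^2_{S^1}(X)$, so $f^*u^k=\alpha^k\in H^{2k}_{S^1}(X)$ and $U^k=\alpha^k\cap(-)$. The direction that is immediate is: if $\alpha^k=0$ then $U^k=\alpha^k\cap(-)=0$, hence $\{k\mid U^k\neq 0\}\subset\{k\mid\alpha^k\neq 0\}$ and so $\max\{k\mid U^k\neq 0\}\le\FR(X)$. For the reverse inequality I need: if $\alpha^k\neq 0$ then $\alpha^k\cap(-)$ is a nonzero operator on $H_*^{S^1}(X)$. This is where one must be slightly careful, because cap product with a nonzero cohomology class can in principle vanish on all of homology if there is no homology in the right degree to pair with. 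The clean way around this is to use the fact that $X\times_{S^1}ES^1$ maps to $\CP^\infty$ and that the relevant cohomology of $X$ is, via the Leray--Hirsch/spectral sequence of the fibration $X\to X\times_{S^1}ES^1\to\CP^\infty$, a module over $H^*(\CP^\infty)=\Q[u]$; concretely, $\alpha^k\neq 0$ means there is a class $\beta\in H^m_{S^1}(X)$ (for instance $\beta=1$, or a suitable class detected by the module structure) and a homology class $\gamma\in H^{?}_{S^1}(X)$ with $\langle\alpha^k\cup\beta,\gamma\rangle\neq 0$; then $\langle\beta,\alpha^k\cap\gamma\rangle\neq 0$ by the adjunction between cup and cap product, so $\alpha^k\cap\gamma\neq 0$ and hence $U^k\neq 0$.

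The cleanest route, which I would actually write, is to invoke Poincar\'e--Lefschetz type duality or simply the universal-coefficient/adjunction identity $\langle a\cup b,\gamma\rangle=\langle b,a\cap\gamma\rangle$ together with the observation that for a finite-dimensional space $X$ (which a nice star-shaped domain's boundary, or $X$ itself, is, up to homotopy), $H^*_{S^1}(X)$ and $H^{S^1}_*(X)$ are ``dual enough'' that a nonzero cohomology class always caps nontrivially onto \emph{some} homology class: indeed if $\alpha^k\cap\gamma=0$ for all $\gamma$, then $\langle\alpha^k\cup b,\gamma\rangle=0$ for all $b$ and all $\gamma$, forcing $\alpha^k\cup b=0$ for all $b$, in particular $\alpha^k=\alpha^k\cup 1=0$. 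This contrapositive argument is in fact completely general and avoids any finiteness hypothesis, so I would present it that way. Thus $\{k\mid\alpha^k\neq 0\}=\{k\mid U^k\neq 0\}$, and taking maxima yields $\FR(X)=\max\{k\mid U^k\neq 0\}$.

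The main obstacle is purely bookkeeping around degenerate cases: when $X$ is empty (or equivariantly contractible in a way that kills all positive powers of $u$) the maximum is taken over an empty or unbounded set and one should fix the convention (the paper already uses $\FR\in\Z$, presumably with $\FR(\emptyset)=-\infty$ or similar, and $\FR(\mathrm{pt})=0$), and when $H^*_{S^1}(X)$ has $u$ acting with infinite order one must check the maximum is still the intended one — but these are consistent between the two definitions precisely because of the adjunction identity above, so no real difficulty arises. I expect the entire proof to be three or four lines once the adjunction $\langle a\cup b,\gamma\rangle=\langle b,a\cap\gamma\rangle$ is invoked.
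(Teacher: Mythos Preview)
Your proposal is correct and follows essentially the same approach as the paper. Both arguments hinge on the cap--cup adjunction $\langle a\cup b,\gamma\rangle=\langle b,a\cap\gamma\rangle$ and the observation that $f^*u^k\cup 1=f^*u^k$; the paper packages this by introducing the transpose ${}^\tau U$ on $\Hom(H_*^{S^1}(X),\Q)\cong H^*_{S^1}(X)$ and identifying it with $\cup f^*u$, whereas you invoke the adjunction directly, but the content is identical.
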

\begin{proof}
	We first remark that there is a map ${}^\tau U$ on the dual. We have $H^{*}_{S^1}(X)\simeq \op{Hom}\big(H_{*}^{S^1}(X);\Q\big)$.
	\[
		\xymatrix{
			\op{Hom}\big(H_{*}^{S^1}(X);\Q\big) & \op{Hom}\big(H_{*}^{S^1}(X);\Q\big)\ar[l]_{{}^\tau U} \ar[d]_\simeq\\
			H^{*}_{S^1}(X)\ar[u]_\simeq & H^{*}_{S^1}(X) \ar[l]_{\cup f^\star(u)}
		}
	\]
	This imply that
		\[
			{}^\tau U \simeq \cup f^\star(u)
		\]
	We then have, $U=0$ if and only if ${}^\tau U=0$.
	This is implied by the ``duality'' between $U$ and ${}^\tau U$; i.e. $\langle {}^\tau U a,b\rangle = \langle a, Ub\rangle$.
	
	Thus,
	\[
		\sup\{k | U^k\neq 0\} = \sup\{k | ({}^\tau U)^k\neq0\} = \sup\{k | f^\star u^k\neq0\}
	\]
	Indeed, to see the last equality, note first that $\geq$ is obvious and $\leq$ is because $f^\star u^k\cup1=f^\star u^k$.
\end{proof}
Then, Lemma \ref{lem:equality} comes from Theorem \ref{thm:iso} and that \cite{Abbondandolothesis} for the subsets $\{\eps\leq\Ac_H\le c\}$ Morse homology and ``regular'' homology agrees.
\[
	H^{*}_{S^1}(\{\eps\leq\Ac_H\le c\})\simeq HM^{*}_{S^1}(\{\eps\leq\Ac_H\le c\})
\]

\subsection{Lemma \ref{lem:inf}}
\begin{proof}[Proof of Lemma \ref{lem:inf}]
	Let $h\in\Gamma$ and write $\Ac^{\epsilon,c}:=\{\epsilon\le\Ac_H\le c\}$. It follows from the subadditivity of the Fadell--Rabinowitz index that
\[\begin{aligned}\text{ind}_{FR}((\{\Ac_H\le\epsilon\}\cup \Ac^{\epsilon,c})\cap h(S^+))&\le \text{ind}_{FR}((\{\Ac_H\le\epsilon\}\cup\Ac^{\epsilon,c})\cap h(S^+))\cup \Ac^{\epsilon,c})\\&=\text{ind}_{FR}((\{\Ac_H\le\epsilon\}\setminus \Ac^{\epsilon,c})\cap h(S^+))\cup \Ac^{\epsilon,c})\\ &\le \text{ind}_{FR}(\{\Ac_H<\epsilon\}\cap h(S^+))+\text{ind}_{FR}(\Ac^{\epsilon,c}).\end{aligned}\]
Taking the infimum over $h\in\Gamma$, we obtain
\[\op{ind}_{EH}(\{\Ac_H\le c\})\le \text{ind}_{FR}(\Ac^{\epsilon,c}).\]
\end{proof}

\subsection{Lemma \ref{lem:sup}}
This proof is in 4 steps. The key idea is to use \cite[Proposition 1]{EH2} which is the following:
\begin{prop}[\cite{EH2}]
	Let $V$ be a finite dimensional $S^1$-invariant subspace of $E^+$, Then
	\[
		\EH(E^-\oplus E^0\oplus V)=\frac{1}{2}\dim(V)
	\]
\end{prop}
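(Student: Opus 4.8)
The plan is to prove the two inequalities $\EH(E^-\oplus E^0\oplus V)\le\tfrac12\dim V$ and $\EH(E^-\oplus E^0\oplus V)\ge\tfrac12\dim V$; the first is elementary and the second carries all the content. Write $\xi:=E^-\oplus E^0\oplus V$, so that by definition $\EH(\xi)=\min_{h\in\Gamma}\FR(\xi\cap h(S^+))$. I will use the following standard properties of the Fadell--Rabinowitz index: it is monotone ($A\subseteq B\Rightarrow\FR(A)\le\FR(B)$) and invariant under $S^1$-equivariant homeomorphisms; it is continuous from above, meaning that a compact $S^1$-invariant set $A$ has an open $S^1$-invariant neighbourhood $\mathcal O_0$ with $\FR(\mathcal O_0)=\FR(A)$; one has $\FR(S(W'))=\tfrac12\dim W'$ for the unit sphere of a finite-dimensional $S^1$-invariant subspace $W'\subseteq E^+$, whose orbit space is a weighted projective space; and it satisfies the Borsuk--Ulam inequality $\FR(\Psi^{-1}(0))\ge\FR(X)-\tfrac12\dim W'$ for every $S^1$-space $X$ and every $S^1$-equivariant map $\Psi\colon X\to W'$ into a finite-dimensional $S^1$-representation $W'$ without trivial summand.

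For the upper bound, note that $\Id\in\Gamma$ (take $\gamma^+=\gamma^-\equiv0$ and $K\equiv0$). As $E^+$ is a complement of $E^-\oplus E^0$ in $E$, we have $\xi\cap S^+=V\cap S^+=S(V)$, so $\EH(\xi)\le\FR(S(V))=\tfrac12\dim V$. For the lower bound, fix $h\in\Gamma$; we must show $\FR(\xi\cap h(S^+))\ge\tfrac12\dim V$. Write $E^+=V\oplus W$ with both summands $S^1$-invariant, and let $P_W$ be the orthogonal projection $E\to W$. For $y\in S^+\subseteq E^+$ one has $y^0=y^-=0$, hence $h(y)=e^{\gamma^+(y)}y+K(y)$, and $h(y)\in\xi$ precisely when the $W$-component of $h(y)$ vanishes, i.e. when
\[
\Phi(y):=e^{\gamma^+(y)}P_W(y)+P_WK(y)=0 .
\]
The map $\Phi\colon S^+\to W$ is continuous and $S^1$-equivariant, $\Phi^{-1}(0)$ is $S^1$-invariant, and $h$ restricts to an $S^1$-equivariant homeomorphism from $\Phi^{-1}(0)$ onto $\xi\cap h(S^+)$, so $\FR(\xi\cap h(S^+))=\FR(\Phi^{-1}(0))$. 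Since $\gamma^+$ is bounded on the bounded set $S^+$, the factor $e^{\gamma^+}$ lies between two positive constants there; since $K$ sends bounded sets to precompact sets, a routine compactness argument shows that $\Phi^{-1}(0)$ is compact and that for each open $S^1$-invariant neighbourhood $\mathcal O\supseteq\Phi^{-1}(0)$ there is $\varepsilon>0$ with $\{y\in S^+:\|\Phi(y)\|<\varepsilon\}\subseteq\mathcal O$.

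To bound $\FR(\Phi^{-1}(0))$ from below, I would reduce to finite dimensions. Fix an increasing exhaustion $W_1\subseteq W_2\subseteq\cdots$ of $W$ by finite-dimensional $S^1$-invariant subspaces with dense union, let $P_{W_N}$ be the orthogonal projection onto $W_N$, and set
\[
\Phi_N:=P_{W_N}\circ\Phi|_{S(V\oplus W_N)}\colon S(V\oplus W_N)\longrightarrow W_N,\qquad \Phi_N(y)=e^{\gamma^+(y)}P_{W_N}(y)+P_{W_N}K(y).
\]
Each $\Phi_N$ is $S^1$-equivariant and $W_N\subseteq E^+$ has no trivial $S^1$-summand, so the Borsuk--Ulam inequality gives
\[
\FR(\Phi_N^{-1}(0))\ \ge\ \FR\big(S(V\oplus W_N)\big)-\tfrac12\dim W_N\ =\ \tfrac12\dim(V\oplus W_N)-\tfrac12\dim W_N\ =\ \tfrac12\dim V .
\]
On the other hand, for $y\in\Phi_N^{-1}(0)$ one has $P_W(y)=P_{W_N}(y)$, whence $\Phi(y)=(P_W-P_{W_N})K(y)$, and $\|(P_W-P_{W_N})K(y)\|\to0$ uniformly in $y\in S^+$ as $N\to\infty$ because $K(S^+)$ is precompact and $\bigcup_N W_N$ is dense in $W$. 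Thus, given an open $S^1$-invariant neighbourhood $\mathcal O\supseteq\Phi^{-1}(0)$, we have $\Phi_N^{-1}(0)\subseteq\mathcal O$ for all large $N$, so $\FR(\mathcal O)\ge\FR(\Phi_N^{-1}(0))\ge\tfrac12\dim V$ by monotonicity. Continuity from above now furnishes an $\mathcal O_0$ with $\FR(\Phi^{-1}(0))=\FR(\mathcal O_0)\ge\tfrac12\dim V$, hence $\FR(\xi\cap h(S^+))\ge\tfrac12\dim V$, and taking the minimum over $h\in\Gamma$ finishes the proof.

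The main obstacle is the lower bound, and inside it the passage from the finite-dimensional estimates on $\Phi_N^{-1}(0)$ to the genuinely infinite-dimensional set $\Phi^{-1}(0)$. This forces one to verify that $\Phi^{-1}(0)$ is compact and that the index is continuous from above along a neighbourhood basis of it, and to keep track that every map in the Galerkin scheme is honestly $S^1$-equivariant so that Borsuk--Ulam applies --- precisely the place where the defining properties of $\Gamma$ (boundedness of $\gamma^\pm$ on bounded sets, compactness of $K$) are essential. The finite-dimensional Borsuk--Ulam inequality for the Fadell--Rabinowitz index is classical and may be quoted.
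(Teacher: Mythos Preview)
The paper does not give its own proof of this proposition: it is quoted verbatim as \cite[Proposition~1]{EH2} and used as a black box in the proof of Lemma~\ref{lem:sup}. So there is no in-paper argument to compare against.

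Your proposal is essentially the original Ekeland--Hofer argument, and the strategy is sound: the upper bound via $h=\Id$, and the lower bound by rewriting $\xi\cap h(S^+)$ as the zero set of an $S^1$-equivariant map $\Phi\colon S^+\to W$, then approximating by finite-dimensional $\Phi_N$, applying the cohomological Borsuk--Ulam inequality to each $\Phi_N$, and passing to the limit via compactness of $K$ and continuity of the index. The verification that $\Phi^{-1}(0)$ is compact (using that $P_V(y)$ lives in a finite-dimensional ball and $P_W(y)=-e^{-\gamma^+(y)}P_WK(y)$ lies in a precompact set) and that $\Phi_N^{-1}(0)$ eventually sits in any neighbourhood of $\Phi^{-1}(0)$ is correct and is exactly where the structural hypotheses on $\Gamma$ enter.

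One point to double-check against the convention you are using: with the paper's definition $\FR(X)=\max\{k:f^*u^k\neq0\}$, the unit sphere $S(V)$ of a complex $m$-dimensional $S^1$-representation without trivial summand has $\FR(S(V))=m-1$, not $m$. The original Ekeland--Hofer papers use a shifted normalisation (so that their index of $S(V)$ equals $m=\tfrac12\dim_\R V$), and the statement of the proposition is written in that normalisation. Your argument goes through unchanged once you adopt a single consistent convention; just make sure the Borsuk--Ulam inequality you quote and the value you assign to $\FR(S(V\oplus W_N))$ are normalised the same way, otherwise both bounds will be off by the same constant and still match, but the displayed intermediate equalities will look wrong.
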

The proof of Lemma \ref{lem:sup} consists in finding a $\kappa_c(H)$-dimensional $S^1$-invariant subspace of $E^+$ which lies in $\{\Ac_H\le c\}$.
\subsubsection*{Step 1}
Fix $\kappa\in\mathbb{N}$ and write $\kappa=n (l-1) +p$, where $p,l\in\Z$ and $1\le p\le n$. Fix $a_1<\dots<a_n$, such that
\[a_1<\dots<a_n<2a_1<\dots<2 a_n<\dots<l a_p.\] Let $B=E(a_1,\dots,a_n)$ and
$a= l a_p$. Let 
\[E_{\kappa}^+=\left\{x=\sum_{k\ge 1} e^{2\pi i k t}x_k\in E^+\mid x_{l}\in \C^{p}\text{ and } x_j=0\text{ for }j>l\right\}.\]
Note that $\text{dim}_{\C} (E_\kappa^+)=n (l-1) +p=\kappa$. Now define $E_\kappa=E_\kappa^+\oplus E^0\oplus E^-$.

	The Hamiltonian $H_B$ can be taken to satisfy the following conditions:
	\[
		\begin{cases}
			0\leq H_{B}(x)<\epsilon, &\text{ for }x\in B\\
			H_B(x) = a\pi\left(\frac{|z_1|^2}{a_1}+\dots+\frac{|z_n|^2}{a_n}\right)-a,& \text{ otherwise}.
		\end{cases}
	\]
	Let $\widetilde{H}(z)=a\pi\left(\frac{|z_1|^2}{a_1}+\dots+\frac{|z_n|^2}{a_n}\right)-a$. So $\widetilde{H}\le H_B$. If $x=\sum_{k\in\Z}e^{2\pi i k t}x_k$, we write $x_k=(x_k^1,\dots,x_k^n)\in\C^n$. For $x\in E_\kappa$,
	\begin{align*}
		\Ac_{H_B}(x)	&= \frac{1}{2}\left(\|x^+\|^2_{H^{\frac{1}{2}}}-\|x^-\|^2_{H^{\frac{1}{2}}}\right)-\int H(x(t))\,dt\\
		&\leq  \frac{1}{2}\|x^+\|^2_{H^{\frac{1}{2}}}-\int_0^1 \widetilde{H}(x(t))\,dt\\
		&=\pi \sum_{k\ge 1}\sum_{j=1}^n k|x_k^j|^2-\left(a \pi \sum_{k\in\Z}\sum_{j=1}^n\frac{ |x_k^j|^2}{a_j}-a\right)\\
		&\leq \pi \sum_{k=1}^l\sum_{j=1}^n\frac{l a_p}{a_j}|x_k^j|^2-\left(a \pi \sum_{k=1}^l\sum_{j=1}^n\frac{ |x_k^j|^2}{a_j}\right)+a\\
		&= a.
	\end{align*}
So $E_\kappa\subset \{\Ac_{H_B}\le a\}$.

\subsubsection*{Step 2}
Let $\kappa$ be the natural number defined as
\[\kappa:=\max\left\{j\,|\,\exists \sigma\in FH^{S^1,+}_{2j+n-1}(H)\text{ such that }U^j\sigma\neq 0\right\}(=\kappa_L).\]
Note that $\Ac_H(x)<L$ for all generators $x$ of $CF^{S^1,+}(H)$. So $c_\kappa(H)<L$. Now choose $a_1<\dots<a_n$ such that $B=E(a_1,\dots,a_n)\subset X$ and $a_n-a_1$ is very small as in Step 1. Now let $a\in\R$ be defined as in Step 1. Note that $c_\kappa(B)=a$. Therefore
\begin{equation}\label{eq:al}a=c_\kappa (B)\le c_\kappa(X)<L.\end{equation}
Therefore \[E_\kappa\subset \{A_{H_B}<L\}.\]
\subsubsection*{Step 3}
It follows from \eqref{eq:al} that there exists a constant $M$ such that $H+M> H_{B}$. So for every $x\in E$, 
\[
	\Ac_H(x)-M=\Ac_{H+M}(x)\leq\Ac_{H_{B}}(x)
\]
Therefore \[E_\kappa\subset \{\Ac_{H}<L+M\}.\]

Since $\Ac_H$ does not have critical values in $[L,L+M]$, we can find $h\in\Gamma$ which is a truncated flow of $-\nabla\Ac_H$ such that \[h\left(\{\Ac_{H}<L+M\}\right)\subset \{\Ac_H\le L\}.\]
Therefore
\[\kappa=\op{ind}_{EH}\left(h(E_\kappa)\right)\le\op{ind}_{EH}\left( h\left(\{\Ac_{H}<L+M\}\right)\right)\le \op{ind}_{EH}\left( \{\Ac_H\le L\}\right).\]

\subsubsection*{Step 4}
For each $c\le L$ which is not a critical value of $\Ac_H$, let \[\kappa_c:= \max\{j\mid \exists\,\sigma\in\op{Im}\left(FH^{S^1,+,c}_{2j+n-1}(H)\to FH^{S^1,+}(H)\right)\text{ such that }U^j\sigma\neq 0\}.\]
Then

\[\op{ind}_{EH}\left(\Ac_H\le c\}\right)\geq \kappa_c.\]
\begin{proof}
	We pick an admissible Hamiltonian for our star-shaped domain, $\widetilde{H}$ which coincides with $H$ ``up to slope'' $c$ and has slope $c+\delta$ for $\delta$ very small. In particular, $\widetilde{H}\le H$. Then, we have
 \[
 	\{\Ac_{\widetilde{H}}\leq c\}\subset\{\Ac_{{H}}\leq c\}
 \]
Since the $U$ map commutes with the continuation map, $\kappa_c$ is the $\kappa$ defined in Step 2 for $\widetilde{H}$. So it follows from Step 3 that
\[\kappa_c\le \op{ind}_{EH}\left(\{\Ac_{\widetilde{H}}\leq c\}\right)\le \op{ind}_{EH}\left(\{\Ac_{{H}}\leq c\}\right).\]
\end{proof}

\section{Equivariant symplectic homology}\label{sec:floer}
(Positive) symplectic homology was developed by Viterbo \cite{V}, using works of Cieliebak, Floer, and Hofer \cite{FH,CFH}. The $S^1$-equivariant version of (positive) symplectic homology was originally defined by Viterbo \cite{V}, and an alternate definition using family Floer homology was given by Bourgeois-Oancea \cite[\S2.2]{bo}, following a suggestion of Seidel \cite{Seidel}. We will use the family Floer homology definition here which is often amenable to computations. We follow the treatment in \cite{gutt, GuH}, with some minor tweaks which do not affect the results as well as some generalisations from \cite{Groman23, O}.

(Positive) symplectic homology is usually defined for Liouville domains but we shall restrict our attention here to star-shaped domains in $\R^{2n}$.
Let $X$ be a star-shaped domains in $\R^{2n}$, so that $X$ is a compact smooth manifold with boundary transverse to the radial vector field and $\lambda_0=\frac{1}{2}\sum_{i=1}^{n}x_idy_i-y_idx_i$ has the properties that $d\lambda_0$ is non-degenerate and that $\lambda_0\big|_{\partial X}$ is a contact form.  We  say that $X$ is \emph{non-degenerate} if the linearized return map of the Reeb flow at each closed Reeb orbit on $\partial X$, acting on the contact hyperplane $\ker\lambda$, does not have $1$ as an eigenvalue.

 In this situation, for each $L\in \R$ we have an $L$-filtered positive  $S^1$-equivariant symplectic homology, $SH^{S^1,+,L}(X)$, which will be defined properly in \S\ref{sec:SHS1+}. To simplify notation, we often denote $SH^{S^1,+,L}(X)$ by $CH^{L}(X)$ below\footnote{The reason for this notation is that positive $S^1$-equivariant symplectic homology can be regarded as a substitute for linearized contact homology, which can be defined without transversality difficulties \cite[\S3.2]{bo}.}. These are $\mathbb{Q}$-vector\footnote{It is also possible to define positive $S^1$-equivariant symplectic homology with integer coefficients. However the torsion in the latter is not relevant to the applications explained here, and it will simplify our discussion to discard it.} spaces that come equipped with maps $\imath_{L_1,L_2}\co CH^{L_1}(X)\to CH^{L_2}(X)$ for $L_1\leq L_2$ such that $\imath_{L,L}$ is the identity and $\imath_{L_2,L_3}\circ \imath_{L_1,L_2}=\imath_{L_1,L_3}$.\footnote{Warning: In \cite{GuH} the map that we denote by $\imath_{L_1,L_2}$ is denoted by $\imath_{L_2,L_1}$.} The $CH^L(X)$ are $\Z$-graded.
The (unfiltered) positive $S^1$-equivariant symplectic homology of $(X)$ is $CH(X)=\varinjlim_L CH^L(X)$ where the direct limit is constructed using the maps $\imath_{L_1,L_2}$.

\subsection{Symplectic homology}\label{sec:symplectichomology}

%
Consider a $1$-periodic Hamiltonian on $\R^{2n}$, i.e.\ a smooth function
\[
H:S^1\times\R^{2n}\longrightarrow\R
\]
where $S^1=\R/\Z$. Such a function $H$ determines a vector field $X_H^\theta$ on $\R^{2n}$ for each $\theta\in S^1$, defined by ${\omega_0}(X_H^\theta,\cdot) = dH(\theta,\cdot)$. Let $\Per(H)$ denote the set of $1$-periodic orbits of $X_H$, i.e.\ smooth maps $\gamma:S^1\to\R^{2n}$ satisfying the equation $\gamma'(\theta) = X_H^\theta\big(\gamma(\theta)\big)$.

We now fix our non-degenerate star-shaped domain $X$ with smooth boundary $Y$.
Recall that the completion $(\widehat{X},\widehat{\lambda})$ of $(X,\lambda_0)$ is defined by
\[
\widehat{X}:=X\cup\bigl([0,\infty)\times Y\bigr) \quad\textrm{and}\quad\widehat{\lambda_0}:=
\begin{cases}
	\lambda_0 &\textrm{on } X,\\
	e^\rho \lambda_0\big|_Y &\textrm{on }[0,\infty)\times Y
\end{cases}
\]
where $\rho$ denotes the $[0,\infty)$ coordinate. We can, of course, identify here $(\widehat{X}, \widehat{\lambda_0})$ with $(\R^{2n},\lambda_0)$.
\begin{definition}
\label{def:Hstd}
An \textbf{ admissible Hamiltonian for $X$\/} is a smooth function $H : S^{1}\times\R^{2n} \rightarrow \R$ satisfying the following conditions:
\begin{description}
\item[(1)]
The restriction of $H$ to $S^1\times X$ is negative, autonomous (i.e.\ $S^1$-independent), and $C^2$-small (so that there are no non-constant 1-periodic orbits). Furthermore,
\begin{equation}
\label{eqn:HSpec}
H > -\epsilon
\end{equation}
on $S^1\times X$.
\item[(2)]
There exists $\rho_{1} \geq 0$ such that on $S^1 \times [\rho_1,\infty]\times Y$ we have
\begin{equation}
\label{eqn:limitingslope}
H(\theta,x) = \beta \|x\|^2 + \beta'
\end{equation}
with $0<\beta\notin\Spec(Y,\lambda)\cup2\pi\Z$  and $\beta'\in\R$. The constant $\beta$ is called the \textbf{ limiting slope\/} of $H$.
\item[(3)]
	There exists $0\leq\rho_{0} \leq \rho_1$ such that on $S^1 \times [\rho_0,\rho_1-\eps]\times Y$ we have
\begin{equation}
\label{eqn:limitingslope2}
H(\theta,\rho,y) = \beta e^\rho + \beta''
\end{equation}
with $\beta''\in\R$. 
\item[(4)]
There exists a small, strictly convex, increasing function $h:[1,e^{\rho_0}]\to\R$ such that on $S^1\times[0,\rho_0]\times Y$, the function $H$ is $C^{2}$-close to the function sending $(\theta,\rho,x)\mapsto h(e^\rho)$. The precise sense of ``small'' and ``close'' that we need here is explained in Remarks~\ref{rem:orbitsofHstand} and \ref{rem:hsmall}.
\item[(5)]
The Hamiltonian $H$ is nondegenerate, i.e.\ all $1$-periodic orbits of $X_{H}$ are nondegenerate.
\end{description}
We denote the set of admissible Hamiltonians by $\Hstd$.
\end{definition}

\begin{remark}
\label{rem:orbitsofHstand}
Condition (1) implies that the only $1$-periodic orbits of $X_H$ in $X$ are constants; they correspond to critical points of $H$.

The significance of condition (2) and (3) is as follows. On $S^{1}\times [0,\infty)\times Y$, for a Hamiltonian of the form $H_1(\theta,\rho,y)=h_1(e^{\rho})$, we have
\[
X_{H_1}^{\theta}(\rho,y) = -h'_1(e^{\rho})R_{\lambda}(y).
\]
Hence for such a Hamitonian $H_1$ with $h_1$ increasing, a $1$-periodic orbit of $X_{H_1}$ maps to a level $\{\rho\}\times Y$, and the image of its projection to $Y$ is the image of a (not necessarily simple) periodic Reeb orbit of period $h'_1(e^{\rho})$. In particular, condition (2) and (3) implies that there is no $1$-periodic orbit of $X_H$ in $[\rho_0,\rho_1-\eps) \times Y\cup[\rho_1,\infty)\times Y$.

Condition (4) ensures that for any non-constant $1$-periodic orbit $\gamma_H$ of $X_H$, there exists a (not necessarily simple) periodic Reeb orbit $\gamma$ of period $T<\beta$ such that the image of $\gamma_H$ is close to the image of $\gamma$ in $\{ \rho \}\times  Y$ where  $T=h'(e^{\rho})$.
\end{remark}

\begin{definition}
\label{def:admJ}
An $S^1$-family of almost complex structures $J:S^1\to\End(T\widehat{X})$ is \textbf{ admissible\/} if it satisfies the following conditions:
\begin{itemize}
\item
$J^\theta$ is ${\omega_0}$-compatible for each $\theta\in S^1$.
\item
There exists $\rho_2\ge 0$ such that on $[\rho_2,\infty0\times Y$, the almost complex structure $J^\theta$ does not depend on $\theta$, is invariant under translation of $\rho$, sends $\xi$ to itself compatibly with $d\lambda$, and satisfies
\begin{equation}
\label{eqn:JReeb}
J^\theta (\partial_{\rho})=R_{\lambda}.
\end{equation}
\end{itemize}
We denote the set of all admissible $J$ by $\mathcal{J}$.
\end{definition}

Given $J\in\mathcal{J}$, and $\gamma_-,\gamma_+\in\Per(H)$, let $\widehat{\M}(\gamma_-,\gamma_+;J)$ denote the set of maps
\[
u: \R\times S^1\longrightarrow \widehat{X}
\]
satisfying Floer's equation
\begin{equation}
\label{eqn:Floer}
\frac{\partial u}{\partial s}(s,\theta) + J^{\theta}\bigl(u(s,\theta)\bigr)\biggl(\frac{\partial u}{\partial \theta}(s,\theta) - X_H^{\theta}\bigl(u(s,\theta)\bigr)\biggr)=0
\end{equation}
as well as the asymptotic conditions
\[
\lim_{s\to\pm\infty}u(s,\cdot) = \gamma_\pm.
\]
If $J$ is generic and $u\in\widehat{\M}(\gamma_-,\gamma_+;J)$, then $\widehat{\M}(\gamma_-,\gamma_+;J)$ is a manifold near $u$ whose dimension is the Fredholm index of $u$ defined by
\[
\op{ind}(u) = \op{CZ}_\tau(\gamma_+) - \op{CZ}_\tau(\gamma_-).
\]
Here $\op{CZ}_\tau$ denotes the Conley-Zehnder index computed using trivializations $\tau$ of $\gamma_\pm^{\star}T\widehat{X}$ that extend to a trivialization of $u^{\star}T\widehat{X}$. Note that $\R$ acts on $\widehat{\M}(\gamma_-,\gamma_+;J)$ by translation of the domain; we denote the quotient by $\M(\gamma_-,\gamma_+;J)$.

\begin{definition}	
Let $H\in\Hstd$, and let $J\in\mathcal{J}$ be generic.
Define the Floer chain complex $(CF(H,J),\partial)$ as follows. The chain module $CF(H,J)$ is the free $\Q$-module\footnote{It is also possible to use $\Z$ coefficients here, but we will use $\Q$ coefficients in order to later establish the Reeb Orbits property in Proposition~\ref{prop:ch}, which leads to the Reeb Orbits property of the capacities $c_k$. In special cases when the Conley-Zehnder index of a $1$-periodic orbit is unambiguously defined, for example when all $1$-periodic orbits are contractible and $c_1(TX)|_{\pi_2(X)}=0$, the chain complex is graded by minus the Conley-Zehnder index.} generated by the set of $1$-periodic orbits $\Per(H)$. If $\gamma_-,\gamma_+\in\Per(H)$, then the coefficient of $\gamma_+$ in $\partial\gamma_-$ is obtained by counting Fredholm index $1$ points in $\M(\gamma_-,\gamma_+;J)$ with signs determined by a system of coherent orientations as in \cite{FH2}. (The chain complexes for different choices of coherent orientations are canonically isomorphic.)
\end{definition}
	
Let $HF(H,J)$ denote the homology of the chain complex $(CF(H,J),\partial)$. Given $H$, the homologies for different choices of generic $J$ are canonically isomorphic to each other, so we can denote this homology simply by $HF(H)$. 

The construction of the above canonical isomorphisms is a special case of the following more general construction. Given two admissible Hamiltonians $H_1,H_2\in\Hstd$, write $H_1\le H_2$ if $H_1(\theta,x)\le H_2(\theta,x)$ for all $(\theta,x)\in S^1\times\widehat{X}$. In this situation, one defines a \emph{continuation morphism} $HF(H_1)\to HF(H_2)$ as follows; cf.\ \cite[Thm.\ 4.5]{gutt} and the references therein. Choose generic $J_1,J_2\in\mathcal{J}$ so that the chain complexes $CF(H_i,J_i)$ are defined for $i=1,2$. Choose a generic homotopy $\{(H_s,J_s)\}_{s\in\R}$ such that $H_s$ satisfies equation \eqref{eqn:limitingslope} for some $\beta,\beta'$ depending on $s$; $J_s\in\mathcal{J}$ for each $s\in\R$; $\partial_sH_s\ge 0$; $(H_s,J_s)=(H_1,J_1)$ for $s<<0$; and $(H_s,J_s)=(H_2,J_2)$ for $s>>0$. One then defines a chain map $CF(H_1,J_1)\to CF(H_2,J_2)$ as a signed count of Fredholm index $0$ maps 
$u:\R\times S^1\rightarrow \widehat{X}$ satisfying the equation
\begin{equation}
\label{eq:floerparam}
		\frac{\partial u}{\partial s} + J_s^{\theta}\circ u\Bigl(\frac{\partial u}{\partial \theta} - X^\theta_{H_s}\circ u\Bigr)=0
\end{equation}
and the asymptotic conditions $\lim_{s\to-\infty}u(s,\cdot) = \gamma_1$ and $\lim_{s\to\infty}u(s,\cdot) = \gamma_2$. The induced map on homology gives a well-defined map $HF(H_1)\to HF(H_2)$. If $H_2\le H_3$, then the continuation map $HF(H_1)\to HF(H_3)$ is the composition of the continuation maps $HF(H_1)\to HF(H_2)$ and $HF(H_2)\to HF(H_3)$. 

\begin{definition}
We define the \emph{symplectic homology} of $(X,\lambda)$ to be the direct limit 
\[
SH(X,\lambda) := \lim_{\substack{\longrightarrow \\ H\in \mathcal{H}_{\textrm{adm}}}}HF(H)
\]
with respect to the partial order $\le$ and continuation maps defined above.
\end{definition}

\subsection{Positive symplectic homology}
\label{sec:psh}

Positive symplectic homology is a modification of symplectic homology in which constant $1$-periodic orbits are discarded.

To explain this, let $H:S^1\times\R^{2n}\rightarrow \R$ be a Hamiltonian in $\Hstd$. The \emph{Hamiltonian action} functional $\mathcal{A}_{H}: C^{\infty}(S^{1},\R^{2n})\rightarrow \R$ is defined by
\[
	\mathcal{A}_{H}(\gamma) := -\int_{S^{1}}\gamma^{\star}{{\lambda_0}} - \int_{S^{1}}H\bigl(\theta,\gamma(\theta)\bigr)d\theta.
\]
If $J\in\mathcal{J}$, then the differential on the chain complex $(CF(H,J),\partial)$ decreases the Hamiltonian action $\mathcal{A}_H$. As a result, for any $L\in\R$, we have a subcomplex $CF^{\le L}(H,J)$ of $CF(H,J)$, generated by the $1$-periodic orbits with Hamiltonian action less than or equal to $L$. 

To see what this subcomplex can look like, note that the $1$-periodic orbits of $H \in \Hstd$ fall into three classes: (i) constant orbits corresponding to critical points in $X$,  (ii) non-constant orbits contained in $[0,\rho_0]\times Y$, and (iii) non-constant orbits contained in $[\rho_1-\eps,\rho_1]\times Y$.

If $x$ is a critical point of $H$ on $X$, then the action of the corresponding constant orbit is equal to $-H(x)$. By \eqref{eqn:HSpec}, this is less than $\epsilon$.

By Remark~\ref{rem:orbitsofHstand}, a non-constant $1$-periodic orbit of $X_H$ is close to a $1$-periodic orbit of $-h'(e^\rho)R_{\lambda}$ located in $\{ \rho \}\times Y$ for  $\rho\in[0,\rho_0]$ with $h'(e^\rho)\in\Spec(Y,\lambda)$. The Hamiltonian action of the latter loop is given by 
\begin{equation}
\label{eqn:actionsclose}
-\int_{S^1}e^\rho\lambda(-h'(e^\rho)R_{\lambda})d\theta -\int_{S^1}h(e^\rho)d\theta=e^\rho h'(e^\rho)-h(e^\rho).
\end{equation}
Since $h$ is strictly convex, the right hand side is a strictly increasing function of $\rho$.

Note also, by \cite{V} , and taking $\rho_1$ sufficiently large, the orbits in (iii) have negative action.
\begin{remark}
\label{rem:hsmall}
In Definition~\ref{def:Hstd}, we assume that $h$ is sufficiently small so that the right hand side of \eqref{eqn:actionsclose} is close to the period $h'(e^\rho)$, and in particular greater than $\epsilon$. We also assume that $H$ is sufficiently close to $h(e^\rho)$ on $S^1\times[0,\rho_0]\times Y$ so that the Hamiltonian actions of the $1$-periodic orbits are well approximated by the right hand side of \eqref{eqn:actionsclose}, so that:
\begin{description}
\item[(i)] The Hamiltonian action of every 1-periodic orbit of $X_H$ corresponding to a critical point on $X$ is less than $\epsilon$; and the Hamiltonian action of every other $1$-periodic orbit is greater than $\epsilon$.
\item[(ii)]
If $\gamma$ is a Reeb orbit of period $T<\beta$, and if $\gamma'$ is a $1$-periodic orbit of $X_H$ in $[0,\rho_0]\times Y$ associated to $\gamma$, then
\[
|\mc{A}_H(\gamma') - T | < \min\left\{\beta^{-1},\tfrac{1}{3}\op{gap}(\beta)\right\}.
\]
Here $\op{gap}(\beta)$ denotes the minimum difference between two elements of $\Spec(Y,\lambda)$ that are less than $\beta$. 
\end{description}
\end{remark}

We can now define positive symplectic homology.

\begin{definition}
\label{def:psh}
	Let $(X,\lambda)$ be a Liouville domain, let $H$ be a Hamiltonian in $\Hstd$, and let $J\in\mathcal{J}$.

Consider the quotient complex
\[
CF^+(H,J) \eqdef \frac{CF(H,J)}{CF^{\le\epsilon}(H,J)}.
\]
The homology of the quotient complex is independent of $J$, so we can denote this homology by $HF^+(H)$. More generally, if $H_1\le H_2$, then the chain map used to define the continuation map $HF(H_1)\to HF(H_2)$ descends to the quotient, since the Hamiltonian action is nonincreasing along a solution of \eqref{eq:floerparam} when the homotopy is nondecreasing. Thus we obtain a well-defined continuation map $HF^+(H_1)\to HF^+(H_2)$ satisfying the same properties as before.

We now define the \emph{positive symplectic homology} of $(X,\lambda)$ to be the direct limit
\[
SH^{+}(X,\lambda) := \lim_{{\substack{\longrightarrow\\ {H\in \Hstd}}}} HF^{+}(H).
\]
\end{definition}

\subsection{$S^1$-equivariant symplectic homology}
\label{SHS1}

Let $(X,\lambda_0)$ be a star-shaped domain in $\R^{2n}$ with boundary $Y$.
We now review how to define the $S^1$-equivariant symplectic homology $SH^{S^1}(X,\lambda)$.

The $S^1$-equivariant symplectic homology $SH^{S^1}(X,\lambda_0)$ is defined as a limit as $N\to\infty$ of homologies $SH^{S^1,N}(X,\lambda_0)$, where $N$ is a nonnegative integer. To define the latter, fix the perfect Morse function $f_N:\C P^N\to\R$ defined by
\[
f_N\bigl([w^0:\ldots:w^n]\bigr)=\frac{\sum_{j=0}^Nj|w^j|^2}{\sum_{j=0}^N|w^j|^2}.
\]
Let $\widetilde{f}_N:S^{2N+1}\to\R$ denote the pullback of $f_N$ to $S^{2N+1}$. We will consider gradient flow lines of $\widetilde{f_N}$ and $f_N$ with respect to the standard metric on $S^{2N+1}$ and the metric that this induces on $\C P^N$.

\begin{remark}\label{rmk:periodicity}
The family of functions $f_N$ has the following two properties which are needed below. We have two isometric inclusions $i_0,i_1:\C P^N\to\C P^{N+1}$ defined by $i_0([z_0:\ldots:z_{N}]) = [z_0:\ldots:z_{N}:0]$ and $i_1([z_0:\ldots:z_{N}])=[0:z_0:\ldots:z_{N}]$. Then:
\begin{description}
\item[(1)]
The images of $i_0$ and $i_1$ are invariant under the gradient flow of $f_{N+1}$.
\item[(2)]
We have $f_N=f_{N+1}\circ i_0=f_{N+1}\circ i_1 + \op{constant}$, so that the gradient flow of $f_{N+1}$ pulls back via $i_0$ or $i_1$ to the gradient flow of $f_N$.
\end{description}
\end{remark}

Now choose a ``parametrized Hamiltonian''
\begin{equation}
\label{eqn:Hamparam}
H:S^1\times \R^{2n}\times S^{2N+1}\longrightarrow \R
\end{equation}
which is $S^1$-invariant in the sense that
\[ 
H(\theta+\varphi,x,\varphi z) = H(\theta,x,z)\qquad \forall \theta,\varphi\in S^1=\R/\Z,\; x\in\R^{2n},\; z\in S^{2N+1}.
\]
Here the action of $S^1=\R/\Z$ on $S^{2N+1}\subset \C^{N+1}$ is defined by $\varphi\cdot z=e^{2\pi i\varphi}z$.

\begin{definition}
\label{def:aph}
A parametrized Hamiltonian $H$ as above is \emph{admissible} if:
\begin{description}
\item[(i)]
For each $z\in S^{2N+1}$, the Hamiltonian 
\[
H_z = H(\cdot,\cdot,z):S^1\times\widehat{X}\longrightarrow\R
\]
satisfies conditions (1), (2), (3), and (4) in Definition~\ref{def:Hstd}, with $\beta$ and $\beta'$ independent of $z$.
\item[(ii)]
If $z$ is a critical point of $\widetilde{f}_N$, then the $1$-periodic orbits of $H_z$ are nondegenerate.
\item[(iii)] $H$ is nondecreasing along downward gradient flow lines of $\widetilde{f}_N$.
\end{description}
\end{definition}

Let $\Per^{S^1}(H, \tilde{f}_N)$ denote the set of pairs $(\gamma,z)$, where $z\in S^{2N+1}$ is a critical point of $\tilde{f}_N$, and $\gamma$ is a $1$-periodic orbit of the Hamitonian $H_z$.
Note that $S^1$ acts freely on the set $\Per^{S^1}(H, \tilde{f}_N)$ by
\[
\varphi\cdot (\gamma,z) = \big(\gamma(\cdot - \varphi), \varphi\cdot z\big).
\]
If $p=(\gamma,z)\in\Per^{S^1}(H,\tilde{f}_N)$, let $S_p$ denote the orbit of $(\gamma,z)$ under this $S^1$ action.

Next, choose a generic map
\begin{equation}
\label{eqn:Jparam}
J:S^1\times S^{2N+1} \to \mc{J}, \quad (\theta,z) \mapsto J^\theta_z,
\end{equation}
which is $S^1$-invariant in the sense that
\[
J^{\theta+\varphi}_{\varphi\cdot z} = J^\theta_z
\]
for all $\varphi,\theta\in S^1$ and $z\in S^{2N+1}$. 

Let $p^-=(\gamma^-,z^-)$ and $p^+=(\gamma^+,z^+)$ be distinct elements of $\Per^{S^1}(H,\tilde{f}_N)$. Define $\widehat{\Mod}(S_{p^-},S_{p^+};J)$
to be the set of pairs $(u,\eta)$, where $\eta:\R\to S^{2N+1}$ and $u:\R\times S^1\to\widehat{X}$, satisfying the following equations:
\begin{equation}
\label{eq:fleorparam}
\left\{
\begin{aligned}
\partial_su+J^{\theta}_{\eta(s)}\circ u\bigl(\partial_{\theta}u-X_{H_{\eta(s)}^\theta}\circ u\bigr) &=0,\\
\dot{\eta}+\vec{\nabla}\tilde{f}_N(\eta) &=0,\\
\lim_{s\rightarrow\pm\infty}\bigl(u(s,\cdot),\eta(s)\bigr) &\in S_{p^\pm}.
\end{aligned}
\right.
\end{equation}
Here the middle equation is a modification of Floer's equation \eqref{eqn:Floer} which is ``parametrized by $\eta$''. Note that $\R$ acts on the set $\widehat{\Mod}(S_{p^-},S_{p^+};J)$ by reparametrization: if $\sigma\in\R$, then
\[
\sigma\cdot(u,\eta) = \big(u(\cdot-\sigma,\cdot),\eta(\cdot-\sigma)\big).
\]
In addition, $S^1$ acts on the set $\widehat{\Mod}(S_{p^-}, S_{p^+};J)$ as follows: if $\tau\in S^1$, then
\[
\tau\cdot (u,\eta):=\bigl(u(\cdot,\cdot-\tau),\tau\cdot\eta\bigr).
\]
Let $\Mod^{S^1}(S_{p^-},S_{p^+};J)$ denote the quotient of the set $\widehat{\Mod}(S_{p^-},S_{p^+};J)$ by these actions of $\R$ and $S^1$.

If $J$ is generic, then $\Mod^{S^1}(S_{p^-},S_{p^+};J)$ is a manifold near $(u,\eta)$ of dimension
\[
\op{ind}(u,\eta) = (\op{ind}(f_N,z^-) - \op{CZ}_\tau(\gamma^-)) - (\op{ind}(f_N,z^+) - \op{CZ}_\tau(\gamma^+)) - 1.
\]
Here $\op{ind}(f_N,z^\pm)$ denotes the Morse index of the critical point $z^\pm$ of $f_N$, and $\op{CZ}_\tau$ denotes the Conley-Zehnder index with respect to a trivialization $\tau$ of ${(\gamma^\pm)}^{\star}T\widehat{X}$ that extends over $u^{\star}T\widehat{X}$.

\begin{definition}
\label{def:complex}
\cite[\S2.2]{bo}
Define a chain complex $\left(CF^{S^1,N}(H,J),\partial^{S^1}\right)$ as follows. The chain module $CF^{S^1,N}(H,J)$ is the free $\Q$ module\footnote{It is also possible to define $SH^{S^1,+}$, using $\Z$ coefficients, as with $SH$.} generated by the orbits $S_p$.
If $S_{p^-},S_{p^+}$ are two such orbits, then the coefficient of $S_{p^+}$ in $\partial^{S^1}S_{p^-}$ is a signed count of elements $(u,\eta)$ of $\Mod^{S^1}(S_{p^-},S_{p^+};J)$ with $\op{ind}(u,\eta)=1$.
\end{definition}

We denote the homology of this chain complex by $HF^{S^1,N}(H)$. This does not depend on the choice of $J$, by the usual continuation argument; one defines continuation chain maps using a modification of \eqref{eq:fleorparam} in which the second line is replaced by an ``$\eta$-parametrized'' version of Floer's continuation equation \eqref{eq:floerparam}.

We now define a partial order on the set of pairs $(N,H)$, where $N$ is a nonnegative integer and $H$ is an admissible parametrized Hamiltonian \eqref{eqn:Hamparam}, as follows. Let $\widetilde{i}_0:S^{2N+1}\to S^{2N+3}$ denote the inclusion sending $z\mapsto (z,0)$. (This lifts the inclusion $i_0$ defined in Remark~\ref{rmk:periodicity}.) Then $(N_1,H_1)\le (N_2,H_2)$ if and only if:
\begin{itemize}
\item
$N_1\le N_2$, and
\item
$H_1 \le (\widetilde{i_0}^{\star})^{N_2-N_1}H_2$ pointwise on $S^1\times\R^{2n}\times S^{2N_1+1}$.
\end{itemize}
In this case we can define a continuation map $HF^{S^1,N_1}(H_1) \to HF^{S^1,N_2}(H_2)$ using an increasing homotopy from $H_1$ to $(\widetilde{i_0}^{\star})^{N_2-N_1}H_2$ on $S^1\times\widehat{X}\times S^{2N_1+1}$.

\begin{definition}
Define the \emph{$S^1$-equivariant symplectic homology}
\[
SH^{S^1}_*(X,\lambda) := \lim_{\substack{\longrightarrow\\ N,H}}HF^{S^1,N}_*(H).
\]
\end{definition}

\subsubsection{Positive $S^1$-equivariant symplectic homology}
\label{sec:SHS1+}

As for symplectic homology, $S^1$-equivariant symplectic homology also has a positive version in which constant $1$-periodic orbits are discarded.

\begin{definition}
Let $H:S^1\times\R^{2n}\times S^{2N+1}\to\R$ be an admissible parametrized Hamiltonian. The \emph{parametrized action functional} $\mc{A}_H: \ C^{\infty}(S^1,\R^{2n})\times S^{2N+1}\longrightarrow\R$  is defined by
\begin{equation}
\label{eq:paramaction}	\mc{A}_H(\gamma,z):=-\int_{\gamma}\widehat{\lambda}-\int_{S^1}H\bigl(\theta,\gamma(\theta),z\bigr)d\theta.
\end{equation}
\end{definition}

\begin{lemma}
\label{lem:dpa}
If $H$ is an admissible parametrized Hamiltonian, and if $J$ is a generic $S^1$-invariant family of almost complex structures as in \eqref{eqn:Jparam}, then the differential $\partial^{S^1}$ on $CF^{S^1,N}(H,J)$ does not increase the parametrized action \eqref{eq:paramaction}.
\end{lemma}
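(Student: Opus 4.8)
The plan is to establish Lemma~\ref{lem:dpa} by a standard energy/action identity for the parametrized Floer equation \eqref{eq:fleorparam}, following the same structure as the non-equivariant case, with the extra term coming from the gradient-flow equation for $\eta$. Concretely, I would take a solution $(u,\eta)\in\widehat{\Mod}(S_{p^-},S_{p^+};J)$ contributing to $\partial^{S^1}$, and compute $\frac{d}{ds}\,\mc{A}_{H_{\eta(s)}}\bigl(u(s,\cdot)\bigr)$ directly, keeping track of the fact that both the loop $u(s,\cdot)$ and the Hamiltonian parameter $\eta(s)$ vary with $s$. By the chain rule this derivative splits into two pieces: the ``usual'' Floer piece $\frac{\partial}{\partial s}$ acting on the loop, and the piece $-\int_{S^1}\bigl(\partial_z H\bigr)\bigl(\theta,u(s,\theta),\eta(s)\bigr)\cdot\dot\eta(s)\,d\theta$ coming from the $s$-dependence of the Hamiltonian through $z=\eta(s)$.

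First I would handle the loop piece exactly as in the non-equivariant setting: differentiating $\mc{A}_{H_z}(\gamma)=-\int_{S^1}\gamma^{\star}\widehat{\lambda}_0-\int_{S^1}H_z(\theta,\gamma)\,d\theta$ in the loop variable and substituting Floer's equation $\partial_s u = -J^\theta_{\eta(s)}(\partial_\theta u - X^\theta_{H_{\eta(s)}})$ yields $-\int_{S^1}\omega_0(\partial_s u,\partial_s u)\,d\theta=-\int_{S^1}|\partial_s u|^2_{J}\,d\theta\le 0$, where $|\cdot|_J$ is the metric determined by $J$ and $\omega_0$. This uses only $\omega_0$-compatibility of $J^\theta_{\eta(s)}$, which holds since $J\in\mathcal{J}$. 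Second, for the parameter piece I would use admissibility condition (iii) of Definition~\ref{def:aph}: $H$ is nondecreasing along downward gradient flow lines of $\widetilde f_N$, i.e. $\partial_s\bigl(H(\theta,x,\eta(s))\bigr)=dH(\partial_z;\dot\eta)\ge 0$ along solutions of $\dot\eta=-\vec\nabla\widetilde f_N(\eta)$. Hence the extra term $-\int_{S^1}\partial_s\bigl(H(\theta,u(s,\theta),\eta(s))\bigr)\big|_{u\text{ fixed}}\,d\theta$ is $\le 0$ as well. Adding the two nonpositive contributions gives $\frac{d}{ds}\mc{A}_{H_{\eta(s)}}(u(s,\cdot))\le 0$, so the parametrized action is nonincreasing along the flow, and therefore $\mc{A}_H(\gamma^+,z^+)\le\mc{A}_H(\gamma^-,z^-)$ whenever there is a flow line from $S_{p^-}$ to $S_{p^+}$. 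Since $\partial^{S^1}$ counts such flow lines, it does not increase the parametrized action; one should also note $\mc{A}_H$ is $S^1$-invariant so it descends to a well-defined function of the orbits $S_p$.

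**The main obstacle** I expect is purely bookkeeping rather than conceptual: one must be careful that the ``$\partial_z H$'' term is interpreted correctly — it is the derivative of $H$ in the $S^{2N+1}$-direction, paired against $\dot\eta$, and its sign is governed by condition (iii), not by any convexity of $\widetilde f_N$ — and one must make sure the boundary terms at $s\to\pm\infty$ are the actions of the asymptotic orbits (which requires the standard exponential-decay/convergence of finite-energy solutions, a fact already built into the definition of $\widehat{\Mod}$). A minor additional point is that the $S^1$-action shifts $\theta$ and acts on $z$, so one checks $\mc{A}_H(\varphi\cdot(\gamma,z))=\mc{A}_H(\gamma,z)$ using $S^1$-invariance of $H$ and of $\widehat\lambda_0$, which is what makes the statement about the complex $CF^{S^1,N}(H,J)$ (whose generators are orbits $S_p$) meaningful. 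None of these requires new input beyond Definitions~\ref{def:admJ} and~\ref{def:aph} and the convergence already assumed for elements of the moduli spaces.
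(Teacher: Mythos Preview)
Your proposal is correct and is essentially the same argument as the paper's, just unpacked: the paper observes that once $\eta$ is fixed, $u$ solves an instance of the continuation equation \eqref{eq:floerparam} with $H_s=H_{\eta(s)}$, which by condition (iii) of Definition~\ref{def:aph} is a nondecreasing homotopy, so the action is nonincreasing ``as before''; your direct chain-rule computation of $\frac{d}{ds}\mc{A}_{H_{\eta(s)}}(u(s,\cdot))$ is precisely the standard calculation underlying that invoked fact.
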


\begin{proof}
Given a solution $(u,\eta)$ to the equations \eqref{eq:fleorparam}, one can think of $\eta$ as fixed and regard $u$ as a solution to an instance of equation \eqref{eq:floerparam}, where $J_s$ and $H_s$ in \eqref{eq:floerparam} are determined by $\eta$. By condition (iii) in Definition~\ref{def:aph}, this instance of \eqref{eq:floerparam} corresponds to a nondecreasing homotopy of Hamiltonians. Consequently, the action is nonincreasing along this solution of \eqref{eq:floerparam} as before.
\end{proof}

It follows from Lemma~\ref{lem:dpa} that for any $L\in\R$, we have a subcomplex $CF^{S^1,N,\le L}(H,J)$ of $CF^{S^1,N}(H,J)$, spanned by $S^1$-orbits of pairs $(\gamma,z)$ where $z\in\op{Crit}(\tilde{f}_N)$ and $\gamma$ is a $1$-periodic orbit of $H_z$ with $\mc{A}_H(z,\gamma)\le L$.

As in \S\ref{sec:psh}, if the $S^1$-orbit of $(\gamma,z)$ is a generator of $CF^{S^1,N}(H,J)$, then there are two possibilities: (i) $\gamma$ is a constant orbit corresponding to a critical point of $H_z$ on $X$, and $\mc{A}_H(z,\gamma)<\epsilon$; or (ii) $\gamma$ is close to a Reeb orbit in $\{\rho\}\times Y$ with period $-h'(e^\rho)$, and $\mc{A}_H(z,\gamma)$ is close to this period; in particular $\mc{A}_H(\gamma,z)>\epsilon$.

\begin{definition}
\label{def:positiveequivSH}
Consider the quotient complex
\begin{equation}
\label{eqn:peshquotient}
CF^{S^1,N,+}(H,J) := \frac{CF^{S^1,N}(H,J)}{CF^{S^1,N,\leq\epsilon}(H,J)}.
\end{equation}
As in Definition~\ref{def:psh}, the homology of the quotient complex is independent of $J$, so we can denote this homology by $HF^{S^1,N,+}(H)$; and we have continuation maps $HF^{S^1,N_1,+}(H_1)\to HF^{S^1,N_2,+}(H_2)$ when $(N_1,H_1)\le (N_2,H_2)$. We now define the \emph{positive $S^1$-equivariant symplectic homology} by
\begin{equation}
\label{eqn:pesh}
SH^{S^1,+}(X,\lambda) := \lim_{\substack{\longrightarrow\\ {N,H}}}HF^{S^1,N,+}(H).
\end{equation}
\end{definition}

%


%
\section{Equivariant Morse Homology for the action functional}\label{sec:abbondandolo}
\subsection{Morse homology for Hilbert spaces}\label{sec:generalmorse}
Abbondandolo and Majer \cite{AM, AbMa, AbMa2, AbMa3}, have defined a relative Morse homology on Hilbert spaces for some functionals. This applies, in particular, to the (parametrized) action functional but, we start by recalling the general definition, following the aforementionned references as it might be of independent interest. Let $\Hi$ be a real Hilbert space and $L$ a linear, invertible, self-adjoint operator on $\Hi$, one considers the class of functionals $f:\Hi\to\R$ of the form
\[
	f(x) = \frac{1}{2} (Lx\,,\,x)+b(x)
\]
where $b$ is $C^2$ and $\nabla b:\Hi\to\Hi$ is a compact map.
Denote this class of functionals by $\FF(L)$.
The main idea is that under suitable assumptions, even so the Morse indices and coindices of the critical points are infinite, the intersections of stable and unstable manifolds, $W^u(x)\intt W^s(y)$ are finite dimensional. To prove such a result (and to define a relative Morse index) requires a orthogonal decomposition of the Hilbert space $\Hi$ in two subspaces.

Given a bounded self-adjoint operator $S:\Hi\to\Hi$, denote by $V^+(S)$ (respectively $V^-(S)$) the maximal $S$-invariant subspace on which $S$ is strictly positive (respectively strictly negative). The spaces $V^+(S)$ and $V^-(S)$ are called the \textbf{positive eigenspace of $S$} and the \textbf{negative eigenspace of $S$} respectively.
Since the operator $L$ has been fixed, we denote by $\Hi^+$ and $\Hi^-$ the positive and negative eigenspaces of $L$
\[
	\Hi^+\eqdef V^+(L),\qquad\Hi^-\eqdef V^-(L).
\]
Note that we have $\Hi=\Hi^+\oplus\Hi^-$.

The Hessian of a functional $f\in\FF(L)$ at $x$ is given by
\[
	D^{2}f(x)=L+D^2b(x).
\]
Note that $D^{2}f(x)$ is a Fredholm operator since $D^2b(x)$ is a compact linear operator (because $\nabla b$ is compact).

We now recall the notion of  ``relative Morse index'' for the critical points of $f$.
\begin{definition}
	Let $V$ and $W$ be closed linear subspaces of a Hilbert space $\Hi$. They form a \textbf{Fredholm pair} if $\dim(V\intt W)<\infty$, $V+W$ is closed and $\dim\frac{\Hi}{{V+W}} = \dim(V+W)^{\perp}=\dim(V^{\perp}\intt W^{\perp})<\infty$.
\end{definition}
\begin{remark}
	An operator $A:\Hi_1\to\Hi_2$ is Fredholm if and only if $\big(\Hi_1\times\{0\},Graph(A)\big)$ is a Fredholm pair in $\Hi_1\times\Hi_2$. The \textbf{index of a Fredholm pair} $(V,W)$ is defined as 
	\[
		ind(V,W)=\dim(V\intt W)-\codim(V+W)\in\Z.
	\]
\end{remark}
Let $V$ and $W$ be closed linear subspaces of a Hilbert space $\Hi$. $W$ is a \textbf{compact perturbation of $V$} if $P_{W}-P_{V}$ is compact, where $P$ is the orthogonal projection. In particular $(V,W^{\perp})$ is a Fredholm pair. The relative dimension of $V$ with respect to $W$ is defined as $\dim(V,W):=ind(V,W^{\perp})=\dim(V\intt W^{\perp})-\dim(V^{\perp}\intt W)$.

If $A$ is a self-adjoint Fredholm operator and $K$ is a compact operator, $V^{-}(A)$ is a compact perturbation of $V^{-}(A+K)$.

Going back to the functional $f$, we have $D^{2}f(x)=L + D^{2}b(x)$ where $D^{2}b(x)$ is a compact operator. We have that $V^{-}\big(D^{2}f(x)\big)$ is a compact perturbation of $\Hi^{-}$ and we can define the \textbf{relative Morse index} of $x$ as
 \[
 	ind_{\Hi^{-}}(x)=\dim\Big(V^{-}\big(D^{2}f(x)\big),\Hi^{-}\Big).
\]
Remark that when $\Hi^{-}=\{0\}$ , this index is the usual Morse index. We denote by $\op{crit}_{k}(f)$ the set of critical points of $f$ of relative Morse index $k$.

Now, let $x$ and $y$ be critical points of $f$, we look at $W^{u}(x)\cap W^{s}(y)$ to define moduli spaces of gradient trajectories $u'=\nabla f(u)$.

Let $I\subset\R\cup\{-\infty,+\infty\}$ be an interval.
\begin{definition}
	A functional $f\in C^2(\Hi)$ is called \emph{Morse} on $I$ if the Hessian $D^2f(x)$ is invertible for every critical point $x$ such that $f(x)\in I$.
\end{definition}
Assuming that the functional $f$ is Morse, we have the two following facts $\forall p\in W^{u}(x)$:
\begin{enumerate}
	\item $T_{p}W^{u}(x)$ is a compact perturbation of $\Hi^{-}$ with relative dimension $ind_{\Hi^{-}}(x)$
	\item $\big(T_{p}W^{s}(x),\Hi^{-}\big)$ are Fredholm pairs
\end{enumerate}

If $p\in W^{u}(x)\cap W^{s}(y)$, $\big(T_{p}W^{u}(x), T_{p}W^{s}(y)\big)$ is a Fredholm pair of index $ind_{\Hi^{-}}(x)-ind_{\Hi^{-}}(y)$.

In our case, the gradient trajectories are of the form:
\[
	u'(t)=-\nabla f(u) = -Lu-\nabla b
\]
So $u'+Lu=-\nabla b$, multiplying by $e^{tL}$, we have
\[
	\frac{d}{dt}e^{tL}u=e^{tL}(u'+Lu) = -e^{tL}\nabla b
\]
and thus
\[
	u(t)=e^{-tL}\left(u(0)-\int_{0}^{t}{e^{sL}\nabla b\big(u(s)\big)ds}\right).
\]
\begin{definition}
	A functional $f\in C^1(\Hi)$ satisfies the \emph{Palais-Smale condition} on $I$ if every sequence $(x_n)\subset\Hi$ such that $\lim_{n\to\infty}f(x_n)=c\in I$ and $\lim_{n\to\infty}\nabla f(x_n)=0$ is relatively compact.
\end{definition}

\begin{lemma}
	The functional $f$ satisfies the Palais-Smale condition (PS) if and only if all PS sequences are bounded.
\end{lemma}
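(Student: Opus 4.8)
The plan is to prove the two implications separately, the first being essentially formal and the second exploiting the special structure $f\in\FF(L)$.

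For the ``only if'' direction, suppose $f$ satisfies (PS), and let $(x_n)$ be a Palais--Smale sequence, i.e.\ $f(x_n)\to c$ and $\nabla f(x_n)\to 0$. By (PS) the sequence $(x_n)$ is relatively compact in $\Hi$. Since $\Hi$ is a metric space, a relatively compact subset is bounded (its closure is compact, hence bounded), so $(x_n)$ is bounded. This is the easy half.

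For the ``if'' direction, assume that every Palais--Smale sequence is bounded, and let $(x_n)$ be such a sequence; by hypothesis $\sup_n\norm{x_n}<\infty$. Here I would use the decomposition $\nabla f(x) = Lx + \nabla b(x)$, rewriting it as
\[
	Lx_n = \nabla f(x_n) - \nabla b(x_n).
\]
Since $\nabla b$ is a compact map and $(x_n)$ is bounded, the sequence $\bigl(\nabla b(x_n)\bigr)$ admits a convergent subsequence; passing to that subsequence and using $\nabla f(x_n)\to 0$, the right-hand side converges in $\Hi$, hence so does $(Lx_n)$ along the subsequence. As $L$ is linear, invertible and self-adjoint, $L^{-1}$ is bounded (by the open mapping theorem), so $x_n = L^{-1}(Lx_n)$ converges along the same subsequence. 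Thus $(x_n)$ has a convergent subsequence, i.e.\ it is relatively compact, which establishes (PS).

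I do not expect a genuine obstacle here: the only points requiring care are that ``compact map'' means $\nabla b$ sends bounded sets to relatively compact sets (so one really does get a convergent subsequence of $\nabla b(x_n)$), and that invertibility of the bounded operator $L$ forces $L^{-1}$ to be bounded. Both are standard, so the argument is routine once the structural identity $\nabla f = L + \nabla b$ is invoked.
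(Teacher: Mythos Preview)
Your proof is correct and follows essentially the same approach as the paper's: rewrite $Lx_n=\nabla f(x_n)-\nabla b(x_n)$, use compactness of $\nabla b$ on the bounded sequence to extract a convergent subsequence, then apply $L^{-1}$. In fact your write-up is more careful than the paper's, since you explicitly pass to a subsequence and also dispatch the (trivial) ``only if'' direction, which the paper omits.
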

\begin{proof}
	Indeed, $\nabla f(x)=Lx+\nabla b(x)$. Take a PS sequence $x_n$, so $\nabla f(x_n)\to0$ and, since $\nabla b$ is compact, $\nabla b(x_n)\to z$. Therefore $Lx_n\to-z$. Since $L$ is invertible, $x_n\to-L^{-1}z$.
\end{proof}
\begin{definition}
	A functional $f\in C^2(\Hi)$ has the \emph{Morse-Smale property} on $I$ up to order $k$ if it is a Morse function on $I$ and the unstable and stable manifolds of every pair of critical points $x,y \in f^{-1}(I)$ such that $ind_{\Hi^{-}}(x)-ind_{\Hi^{-}}(y)\leq k$, meet transversally
\end{definition}
\begin{theorem}
	Assume that the functional $f\in\FF(L)$ satisfies PS and the Morse-Smale property up to order $k$ on the interval $I$. Let $x,y \in f^{-1}(I)$ be two critical points of $f$ such that $ind_{\Hi^{-}}(x)-ind_{\Hi^{-}}(y)\leq k$. Then $W^{u}(x)\cap W^{s}(y)$, if nonempty, is an embedded $C^1$-submanifold of $\Hi$ of dimension
	\[
		\dim\big(W^{u}(x)\cap W^{s}(y) \big) = ind_{\Hi^{-}}(x)-ind_{\Hi^{-}}(y).
	\]
	Moreover, we have the following:
	\begin{itemize}
		\item When $k\geq0$, $ind_{\Hi^{-}}(x)-ind_{\Hi^{-}}(y)\leq0$, and $x\neq y$, we have $W^{u}(x)\cap W^{s}(y)=\emptyset$.
		\item When $k\geq0$, and $ind_{\Hi^{-}}(x)-ind_{\Hi^{-}}(y)=1$, $W^{u}(x)\cap W^{s}(y)\cup\{x,y\}$ is compact.
	\end{itemize}
\end{theorem}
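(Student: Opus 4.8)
The plan is to prove the three assertions in turn, building on the Fredholm-pair facts recorded just above the theorem and on the compactness theory of Abbondandolo--Majer for functionals in $\FF(L)$. First I would handle the submanifold structure and dimension. Fix $p\in W^{u}(x)\cap W^{s}(y)$ and write $d:=ind_{\Hi^{-}}(x)-ind_{\Hi^{-}}(y)$. Since $f$ is Morse and has the Morse--Smale property up to order $k\geq d$, the $C^1$ split submanifolds $W^{u}(x)$ and $W^{s}(y)$ meet transversally at $p$, i.e.\ $T_{p}W^{u}(x)+T_{p}W^{s}(y)=\Hi$. By the recalled facts, $\big(T_{p}W^{u}(x),T_{p}W^{s}(y)\big)$ is a Fredholm pair of index $d$, so transversality forces
\[
\dim\big(T_{p}W^{u}(x)\cap T_{p}W^{s}(y)\big)=ind\big(T_{p}W^{u}(x),T_{p}W^{s}(y)\big)=d.
\]
In particular $d\geq 0$ whenever the intersection is non-empty, so for $d<0$ there is nothing to prove. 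For $d\geq 0$, this intersection of tangent spaces is finite-dimensional, hence complemented, and the standard transverse-intersection theorem for split $C^1$ submanifolds of a Hilbert manifold shows that $W^{u}(x)\cap W^{s}(y)$ is, near $p$, an embedded $C^1$ submanifold with tangent space $T_{p}W^{u}(x)\cap T_{p}W^{s}(y)$, hence of dimension $d$; since $d$ is independent of $p$, the dimension is globally constant, and embeddedness is inherited from $W^{u}(x)\hookrightarrow\Hi$.

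For the first ``moreover'' bullet, suppose $d\leq 0$ and $x\neq y$. If $d<0$ the intersection is empty by the above. If $d=0$ and $p\in W^{u}(x)\cap W^{s}(y)$, then near $p$ the intersection is $0$-dimensional, so $p$ is isolated in it. But the gradient trajectory $u$ with $u(0)=p$ satisfies $\lim_{t\to-\infty}u(t)=x$ and $\lim_{t\to+\infty}u(t)=y$, so its whole image lies in $W^{u}(x)\cap W^{s}(y)$; as $x\neq y$ this trajectory is non-constant, $t\mapsto f(u(t))$ is strictly decreasing (because $\frac{d}{dt}f(u(t))=-\|\nabla f(u(t))\|^{2}<0$), hence $u$ is injective, and the points $u(t)$ with $t$ near $0$ lie in the intersection, are distinct from $p$, and converge to $p$ — contradicting that $p$ is isolated. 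Therefore $W^{u}(x)\cap W^{s}(y)=\emptyset$.

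For the compactness bullet ($d=1$, so $x\neq y$), let $(q_{n})$ be a sequence in $\big(W^{u}(x)\cap W^{s}(y)\big)\cup\{x,y\}$; discarding the trivial cases we may assume $q_{n}=p_{n}\in W^{u}(x)\cap W^{s}(y)$, lying on gradient trajectories $u_{n}\colon\R\to\Hi$ from $x$ to $y$, reparametrised so that $u_{n}(0)=p_{n}$. Using Palais--Smale together with the integral representation $u(t)=e^{-tL}\big(u(0)-\int_{0}^{t}e^{sL}\nabla b(u(s))\,ds\big)$ and the compactness of $\nabla b$, the Abbondandolo--Majer compactness theorem provides a subsequence, critical points $x=c_{0},\dots,c_{N}=y$, non-constant gradient trajectories $v^{i}$ from $c_{i-1}$ to $c_{i}$, and shifts $\sigma^{i}_{n}$ with $u_{n}(\cdot+\sigma^{i}_{n})\to v^{i}$ in $C^{1}_{\mathrm{loc}}$. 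Additivity of the relative Morse index along the broken configuration gives $\sum_{i}\big(ind_{\Hi^{-}}(c_{i-1})-ind_{\Hi^{-}}(c_{i})\big)=1$, while by the previous bullet each $v^{i}$ forces $ind_{\Hi^{-}}(c_{i-1})-ind_{\Hi^{-}}(c_{i})\geq 1$; hence $N=1$ and no breaking occurs, so $u_{n}(\cdot+\sigma_{n})\to u_{\infty}$ in $C^{1}_{\mathrm{loc}}$ for a gradient trajectory $u_{\infty}$ from $x$ to $y$. Writing $p_{n}=\big(u_{n}(\cdot+\sigma_{n})\big)(-\sigma_{n})$ and passing to a further subsequence: if $\sigma_{n}\to\sigma_{\infty}\in\R$ then $p_{n}\to u_{\infty}(-\sigma_{\infty})\in W^{u}(x)\cap W^{s}(y)$; if $\sigma_{n}\to+\infty$, resp.\ $\sigma_{n}\to-\infty$, then the uniform exponential convergence of gradient trajectories to the hyperbolic rest points forces $p_{n}\to x$, resp.\ $p_{n}\to y$. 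In every case the limit lies in $\big(W^{u}(x)\cap W^{s}(y)\big)\cup\{x,y\}$, which is therefore compact.

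The first two steps are essentially formal, combining the recalled Fredholm-pair facts with the implicit function theorem in Hilbert space. I expect the main obstacle to be the $d=1$ case: one needs the infinite-dimensional compactness of the space of gradient trajectories modulo breaking — where the structural hypotheses ($f\in\FF(L)$, $\nabla b$ compact, and Palais--Smale) are genuinely used — together with uniform exponential estimates near the hyperbolic critical points, which are exactly what rules out escape of the $p_{n}$ to anything other than $x$ or $y$.
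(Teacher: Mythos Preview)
The paper does not actually prove this theorem: it is stated in \S\ref{sec:generalmorse} as part of the review of Abbondandolo--Majer's relative Morse theory on Hilbert spaces, with the references \cite{AM, AbMa, AbMa2, AbMa3} serving as the source; no argument is given in the paper itself. Your proposal is a correct outline of the standard proof from that literature: the submanifold and dimension statement via transversality plus the Fredholm-pair index, the $d\le 0$ bullet via the flow-line-in-a-discrete-set contradiction, and the $d=1$ compactness via the Abbondandolo--Majer broken-trajectory compactness combined with the index count forcing $N=1$. The only place I would tighten is the last step: when $\sigma_n\to\pm\infty$, the claim $p_n\to x$ (resp.\ $y$) is not quite a consequence of ``exponential convergence'' alone --- you need that outside the compact window where $u_n(\cdot+\sigma_n)$ is $C^1_{\mathrm{loc}}$-close to $u_\infty$, the action of $u_n$ is pinched near $f(x)$ (resp.\ $f(y)$), and then Palais--Smale plus hyperbolicity localises $p_n$ near the critical point; this is exactly how Abbondandolo--Majer argue, so citing their compactness theorem more precisely would close the gap.
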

This theorem implies, in particular, that when $ind_{\Hi^{-}}(x)-ind_{\Hi^{-}}(y)=1$, there is a finite number of trajectories from $x$ to $y$.
The manifolds $W^{u}(x)\cap W^{s}(y)$ admit an orientation \cite[\S3.5]{AM} and thus, when  $ind_{\Hi^{-}}(x)-ind_{\Hi^{-}}(y)=1$, all the trajectories  from $x$ to $y$ come with a sign.

The idea of orientation is the following. Let $F_{p}(\Hi)$ denote the set of Fredholm pairs in $\Hi$. We have the non-trivial line bundle
\[
	\xymatrix{
		\Lambda^{\textrm{max}}(V\intt W)\otimes \Lambda^{\textrm{max}}\left(\Big(\frac{\Hi}{V+W}\Big)^{*}\right)  \ar@{^{(}->}[r] & \op{Det}\big(F_{p}(\Hi)\big)\ar[d]\\
		 & F_{p}(\Hi)
	}
\]
If $x$ is a critical point of the functional $f$, the pair $\big(T_{x}W^{u}(x),\Hi^{+}\big)$ is in $F_{p}(\Hi)$. We choose an orientation of the determinant line bundle over this pairs and we do the same at every (critical) point. This induces an orientation over $\big(T_{x}W^{s}(x),\Hi^{-}\big)$.\\
Thus, $\big(T_{p}W^{u}(x),\Hi^{+}\big)$ and $\big(T_{p}W^{s}(x),\Hi^{-}\big)$ are oriented for all $p\in W^{u}(x)$. This induces a canonical orientation of $\big(T_{p}W^{u}(x),T_{p}W^{s}(x)\big)$. If the functional is Morse-Smale, we are done.

When $ind_{\Hi^{-}}(x)-ind_{\Hi^{-}}(y)=1$, let $\#\mathcal{N}(x,y)$ denote the count, with signs, of trajectories from $x$ to $y$.

Given an interval $I$ of the extended real line and a functional $f$ satisfying the following conditions
\begin{enumerate}
	\item[(M.1)]\label{M1} $f\in\FF(L)$;
	\item[(M.2)] $f$ satisfies the PS condition on $I$;
	\item[(M.3)] $f$ is a Morse function on $I$;
	\item[(M.4)] $f$ has the Morse-Smale property on $I$ up to order 2;
	\item[(M.5)]\label{M5} for every $a\in I$ and every $k\in\Z$, the set $\op{crit}_k\big(f,I\cap(-\infty,a]\big)$ is finite;
\end{enumerate}
we can define a Morse homology of the pair $(f,I)$.
The Morse complex in degree $k$ is defined as
\[
	CM_k(f,I):=\oplus_{x\in\op{crit}_k(f,I)}\Q\langle x\rangle.
\]
and the boundary operator $\partial_{k}^{f,I}: CM_k(f,I)\to CM_{k-1}(f,I)$ is defined, for $x\in\op{crit}_k(f,I)$, as
\[
	\partial_{k}^{f,I}(x) = \sum_{y\in\op{crit}_{k-1}(f,I)}\#\mathcal{N}(x,y) y.
\]
\begin{theorem}
	Assuming the functional $f$ satisfies \hyperref[M1]{(M.1)}--\hyperref[M5]{(M.5)}, the boundary operator $\partial_{k}^{f,I}$ is an actual boundary homomorphism, i.e.
	\[
		\partial_{k}^{f,I}\circ\partial_{k}^{f,I}=0.
	\]
\end{theorem}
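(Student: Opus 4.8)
The plan is to run the standard argument that the square of the Morse differential vanishes, with the finite-dimensional transversality-and-gluing input replaced by the corresponding statements of Abbondandolo--Majer recalled (or cited) above. Fix $x\in\op{crit}_k(f,I)$ and $z\in\op{crit}_{k-2}(f,I)$; it suffices to show that the coefficient of $z$ in $\partial^{f,I}_{k-1}\partial^{f,I}_k x$, namely
\[
\sum_{y\in\op{crit}_{k-1}(f,I)}\#\mathcal{N}(x,y)\,\#\mathcal{N}(y,z),
\]
is zero. Since $f$ is strictly decreasing along every nonconstant trajectory of $\nabla f$, only critical points $y$ with $f(z)<f(y)<f(x)$ can contribute, and by (M.5) there are only finitely many such $y$ of relative Morse index $k-1$; hence the sum is finite and well defined.

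First I would record which broken configurations occur. By the Theorem preceding this statement, applied with $k=2$ — legitimate under hypothesis (M.4) — for any two critical points $a,b\in f^{-1}(I)$ with $\op{ind}_{\Hi^-}(a)-\op{ind}_{\Hi^-}(b)\le 2$ the set $W^u(a)\cap W^s(b)$, if nonempty, is an embedded $C^1$-submanifold of dimension $\op{ind}_{\Hi^-}(a)-\op{ind}_{\Hi^-}(b)$, and it is empty when this difference is $\le 0$ and $a\neq b$. Consequently, in a broken trajectory $x\to y_1\to\cdots\to y_m\to z$ through distinct critical points, each piece is a positive-dimensional manifold, so the total index drop of $2$ forces $m=1$ and $\op{ind}_{\Hi^-}(y_1)=k-1$: only simple breaking through index-$(k-1)$ critical points is possible, and the sum above already accounts for all of it.

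The heart of the proof is the compactified $1$-dimensional moduli space. Set $\mathcal{N}(x,z):=\bigl(W^u(x)\cap W^s(z)\bigr)/\R$; by the above this is a $1$-dimensional $C^1$-manifold, oriented as in \cite[\S3.5]{AM}. I would invoke the compactness and gluing theory of Abbondandolo--Majer to produce a compactification $\overline{\mathcal{N}}(x,z)$ that is a compact $1$-manifold with boundary, with
\[
\partial\overline{\mathcal{N}}(x,z)\;=\;\bigsqcup_{y\in\op{crit}_{k-1}(f,I)}\mathcal{N}(x,y)\times\mathcal{N}(y,z),
\]
the identification being orientation-compatible up to the usual universal sign. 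Here the compactness-up-to-breaking uses the Palais--Smale condition (M.2) to rule out escape to infinity, together with the structural hypothesis $f\in\FF(L)$: the variation-of-constants identity $u(t)=e^{-tL}\bigl(u(0)-\int_0^t e^{sL}\nabla b(u(s))\,ds\bigr)$ and the compactness of $\nabla b$ play the role that elliptic estimates play in the finite-dimensional setting. Surjectivity of gluing — that every end of $\mathcal{N}(x,z)$ is a simply broken trajectory and, conversely, every broken trajectory through an index-$(k-1)$ point is glued to a genuine family of unbroken ones — is an implicit function theorem on the space of broken configurations. Both ingredients are carried out in \cite{AM, AbMa, AbMa2, AbMa3}, and assembling them in this relative, interval-filtered form is the one genuinely nontrivial step; everything else is formal.

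Finally, a compact oriented $1$-manifold with boundary has $\sum_{p\in\partial\overline{\mathcal{N}}(x,z)}\op{sign}(p)=0$. Evaluating the right-hand side of the boundary formula and using that, again by the preceding Theorem, $\#\mathcal{N}(x,y)$ and $\#\mathcal{N}(y,z)$ are precisely the signed counts of the $0$-dimensional factors $\mathcal{N}(x,y)$ and $\mathcal{N}(y,z)$, this identity becomes $\sum_{y\in\op{crit}_{k-1}(f,I)}\#\mathcal{N}(x,y)\,\#\mathcal{N}(y,z)=0$. As $x$ and $z$ were arbitrary, $\partial^{f,I}_{k-1}\circ\partial^{f,I}_k=0$, as claimed.
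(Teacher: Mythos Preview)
The paper does not actually give a proof of this theorem: it is stated as part of the general recall of Abbondandolo--Majer's Morse theory on Hilbert spaces, with the proof deferred entirely to the references \cite{AM, AbMa, AbMa2, AbMa3}. Your proposal supplies precisely the standard argument that those references carry out --- compactness of the $1$-dimensional moduli space $\mathcal{N}(x,z)$ up to simple breaking (using PS and the compactness of $\nabla b$), gluing at broken trajectories, and the vanishing of the signed boundary count of a compact oriented $1$-manifold --- so there is nothing to compare beyond noting that you have correctly reconstructed the outline of the cited proof.

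One small cosmetic point: you write $\partial^{f,I}_{k-1}\circ\partial^{f,I}_k$, whereas the theorem as stated in the paper has the (slightly sloppy) subscript $k$ on both factors; your version is the intended one.
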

Therefore the pair $\big(CM(f,I),\partial_{k}^{f,I}\big)$ is a chain complex called the \emph{Morse complex} of $(f,I)$ and its homology is called the \emph{Morse homology} of $(f,I)$.

As for Morse theory in finite dimensions, changing the functional lead to a morphism between the Morse homologies.
Let $f_{0}$ and $f_{1}$ be two functionals and let $f_{s}$ be a homotopy interpolating between the two; $f_{s}=f_{0}$ for $s\leq\eps$ and $f_{s}=f_{1}$ for $s\geq1-\eps$. Let $\varphi:\R\to\R$ a smooth function with two critical points: a maximum at $0$, with $\varphi(0)=1$ and a minimum at $1$ with $\varphi(1)=0$. Let $\widetilde{f}:\R\times\Hi\to\R$ be the functional defined by $\widetilde{f}(s,x) = \varphi(s)+f_{s}(x)$. The critical points of $\widetilde{f}$ of index $k$ are
\[
	\op{crit}_{k}\widetilde{f} = \{0\}\times\op{crit}_{k-1}f_{0} \union \{1\}\times\op{crit}_{k}f_{1}.
\]
The associated differential $\partial_{\widetilde{f}}$ writes as
\[
	\partial_{\widetilde{f}} = \left(\begin{matrix} \partial_{f_{0}} & \phi\\ 0 & \partial_{f_{1}} \end{matrix}\right).
\]
This $\phi$ is precisely the continuation map (as in finite-dimensional Morse homology). It counts parametrized gradient trajectories between critical points of $f_0$ and $f_1$ of the same index.

\subsection{The case of the action functional for star-shaped domains}\label{HMAH}
We start with an admissible parametrized Hamiltonian $H$ as in Defintion \ref{def:aph}.
Let $\LL(\R^{2n}):=C^{\infty}(S^{1},\R^{2n})$ be the free loop space of $\R^{2n}$. The parametrized Hamiltonian action functional $\Ac_{H}$ on $\LL(\R^{2n})\times S^{2N+1}$ is defined as
\begin{equation}\label{AH2}
	\mc{A}_H(\gamma,z):=-\int_{\gamma}\widehat{\lambda}-\int_{S^1}H\bigl(\theta,\gamma(\theta),z\bigr)d\theta.
\end{equation}
To ensure, we have a Morse theory of this $\Ac_{H}$, we have to complete  $\LL(\R^{2n})$ to a Hilbert manifold; its structure will be induced by $H^{\frac{1}{2}}(S^1,\R^{2n})$.
Then we shall extend the functional $\Ac_{H}$ and check that it satisfies the 5 conditions \hyperref[M1]{(M.1)}--\hyperref[M5]{(M.5)} listed above.

\subsubsection{The Hilbert manifold}
Since $\LL(\R^{2n})\subset L^2(S^1,\R^{2n})$, every element $x\in\LL(\R^{2n})$ can be written as a Fourier series with coefficients in $\R^{2n}$.
\[
	x(t)=\sum_{k\in\Z}x_ke^{2\pi ikt}.
\]
Using this Fourier decomposition, $\LL(\R^{2n})$ can be completed in the Sobolev space: $H^{\frac{1}{2}}(S^1,\R^{2n})$ (which carries a Hilbert structure).
\[
	H^{\frac{1}{2}}(S^1,\R^{2n})\eqdef\left\{x\in L^2(S^1,\R^{2n})\,|\,\sum_{k\in\Z}|k|\|x_k\|^2<\infty \right\}.
\]
We have the orthogonal decomposition
\[
	H^{\frac{1}{2}}(S^1,\R^{2n}) = E^{+}\oplus E^{0}\oplus E^{-}
\]
 with respect to the inner product $\langle x,y\rangle := \langle x_0,y_0\rangle + 2\pi\sum_{0\neq k\in\Z}|k|\langle x_k,y_k\rangle$ and where
 \begin{align*}
 	E^{-} &= \{x\in H^{\frac{1}{2}}(S^1,\R^{2n})\,|\,x_k=0 \textrm{ for } k\geq0\}\\
	E^{0} &= \{x\in H^{\frac{1}{2}}(S^1,\R^{2n})\,|\,x_k=0 \textrm{ for } k\neq0\}\cong\R^{2n}\\
	E^{+} &= \{x\in H^{\frac{1}{2}}(S^1,\R^{2n})\,|\,x_k=0 \textrm{ for } k\leq0\}.
 \end{align*}
Let $P_{E^+}$, $P_{E^-}$ and $P_{E^0}$ denote the orthogonal projections on $E^{+}$, $E^{-}$ and $E^{0}$ respectively.

\subsubsection{The functional}
Recall the class $\FF(L)$ of functionals for which the Morse homology is defined. Let $\Hi$ be a real Hilbert space and let $L$ be a linear, invertible, self-adjoint operator on $\Hi$. We are looking at the functional $f:\Hi\to\R$
\[
	f(x) = \frac{1}{2} (Lx\,,\,x)+b(x)
\]
where $b$ is $C^2$ and $\nabla b:\Hi\to\Hi$ is a compact map.
In the case of a nice star-shaped domain in $\R^{2n}$, the Hilbert space is $\Hi=H^{\frac{1}{2}}\times S^{2N+1}$ and the functional is given by
\begin{equation}\label{AH}
	\Ac_{H}(x,z) = -\tfrac{1}{2}\int J\dot{x}\cdot x dt - \int_{0}^{1}{H\big(t,x(t),z\big)dt}.
\end{equation}
The fact that this functional coincides with the one from equation \eqref{AH2} is a direct computation.

Fixing $L$, we denote by $\Hi^{+}$ the maximal $L$-invariant subspace on which $L$ is positive and by $\Hi^{-}$ the maximal $L$-invariant subspace on which $L$ is negative. We have $\Hi=\Hi^{+}\oplus\Hi^{-}$.
Here $\Hi$ decomposes as $\Hi=(E^{+}\oplus E^{0}\oplus E^{-})\times S^{2N+1}$ where $E^{0}\cong\R^{2n}$ is the set of constant loops. We split $E^{0}$ arbitrarily in $E^{0}=E^{0}_{+}\oplus E^{0}_{-}$ where $E^{0}_{+}\cong\R^{n}\cong E^{0}_{-}$. In the previous notation, we take $\Hi^{+}=(E^{+}\oplus E^{0}_{+})\times S^{2N+1}$ and $\Hi^{-}=(E^{-}\oplus E^{0}_{-})\times S^{2N+1}$ by extending $L$ with the matrix $\left(\begin{matrix}\Id &0\\0&-\Id\end{matrix}\right)$.
\begin{remark}
	By taking the splitting of $E^{0}$ to be given by $E^{0}_{-}=\langle x_1,\ldots,x_n\rangle$, we have that the parametrized CZ-index is equal to the relative Morse index, see \cite{AbbondandoloBook}.
\end{remark}

The functional then writes as
\begin{align}
	\Ac_{H} (x,z)&= -\tfrac{1}{2}\int_{0}^{1}{J\dot{x}\cdot x dt} - \int_{0}^{1}{H\big(t,x(t),z\big)dt}\\
	&= \tfrac{1}{2}\big(\|P_{E^+}(x)\|^{2}_{H^{\frac{1}{2}}}-\|P_{E^-}(x)\|^{2}_{H^{\frac{1}{2}}}\big) - \int_{0}^{1}{H\big(t,x(t),z\big)dt}\\
	&= \tfrac{1}{2}(Lx\,,\,x)_{\tfrac{1}{2}} \underbrace{- \tfrac{1}{2}\|P_{E^{0}_{+}}x\|^{2} + \tfrac{1}{2}\|P_{E^{0}_{-}}x\|^{2} - \int_{0}^{1}{H\big(t,x(t),z\big)dt}}_{b}.\label{b}
\end{align}
\begin{proposition}\cite[Lemma 3.4]{HZ}
	The map $b:\Hi\to\R$ from equation \eqref{b} is differentiable. Its gradient $\nabla b:\Hi\to\Hi$ is continuous and compact.
\end{proposition}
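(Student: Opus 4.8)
The plan is to show that $b$ splits into a finite-dimensional quadratic piece and a Hamiltonian term, and that each contributes either trivially or compactly to the gradient. Recall from \eqref{b} that
\[
  b(x,z) = -\tfrac{1}{2}\|P_{E^0_+}x\|^2 + \tfrac{1}{2}\|P_{E^0_-}x\|^2 - \int_0^1 H\bigl(t,x(t),z\bigr)\,dt.
\]
First I would dispatch the two quadratic summands: $P_{E^0_\pm}$ are orthogonal projections onto the finite-dimensional space $E^0\cong\R^{2n}$ of constant loops, so $x\mapsto \mp\tfrac12\|P_{E^0_\pm}x\|^2$ is smooth with gradient $\mp P_{E^0_\pm}x$, a bounded operator of finite rank, hence compact. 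The $S^{2N+1}$ factor plays no role here. So the entire content is in the term $G(x,z):=-\int_0^1 H(t,x(t),z)\,dt$.

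Next I would analyze $G$. The key point is the compactness of the Sobolev embedding $H^{1/2}(S^1,\R^{2n})\hookrightarrow L^p(S^1,\R^{2n})$ for all $p<\infty$ (in one dimension $H^{1/2}$ embeds into every $L^p$, and compactly so). Since $H$ is smooth and, by admissibility, is quadratic (plus constants) outside a compact set in the $\R^{2n}$-variable, $H$ and its $x$-derivatives satisfy uniform polynomial (indeed here at most quadratic) growth estimates in $x$, locally uniformly in $(t,z)$. This is exactly the setting of \cite[Lemma 3.4]{HZ}: one checks that $G$ is $C^1$ (in fact $C^2$) on $H^{1/2}\times S^{2N+1}$ with
\[
  dG(x,z)(\xi,\zeta) = -\int_0^1 \partial_x H(t,x(t),z)\cdot\xi(t)\,dt - \int_0^1 \partial_z H(t,x(t),z)\cdot\zeta\,dt,
\]
using the $L^p$--$L^q$ duality and the embedding to make sense of the integrals and to get continuity of $x\mapsto \partial_x H(\cdot,x(\cdot),z)$ as a map into $L^q$. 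The gradient $\nabla G(x,z)$ is then obtained by representing this differential: the $S^{2N+1}$-component is manifestly continuous, and the $H^{1/2}$-component is the composition of the continuous Nemytskii-type map $(x,z)\mapsto \partial_x H(\cdot,x(\cdot),z)\in L^q(S^1,\R^{2n})$ with the inclusion $L^q\hookrightarrow (H^{1/2})^*\cong H^{1/2}$ (via Riesz), which is \emph{compact} precisely because the embedding $H^{1/2}\hookrightarrow L^p$ is compact and hence its adjoint $L^q\hookrightarrow H^{1/2}$ is too. Continuity of $\nabla G$ follows from continuity of the Nemytskii map, which in turn uses dominated convergence together with the growth bounds on $\partial_x H$ and the $L^p$-convergence supplied by the embedding. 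Combining with the finite-rank gradients of the quadratic terms gives that $\nabla b$ is continuous and compact.

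The main obstacle, and the only place requiring genuine care, is verifying that the Nemytskii operator $x\mapsto \partial_x H(\cdot,x(\cdot),z)$ is continuous from $H^{1/2}$ into $L^q$ (for a suitable $q$ dual to some $p<\infty$) with the appropriate uniformity in $z\in S^{2N+1}$, and that this continuity upgrades to compactness of the gradient after composing with $L^q\hookrightarrow H^{1/2}$. This is where the quadratic-at-infinity hypothesis on admissible Hamiltonians is used: it bounds $|\partial_x H(t,v,z)|\le C(1+|v|)$, so $\partial_x H(\cdot,x(\cdot),z)\in L^2\subset L^q$, and bounds the second derivatives to give local Lipschitz estimates; one then invokes the standard Krasnoselskii-type continuity theorem for Nemytskii operators between $L^p$ spaces together with the compact embedding $H^{1/2}\hookrightarrow L^2$. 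Since this is verbatim the argument of \cite[Lemma 3.4]{HZ} adapted to the parametrized ($z$-dependent) and completed-at-infinity setting, I would state it as such and refer to that lemma, noting only that the extra $S^{2N+1}$-variable ranges over a compact manifold and the Hamiltonian is smooth in it, so all estimates are uniform in $z$ and the $z$-component of the gradient is trivially continuous with finite-dimensional (hence compact) image.
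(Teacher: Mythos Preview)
Your argument is correct and is precisely the classical Hofer--Zehnder proof. Note that the paper does not supply its own proof of this proposition at all: it simply records the statement with the citation \cite[Lemma 3.4]{HZ} and moves on. What you have written is an accurate expansion of that reference, with the only addition being the (harmless) observation that the extra $S^{2N+1}$-parameter is compact and finite-dimensional, so all estimates are uniform in $z$ and the $z$-component of the gradient is automatically compact. There is nothing to compare; your sketch \emph{is} the argument the paper is pointing to.
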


Note that not all element of $H^{\frac{1}{2}}$ can be represented by a continuous function.
\begin{proposition}\cite[Proposition 3.4]{HZ}
	Let $s>\frac{1}{2}$. If $x\in H^{s}(S^1,\R^{2n})$, then $x\in C^0(S^1,\R^{2n})$. Moreover, there is a constant $c$, depending on $s$, such that
	\[
		\|x\|_{C^{0}}\leq c\|x\|_{H^{s}},\qquad\forall x\in H^{s}(S^{1},\R^{2n}).
	\]
\end{proposition}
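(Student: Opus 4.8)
The plan is to prove this via Fourier series and the Cauchy--Schwarz inequality, which is the classical argument for the Sobolev embedding $H^s(S^1)\embbed C^0(S^1)$.

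First I would expand $x\in H^s(S^1,\R^{2n})$ as a Fourier series $x(t)=\sum_{k\in\Z}x_ke^{2\pi ikt}$, so that by definition $\norm{x}_{H^s}^2$ is comparable to $\sum_{k\in\Z}(1+\abs{k}^2)^s\abs{x_k}^2$. The key observation is that a function with absolutely summable Fourier coefficients has a continuous representative whose sup-norm is controlled by the $\ell^1$-norm of those coefficients: the partial sums $\sum_{\abs{k}\le M}x_ke^{2\pi ikt}$ are trigonometric polynomials, hence continuous, and if $\sum_{k\in\Z}\abs{x_k}<\infty$ they converge uniformly, so their limit is a continuous function agreeing with $x$ almost everywhere; we identify $x$ with this representative. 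In particular $\norm{x}_{C^0}=\sup_t\bigl|\sum_{k\in\Z}x_ke^{2\pi ikt}\bigr|\le\sum_{k\in\Z}\abs{x_k}$.

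Thus the whole statement reduces to the estimate $\sum_{k\in\Z}\abs{x_k}\le c\,\norm{x}_{H^s}$, which I would obtain from Cauchy--Schwarz by inserting the weight $(1+\abs{k}^2)^{s/2}$:
\[
\sum_{k\in\Z}\abs{x_k}=\sum_{k\in\Z}\abs{x_k}(1+\abs{k}^2)^{s/2}\cdot(1+\abs{k}^2)^{-s/2}\le\Bigl(\sum_{k\in\Z}(1+\abs{k}^2)^s\abs{x_k}^2\Bigr)^{1/2}\Bigl(\sum_{k\in\Z}(1+\abs{k}^2)^{-s}\Bigr)^{1/2}.
\]
The first factor is, up to a constant, $\norm{x}_{H^s}$. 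The second factor is the only place where the hypothesis $s>\half$ enters: the series $\sum_{k\in\Z}(1+\abs{k}^2)^{-s}$ converges precisely when $2s>1$ (by comparison with $\sum_{k\ge 1}k^{-2s}$), and its square root is the constant $c=c(s)$ appearing in the statement.

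There is no real obstacle here — the result is the textbook Sobolev embedding on the circle — so rather than an obstacle the point that most deserves care is the distinction between the $L^2$-equivalence class $x$ and an honest continuous function, which is precisely what the uniform convergence of the Fourier series settles, as above.
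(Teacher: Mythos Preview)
Your argument is correct and is the standard Cauchy--Schwarz proof of the Sobolev embedding on the circle. The paper itself does not supply a proof of this proposition but simply cites \cite[Proposition~3.4]{HZ}, where essentially the same Fourier-series argument is given, so there is nothing further to compare.
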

Recall that from \cite[Proposition 3.3]{HZ}, for $t>s\geq0$, the inclusion map $I:H^{t}(S^{1},\R^{2n})\to H^{s}(S^{1},\R^{2n})$ is compact.

The following inclusion $j$, and its adjoint $j^{*}$, will play a key role in the following.
\begin{align*}
	j:H^{\frac{1}{2}}(S^{1},\R^{2n})&\to H^{0}(S^{1},\R^{2n})=L^{2}(S^{1},\R^{2n})\\
	j^{*}:L^{2}(S^{1},\R^{2n})&\to H^{\frac{1}{2}}(S^{1},\R^{2n})
\end{align*}
\begin{proposition}\cite[Proposition 3.5]{HZ}
	\[
		j^{*}\big(L^{2}(S^{1},\R^{2n})\big)\subset H^{1}(S^{1},\R^{2n})\quad\textrm{and}\quad \|j^{*}(y)\|_{H^{1}}\leq\|y\|_{L^{2}}.
	\]
\end{proposition}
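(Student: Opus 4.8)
The plan is a direct computation with Fourier series: there is nothing deep here, only careful bookkeeping of the inner-product conventions. First I would record that $j$ is bounded with norm $\le 1$. For $x=\sum_{k\in\Z}x_ke^{2\pi ikt}\in H^{1/2}(S^1,\R^{2n})$ one has
\[
\|jx\|_{L^2}^2=\sum_{k\in\Z}|x_k|^2\;\le\;|x_0|^2+2\pi\sum_{k\neq 0}|k|\,|x_k|^2=\|x\|_{H^{1/2}}^2 ,
\]
since $2\pi|k|\ge 1$ whenever $k\neq 0$. Consequently $j^{*}\co L^2(S^1,\R^{2n})\to H^{1/2}(S^1,\R^{2n})$ is a well-defined bounded operator, characterized by $\langle j^{*}y,x\rangle_{H^{1/2}}=\langle y,x\rangle_{L^2}$ for all $x\in H^{1/2}$.

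The next step is to identify $j^{*}y$ explicitly in Fourier coordinates. Writing $w:=j^{*}y=\sum_k w_ke^{2\pi ikt}$ and $y=\sum_k y_ke^{2\pi ikt}$, and substituting the Fourier expressions for $\langle\cdot,\cdot\rangle_{H^{1/2}}$ and $\langle\cdot,\cdot\rangle_{L^2}$ into the defining identity, I would match coefficients to obtain
\[
w_0=y_0,\qquad w_k=\frac{1}{2\pi|k|}\,y_k\quad(k\neq 0).
\]
The reality relation $y_{-k}=\overline{y_k}$ is inherited by $w$ because $|k|$ is even in $k$, so $w$ is genuinely $\R^{2n}$-valued.

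Finally, the membership in $H^1$ and the norm bound are immediate, since $\|y\|_{L^2}^2=\sum_{k\in\Z}|y_k|^2<\infty$. With the normalization $\|x\|_{H^1}^2=|x_0|^2+\|\dot x\|_{L^2}^2=|x_0|^2+\sum_{k\neq 0}(2\pi|k|)^2|x_k|^2$ --- the one consistent with the convention used above for $\|\cdot\|_{H^{1/2}}$ --- one computes
\[
\|w\|_{H^1}^2=|y_0|^2+\sum_{k\neq 0}(2\pi|k|)^2\frac{|y_k|^2}{(2\pi|k|)^2}=|y_0|^2+\sum_{k\neq 0}|y_k|^2=\|y\|_{L^2}^2 ,
\]
so the partial sums of the series for $w$ form a Cauchy sequence in $H^1$; hence $w\in H^1$, i.e.\ $j^{*}\bigl(L^2(S^1,\R^{2n})\bigr)\subset H^1(S^1,\R^{2n})$, and $\|j^{*}y\|_{H^1}\le\|y\|_{L^2}$. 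I do not anticipate any real obstacle here; the only place requiring attention is the tracking of the $2\pi$-factors, which is exactly what pins the constant in the inequality to $1$. Conceptually, this is precisely what is needed to see that the $j^{*}\circ(\cdots)\circ j$-type terms occurring in $\nabla b$ are smoothing, and hence compact after composition with the compact inclusion $H^1\hookrightarrow H^{1/2}$.
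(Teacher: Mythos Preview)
Your argument is correct: the explicit Fourier computation of $j^*$ via the adjoint identity, followed by the direct evaluation of the $H^1$-norm, is exactly the standard proof, and with the stated normalizations you in fact obtain equality $\|j^*y\|_{H^1}=\|y\|_{L^2}$, which of course implies the claimed inequality. The paper does not supply its own proof of this proposition---it simply cites \cite[Proposition~3.5]{HZ}---and the argument there is precisely this Fourier-coefficient calculation, so there is nothing to compare beyond noting that your write-up matches the source.
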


\begin{proposition}\cite{HZ}
	The Hamiltonian action functional $\Ac_{H}:\Hi\to\R$ is a smooth functional. Its gradient (on $H^{\frac{1}{2}}$), with respect to the inner product on $H^{\frac{1}{2}}$ is given by
	\[
		\nabla_{\half}\Ac_{H_z}(x)= -P_{E^+}(x)+P_{E^{-}}(x)+j^{*}\nabla H_z\big(\cdot,x(\cdot)\big).
	\]
	Moreover, $\nabla_{\half}\Ac_{H_z}$ is Lipschitz continuous on $\Hi$ with uniform Lipschitz constant. Its Jacobian is given by
	\[
		\nabla^{2}_{\half}\Ac_{H_z}(x)= -P_{E^+}+P_{E^{-}}+j^{*}\nabla^{2} H_z\big(\cdot,x(\cdot)\big).
	\]
\end{proposition}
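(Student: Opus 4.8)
The plan is to fix $z$ (so that $H_z\in\Hstd$) and split $\Ac_{H_z}$ into its bounded quadratic part and a lower-order nonlinear term, doing all the analysis at the $L^2$-level, where the Hamiltonian term is transparent, and then transferring everything up to $H^{\frac12}$ via the inclusion $j\co H^{\frac12}(S^1,\R^{2n})\to L^2(S^1,\R^{2n})$ and its adjoint $j^*$. Concretely, write
\[
\Ac_{H_z}(x)=a(x)-\mathcal B_z(x),\qquad a(x):=\tfrac12\bigl(\|P_{E^+}x\|_{H^{\frac12}}^2-\|P_{E^-}x\|_{H^{\frac12}}^2\bigr),\qquad \mathcal B_z(x):=\int_0^1 H\bigl(t,x(t),z\bigr)\,dt,
\]
which agrees with the expression in \eqref{AH} by the computation indicated there. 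The form $a$ is a bounded symmetric bilinear form on $H^{\frac12}$, hence real-analytic, and its differential is computed by self-adjointness of the orthogonal projections, contributing the two linear terms $\mp P_{E^\pm}x$ to the gradient (with the overall sign convention of \cite{HZ}).

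First I would establish smoothness and the gradient formula together. Since $H$ is smooth and, by Definition~\ref{def:Hstd}(2) and Definition~\ref{def:aph}(i), equals a fixed $z$-independent quadratic form outside a compact set, all $x$-derivatives of $H_z$ are globally bounded; differentiation under the integral sign is then justified by dominated convergence and yields $d\mathcal B_z(x)[\xi]=\int_0^1\nabla H_z\bigl(t,x(t)\bigr)\cdot\xi(t)\,dt$ — this is exactly \cite[Lemma 3.4]{HZ} for the companion functional $b$ of \eqref{b}, and iterating the same argument (each further differentiation only sees the next, still bounded, derivative of $H$) shows $\mathcal B_z$, hence $\Ac_{H_z}$, is $C^\infty$. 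To read off the gradient, I would rewrite this integral as an $L^2$-pairing and pass $j$ to its adjoint,
\[
\int_0^1\nabla H_z\bigl(t,x(t)\bigr)\cdot\xi(t)\,dt=\bigl\langle\nabla H_z(\cdot,x(\cdot)),\,j\xi\bigr\rangle_{L^2}=\bigl\langle j^*\nabla H_z(\cdot,x(\cdot)),\,\xi\bigr\rangle_{H^{\frac12}},
\]
the function $\nabla H_z(\cdot,x(\cdot))$ lying in $L^2$ because $\nabla H_z$ has linear growth and $x\in L^2$; collecting the two contributions and invoking the Riesz representation theorem gives the stated formula for $\nabla_{\half}\Ac_{H_z}(x)$.

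Next, for the uniform Lipschitz estimate, the linear part $x\mapsto -P_{E^+}x+P_{E^-}x$ has operator norm $1$ and is therefore $1$-Lipschitz, while for the nonlinear part I would chain the regularity gain of $j^*$ with the Lipschitz bound on $\nabla H_z$: by \cite[Proposition 3.5]{HZ},
\[
\bigl\|j^*\nabla H_z(\cdot,x(\cdot))-j^*\nabla H_z(\cdot,y(\cdot))\bigr\|_{H^{\frac12}}\le\bigl\|j^*\bigl(\nabla H_z(\cdot,x(\cdot))-\nabla H_z(\cdot,y(\cdot))\bigr)\bigr\|_{H^1}\le\bigl\|\nabla H_z(\cdot,x(\cdot))-\nabla H_z(\cdot,y(\cdot))\bigr\|_{L^2},
\]
and since $D^2H_z$ is bounded by a constant $C$ that is independent of $z$, the map $\nabla H_z$ is $C$-Lipschitz in the $\R^{2n}$-variable; combined with the continuous embedding $H^{\frac12}\hookrightarrow L^2$ this bounds the right-hand side by $C'\|x-y\|_{H^{\frac12}}$ with $C'$ independent of $z$, giving a uniform Lipschitz constant.

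Finally, I would differentiate the gradient once more: the linear part reproduces itself, and the derivative at $x$ of $x\mapsto j^*\nabla H_z(\cdot,x(\cdot))$ in the direction $\xi$ is $j^*\bigl(\nabla^2 H_z(\cdot,x(\cdot))\,\xi(\cdot)\bigr)$, where $\nabla^2 H_z(\cdot,x(\cdot))$ is an $L^\infty$ family of symmetric matrices because $D^2H_z$ is globally bounded; this is the asserted formula for $\nabla^2_{\half}\Ac_{H_z}(x)$, understood as $j^*$ composed with multiplication by $\nabla^2 H_z(\cdot,x(\cdot))$ composed with $j$. Since $H^{\frac12}\hookrightarrow L^2$ is compact by \cite[Proposition 3.3]{HZ}, this operator is compact, so $\nabla^2_{\half}\Ac_{H_z}(x)$ is a compact perturbation of the invertible self-adjoint operator obtained from $-P_{E^+}+P_{E^-}$ after the splitting of $E^0$ used in \eqref{b} — precisely the structure placing $\Ac_{H_z}$ in the class $\FF(L)$ of \S\ref{sec:generalmorse}. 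I do not expect a serious obstacle anywhere: the one genuinely analytic ingredient is the regularity-gain property $j^*(L^2)\subset H^1$ together with its norm bound, which is what upgrades every $L^2$-level estimate above to $H^{\frac12}$, and that is \cite[Proposition 3.5]{HZ}.
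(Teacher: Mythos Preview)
Your argument is correct and follows essentially the standard proof in \cite{HZ}, which the paper merely cites rather than reproves: split the functional into its quadratic and nonlinear parts, compute the gradient of the Hamiltonian term at the $L^2$-level and transfer it to $H^{\frac12}$ via $j^*$, and use the global bound on $D^2H_z$ (from the quadratic behavior at infinity) together with the regularity gain of $j^*$ for the Lipschitz estimate. The paper itself provides no independent proof of this proposition.
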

We denote by $X\eqdef-\nabla_{\half}\Ac_{H}$ the gradient vector field of the Hamiltonian action functional. Assume that $(x,z)\in\Hi$ is a critical point of the action functional; i.e.\ $\nabla_{\half}\Ac_{H_z}(x)=0$. Then $x\in C^{\infty}(S^{1},\R^{2n})$. Moreover it solves the Hamiltonian equation
\[
	\dot{x}(t)=J\nabla H_z\big(x(t)\big).
\]
\begin{lemma}\cite[Lemma 3.7]{HZ}
	The flow of $\dot{x}=X(x)$ is globally defined, maps bounded sets to bounded sets and admits the representation
	\[
		(x,z)\cdot t=e^tx^{-}+x^{0}+e^{-t}x^{+}+K(t,x,z)
	\]
	where $K:\R\times\Hi\to\Hi$ is continuous and maps bounded sets in precompact sets.
\end{lemma}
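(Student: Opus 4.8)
The plan is to treat the gradient equation $\dot x=-\nabla_{\half}\Ac_{H_z}(x)$ as a semilinear ODE on $\Hi$ of ``hyperbolic linear part $+$ compact nonlinearity'' type and to run the classical variation-of-constants argument, following \cite[Lemma 3.7]{HZ}. The first step is bookkeeping: writing $x=x^{+}+x^{0}+x^{-}$ for the decomposition along $E^{+}\oplus E^{0}\oplus E^{-}$, one reads off from the formula for $\nabla_{\half}\Ac_{H_z}$ that
\[
	X=-\nabla_{\half}\Ac_{H_z}\qquad\text{has the form}\qquad X(x)=\Lambda x+F(x),
\]
where $\Lambda$ is the bounded Fourier multiplier acting as $-\Id$ on $E^{+}$, as $+\Id$ on $E^{-}$ and as $0$ on $E^{0}$ (the sign being the one that yields $e^{t\Lambda}x=e^{-t}x^{+}+x^{0}+e^{t}x^{-}$, which is where the asserted exponents come from; the finite-rank contributions of the arbitrary splitting of $E^{0}$ in $b$ cancel the $E^{0}$-part of the quadratic form), and $F(x)=j^{*}\nabla_{x}H_{z}(\cdot,x(\cdot))$ is the nonlinear part. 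Since $S^{2N+1}$ is compact, the $S^{2N+1}$-component of the flow, whatever its precise form, contributes a continuous, bounded, precompact term and plays no role in the estimates; I therefore keep $z$ fixed from now on.

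For ``globally defined'' and ``maps bounded sets to bounded sets'' I would use only the Lipschitz estimate already recorded, namely that $\nabla_{\half}\Ac_{H_z}$, hence $X$, is globally Lipschitz on $\Hi$ with a constant independent of $z$. The Picard--Lindel\"of theorem together with patching then gives a flow $\phi^{t}$ defined for all $t\in\R$, and a global Lipschitz constant $\ell$ forces linear growth $\|X(y)\|\le\|X(0)\|+\ell\|y\|$, so that $\tfrac{d}{dt}\|\phi^{t}(x)\|\le\|X(0)\|+\ell\|\phi^{t}(x)\|$ and Gronwall's inequality bounds $\|\phi^{t}(x)\|$ on any fixed interval $[-T,T]$ in terms of $T$ and $\|x\|$; hence for $t$ in a bounded set, $\phi^{t}$ maps bounded sets to bounded sets.

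The representation is then formal. As $\Lambda$ is bounded, Duhamel's formula gives $\phi^{t}(x)=e^{t\Lambda}x+K(t,x)$ with
\[
	K(t,x)\eqdef\int_{0}^{t}e^{(t-s)\Lambda}F\bigl(\phi^{s}(x)\bigr)\,ds,
\]
and $e^{t\Lambda}x=e^{-t}x^{+}+x^{0}+e^{t}x^{-}$ is exactly the claimed shape. The only real content, and the step I expect to be the main obstacle, is the precompactness of $K$; this hinges on a regularity gain. On a bounded set of $x$'s the function $\nabla_{x}H_{z}(\cdot,x(\cdot))$ is bounded in $L^{2}$, because $H$ is admissible (so $\nabla_{x}H_{z}$ has at most linear growth, uniformly in $\theta$ and $z$) and $H^{1/2}\hookrightarrow L^{2}$; then the recalled bound $\|j^{*}y\|_{H^{1}}\le\|y\|_{L^{2}}$ shows that $F$ maps bounded subsets of $\Hi$ into bounded subsets of $H^{1}(S^{1},\R^{2n})$. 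Since $e^{(t-s)\Lambda}$ is a Fourier multiplier of operator norm at most $e^{|t-s|}$ on $H^{1}$, the integral defining $K$ stays in a bounded subset of $H^{1}$ as $(t,x)$ runs over a bounded set, and the compactness of the inclusion $H^{1}\hookrightarrow H^{1/2}$ upgrades this to precompactness in $\Hi$. Continuity of $K$ is the final point: it follows from joint continuity of $\phi$ (continuous dependence on initial conditions for a globally Lipschitz field), continuity of $F$ (which is, up to the harmless finite-rank terms, the map $\nabla b$, already known to be continuous and compact), norm-continuity of $(t,s)\mapsto e^{(t-s)\Lambda}$, and the local uniform bounds above. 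Assembling these four points proves the lemma.
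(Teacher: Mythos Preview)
The paper does not give a proof of this lemma; it simply quotes \cite[Lemma~3.7]{HZ}. Your argument is exactly the classical Hofer--Zehnder variation-of-constants proof: split $X$ into the bounded diagonal operator $\Lambda$ and the compact remainder $F=-j^{*}\nabla H_{z}$, use global Lipschitzness for global existence and Gronwall for boundedness, and then obtain precompactness of the Duhamel integral from the smoothing $j^{*}\colon L^{2}\to H^{1}$ together with the compact inclusion $H^{1}\hookrightarrow H^{1/2}$. This is correct and is precisely the approach the citation points to; there is nothing to compare against in the present paper.

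One cosmetic remark: with the paper's sign conventions ($\nabla_{\half}\Ac_{H_{z}}(x)=-P_{E^{+}}x+P_{E^{-}}x+j^{*}\nabla H_{z}$ and $X=-\nabla_{\half}\Ac_{H}$), the linear part of $X$ is $P_{E^{+}}-P_{E^{-}}$, so to get the stated exponents $e^{-t}x^{+}$ and $e^{t}x^{-}$ there is a sign discrepancy between the paper's gradient formula and the lemma as written. This is a convention/typo issue in the paper, not a flaw in your argument; your parenthetical about ``the sign being the one that yields\ldots'' shows you already noticed the mismatch.
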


\subsubsection{The conditions (M.1)--(M.5) are satisfied by $\Ac_{H}$}
Condition (M.1) is satisfied and the decomposition $\Hi=\Hi^+\oplus\Hi^-$ is as in \hyperref[sec:generalmorse]{\S\ref*{sec:generalmorse}}. To ensure the Morse property, we have to pick a generic Hamiltonian
\begin{proposition}[\cite{AbMa2}]
	There is a residual set (in the sense of Baire) $\Hi_{reg}\subset C^\infty(S^1\times\R^{2n}\times S^{2N+1},\R)$ of Hamiltonians such that the negative $H^{\frac{1}{2}}$-gradient $X$ of $\Ac_H$ is a Morse vector field for evey $H\in\Hi_{reg}$. In particular, the set of critical points of $\Ac_H$ is a finite set.
\end{proposition}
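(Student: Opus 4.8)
The plan is to derive this from the Sard--Smale parametric transversality theorem, in the form established for functionals of class $\FF(L)$ by Abbondandolo and Majer in \cite{AbMa2}. First I would reinterpret the conclusion in terms of periodic orbits: a pair $(x,z)\in\Hi$ is a zero of $X=-\nabla_{\half}\Ac_H$ exactly when $\nabla_{\half}\Ac_{H_z}(x)=0$ and the $S^{2N+1}$-component of the gradient vanishes, so --- bootstrapping via $j^{*}$ as recalled above --- such an $x$ is a smooth $1$-periodic orbit of $X_{H_z}$ and $z$ is a critical point of $z'\mapsto\int_0^1 H(\theta,x(\theta),z')\,d\theta$ on $S^{2N+1}$. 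Moreover $X$ is Morse at such a zero precisely when the Hessian $\nabla^2_{\half}\Ac_H(x,z)$ is invertible; since $\Ac_H\in\FF(L)$ this Hessian is $L$ plus a compact operator (the contribution of $j^{*}\nabla^2H_z(\cdot,x(\cdot))$ factors through the compact inclusion $H^1\hookrightarrow H^{\frac{1}{2}}$), hence is Fredholm of index $0$, so invertibility is equivalent to injectivity, which is equivalent to nondegeneracy of $(x,z)$ as a parametrized periodic orbit (the linearized $1$-periodic return map of $x$ has no eigenvalue $1$, together with nondegeneracy in the $z$-direction). Thus the claim reduces to: for generic $H$, every critical point of $\Ac_H$ is nondegenerate.

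Next I would run the parametric argument. Fix a Banach space $\mathcal{P}$ of perturbations of a given admissible Hamiltonian --- $C^k$ (or Floer-type $C^\eps$) functions supported in a fixed compact region, so that the behaviour \eqref{eqn:limitingslope}--\eqref{eqn:limitingslope2} at infinity, and with it the Palais--Smale property and the a priori action bounds used below, are unaffected --- and form the universal zero set $\mathcal{Z}=\{(x,z,H)\mid\nabla_{\half}\Ac_H(x,z)=0\}$, regarded as the zero set of a Fredholm section of a Banach bundle over $\Hi\times\mathcal{P}$. One checks this section is transverse to the zero section: the constant orbits correspond to critical points of the autonomous, $C^2$-small function $H|_{S^1\times X}$, made nondegenerate by a $C^2$-small Morse perturbation on $X$; each non-constant orbit lies in $[0,\rho_0]\times Y$ close to a closed Reeb orbit of period $<\beta$, and perturbing $H$ freely there (equivalently, perturbing the contact form on $Y$, and varying the $z$-dependence of $H$ for the $S^{2N+1}$-directions) yields surjectivity of the linearization of the section by a unique-continuation argument along the orbit. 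Granting transversality, $\mathcal{Z}$ is a Banach manifold, the projection $\mathcal{Z}\to\mathcal{P}$ is Fredholm of index $0$, and Sard--Smale gives a residual set of $H\in\mathcal{P}$ whose fibre --- which is $\op{crit}(\Ac_H)$ --- is a $0$-dimensional manifold, i.e.\ all critical points are nondegenerate and isolated; since the perturbations need not respect the $S^1$-action, the critical set is genuinely discrete rather than a union of critical circles. A Taubes intersection-of-residual-sets argument upgrades this from $C^k$ to $C^\infty$, producing $\Hi_{reg}$.

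For the ``in particular'' clause, note that nondegenerate critical points are isolated while the action values of all critical points lie in a bounded interval --- for a constant orbit the action is $-H(x)\in(-\eps,0)$ by \eqref{eqn:HSpec}, and for a non-constant orbit it is close to a Reeb period and hence $<\beta$. Were $\op{crit}(\Ac_H)$ infinite, a sequence of distinct critical points would have bounded action and vanishing gradient, hence a convergent subsequence by the Palais--Smale condition (which $\Ac_H$ satisfies, as shown above), contradicting isolation; so $\op{crit}(\Ac_H)$ is finite.

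The step I expect to be the main obstacle is achieving transversality compatibly with the structural constraints: one must take $\mathcal{P}$ rich enough to perturb away degeneracy at \emph{every} critical point while preserving the behaviour at infinity (so that Palais--Smale and the action bounds persist), and one must carry out the functional analysis that turns $\mathcal{Z}$ into a genuine Banach manifold with a $C^1$ Fredholm section --- which is exactly what is done in \cite{AbMa2}.
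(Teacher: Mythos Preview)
The paper does not give its own proof of this proposition: it is stated with attribution to \cite{AbMa2} and used as a black box (the same result is invoked again in \S\ref{sec:trans}). So there is nothing in the paper to compare your argument against beyond the citation itself.

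That said, your sketch is the standard route and matches what Abbondandolo--Majer do: interpret zeros of $X$ as $1$-periodic orbits, observe that the Hessian is $L$ plus compact hence Fredholm of index $0$, set up the universal critical set as the zero set of a Fredholm section over $\Hi\times\mathcal{P}$, verify surjectivity of the universal linearization by perturbing $H$ freely near each orbit, apply Sard--Smale, and upgrade to $C^\infty$ by intersecting over $k$. Your derivation of finiteness from Palais--Smale plus boundedness of the action values is also the correct mechanism.

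One point worth flagging: as you note, the genericity here is within the full space $C^\infty(S^1\times\R^{2n}\times S^{2N+1},\R)$, \emph{not} within the $S^1$-invariant admissible Hamiltonians of Definition~\ref{def:aph}. For an $S^1$-invariant $H$ the critical set of $\Ac_H$ consists of $S^1$-orbits and $\Ac_H$ is at best Morse--Bott, so the literal conclusion ``finite set of critical points'' would fail. Your remark that ``the perturbations need not respect the $S^1$-action'' is precisely the point; just be aware that this creates a mild tension with how the complex $CM^{S^1}_*(H)$ is later used, which the paper does not spell out.
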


To ensure transversality (condition (M.4)), we need to perturb the vector field $X=-\nabla_{\half}\Ac_{H}$ by adding a small compactly supported vector field $\overline{X}$. We do it this way rather than following \cite{AM} in preparation for transversality for hybrid-type curves \hyperref[sec:isomorphism]{\S\ref*{sec:isomorphism}}.
Let $\mathcal{K}(\Hi)\subset C^3_ b(\Hi)$ be the closed subspace of all $C^3$-vector fields which are compact and bounded on $\Hi$. We choose a $C^1$-function $g:\Hi\to\R^+$ satisfying
\begin{enumerate}[leftmargin=*]
	\item $g(p)>0$ everywhere else; i.e. for all $p\in\Hi\setminus\op{Crit}\Ac_H$,
	\item $g(p)\leq\frac{1}{2}\|\nabla_{\frac{1}{2}}\Ac_H(p)\|_{H^{\frac{1}{2}}}$ for all $p\in\Hi$.
\end{enumerate}
In particular, we have $g(x)=0$ for all $x\in\op{Crit}\Ac_H$.
We consider the subset of vector fields
\[
	\mathcal{K}_g\eqdef\left\{\overline{X}\in\mathcal{K}(\Hi)\,|\,\exists c>0\textrm{ such that }\|\overline{X}_p\|_{H^{\frac{1}{2}}}\leq cg(p)\quad\forall p\in\Hi\right\}.
\]
This set is a Banach space when equipped with the following norm:
\[
	\|\overline{X}\|_{\mathcal{K}_g}\eqdef\sup_{p\in\Hi\setminus\op{Crit}(\Ac_H)}\frac{\|\overline{X}_p\|_{H^{\frac{1}{2}}}}{g(p)}+\|\nabla\overline{X}\|_{C^2}.
\]
We denote the open unit ball in $\mathcal{K}_g$, with respect to the above norm, by $\mathcal{K}_{g,1}$. It is a Banach manifold with trivial tangent bundle.
\begin{lemma}
	Let $\overline{X}\in \mathcal{K}_{g,1}$ and let $\widetilde{X}\eqdef-\nabla_{\frac{1}{2}}\Ac_H+\overline{X}$. Then
	\begin{enumerate}[leftmargin=*]
		\item The singular points of $\widetilde{X}$ are the critical points of the action functional
			\[
				\op{sing}(\widetilde{X})=\op{Crit}(\Ac_H).
			\]
		\item For all $x\in\op{Crit}(\Ac_H)$, we have
			\[
				D\widetilde{X}(x)=-D^2\Ac_H(x).
			\]
		\item The action functional is a Lyapunov function for $\widetilde{X}$; i.e.
			\[
				D\Ac_H(p)\big(\widetilde{X}(p)\big)<0\qquad\textrm{for all }p\in\Hi\setminus\op{Crit}(\Ac_H).
			\]
	\end{enumerate}
\end{lemma}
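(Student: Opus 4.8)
The plan is to read off all three assertions directly from the two inequalities defining $g$ and the norm on $\mathcal{K}_g$, with essentially no further ingredients. I will use repeatedly the following two facts. \textbf{(a)} For $\overline{X}\in\mathcal{K}_{g,1}$ one has $\|\overline{X}_p\|\le\|\overline{X}\|_{\mathcal{K}_g}\,g(p)<g(p)\le\tfrac{1}{2}\|\nabla_{\frac{1}{2}}\Ac_H(p)\|$ for every $p\in\Hi\setminus\op{Crit}(\Ac_H)$, while $\overline{X}_p=0$ for $p\in\op{Crit}(\Ac_H)$ (there $g(p)=0$ and $\|\overline{X}_p\|\le c\,g(p)$ for some $c>0$). \textbf{(b)} Since $\nabla_{\frac{1}{2}}\Ac_H$ is the gradient of $\Ac_H$ for the chosen metric, $D\Ac_H(p)(v)=\langle\nabla_{\frac{1}{2}}\Ac_H(p),v\rangle$ for all $v$.

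For part~(1) I would prove both inclusions. If $p\in\op{Crit}(\Ac_H)$ then $\nabla_{\frac{1}{2}}\Ac_H(p)=0$ and, by (a), $\overline{X}_p=0$, so $\widetilde{X}(p)=0$; hence $\op{Crit}(\Ac_H)\subseteq\op{sing}(\widetilde{X})$. Conversely, if $p\notin\op{Crit}(\Ac_H)$, the reverse triangle inequality together with (a) gives $\|\widetilde{X}(p)\|\ge\|\nabla_{\frac{1}{2}}\Ac_H(p)\|-\|\overline{X}_p\|>\tfrac{1}{2}\|\nabla_{\frac{1}{2}}\Ac_H(p)\|>0$, so $p$ is not a zero of $\widetilde{X}$; hence $\op{sing}(\widetilde{X})=\op{Crit}(\Ac_H)$. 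For part~(3) I would use the same estimate: by (b), $D\Ac_H(p)\bigl(\widetilde{X}(p)\bigr)=-\|\nabla_{\frac{1}{2}}\Ac_H(p)\|^2+\langle\nabla_{\frac{1}{2}}\Ac_H(p),\overline{X}_p\rangle$, and for $p\notin\op{Crit}(\Ac_H)$ the Cauchy--Schwarz inequality together with $\|\overline{X}_p\|\le\tfrac{1}{2}\|\nabla_{\frac{1}{2}}\Ac_H(p)\|$ bounds the second term by $\tfrac{1}{2}\|\nabla_{\frac{1}{2}}\Ac_H(p)\|^2$, so $D\Ac_H(p)\bigl(\widetilde{X}(p)\bigr)\le-\tfrac{1}{2}\|\nabla_{\frac{1}{2}}\Ac_H(p)\|^2<0$.

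The step I expect to be the crux is part~(2). Writing $D\widetilde{X}(x)=-D^2\Ac_H(x)+D\overline{X}(x)$, it suffices to show that $D\overline{X}(x)=0$ at each $x\in\op{Crit}(\Ac_H)$. Here I would exploit that $g$ is \emph{nonnegative} and \emph{$C^1$} with $g(x)=0$: thus $x$ is a global minimum of $g$, so $\nabla g(x)=0$, and therefore $g(p)=o(\|p-x\|)$ as $p\to x$. Combined with $\|\overline{X}_p\|\le c\,g(p)$ this gives $\|\overline{X}_p\|=o(\|p-x\|)$; since $\overline{X}$ is $C^3$ with $\overline{X}_x=0$, its first-order Taylor expansion $\overline{X}_p=D\overline{X}(x)(p-x)+o(\|p-x\|)$ then forces $D\overline{X}(x)=0$. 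The subtlety here, and the reason the construction works, is precisely that $g$ is required to be $C^1$ (hence vanishes to first order on $\op{Crit}(\Ac_H)$): this is why $\mathcal{K}_g$ was set up by comparing $\|\overline{X}_p\|$ to $g(p)$ rather than directly to $\|\nabla_{\frac{1}{2}}\Ac_H(p)\|$, which near a nondegenerate critical point is only comparable to $\|p-x\|$ and would not yield the $o(\|p-x\|)$ bound needed to kill $D\overline{X}(x)$.
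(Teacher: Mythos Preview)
Your proof is correct, and the paper itself does not supply a proof of this lemma (it is stated and then immediately followed by the Morse--Smale theorem). The argument you give is the expected one: parts~(1) and~(3) are forced by the inequality $\|\overline{X}_p\|<g(p)\le\tfrac{1}{2}\|\nabla_{\frac{1}{2}}\Ac_H(p)\|$ for $\overline{X}\in\mathcal{K}_{g,1}$, and your treatment of part~(2)---using that $g$ is $C^1$ and vanishes at a minimum, hence $g(p)=o(\|p-x\|)$, to kill the linear term in the Taylor expansion of $\overline{X}$---is exactly the point of introducing $g$ rather than bounding $\overline{X}$ directly by the gradient norm. Your closing remark about why comparing to $\|\nabla_{\frac{1}{2}}\Ac_H(p)\|$ would fail (it is only $O(\|p-x\|)$ near a nondegenerate critical point) is a correct and useful explanation of the construction.

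One cosmetic point: in your fact~(a) you write $\|\overline{X}_p\|\le\|\overline{X}\|_{\mathcal{K}_g}\,g(p)$; this is true because the $\mathcal{K}_g$-norm dominates the sup of $\|\overline{X}_p\|/g(p)$ (the $\|\nabla\overline{X}\|_{C^2}$ summand only helps), but you might state that explicitly to avoid the impression that $\|\overline{X}\|_{\mathcal{K}_g}$ \emph{equals} that sup.
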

\begin{theorem}
	There is a residual subset $\mathcal{K}_{reg}\subset\mathcal{K}_{g,1}$ of compact vector fields $\overline{X}$ such that the perturbed vector field $\widetilde{X}\eqdef-\nabla_{\frac{1}{2}}\Ac_H+\overline{X}$ fulfills the Morse-Smale condition up to order 2.
\end{theorem}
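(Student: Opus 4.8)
The plan is to run the Sard--Smale scheme of Abbondandolo--Majer, adapted to perturbations by compact vector fields instead of perturbations of the Hamiltonian (this is precisely the point of working with $\mathcal{K}_g$ rather than varying $H$, so that the same argument will later apply to hybrid trajectories in Section~\ref{sec:isomorphism}). The Morse property for $\widetilde X$ is already in hand: by the preceding lemma $\op{sing}(\widetilde X)=\op{Crit}(\Ac_H)$ and $D\widetilde X(x)=-D^2\Ac_H(x)$ at each rest point, so choosing $H$ in the residual set $\Hi_{reg}$ makes all these linearizations invertible, and $\op{Crit}(\Ac_H)$ is finite. It therefore remains to arrange transversality of $W^u(x)\cap W^s(y)$ for each pair of critical points with $\op{ind}_{\Hi^-}(x)-\op{ind}_{\Hi^-}(y)\le 2$; since there are finitely many such pairs and a finite intersection of residual subsets of $\mathcal{K}_{g,1}$ is residual, it suffices to produce, for a fixed pair $(x,y)$, a residual set of $\overline X$ achieving transversality for that pair.

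First I would form the universal moduli space $\mathcal{M}(x,y)$ of pairs $(u,\overline X)$ with $\overline X\in\mathcal{K}_{g,1}$ and $u\co\R\to\Hi$ a flow line of $\widetilde X=-\nabla_{\frac{1}{2}}\Ac_H+\overline X$ from $x$ to $y$, taken in a space of paths converging exponentially at the ends at a rate controlled by the spectral gaps of $D^2\Ac_H(x)$ and $D^2\Ac_H(y)$. Using $\Hi=\Hi^+\oplus\Hi^-$ and the formula $\nabla_{\frac{1}{2}}\Ac_{H_z}(x)=-P_{E^+}(x)+P_{E^-}(x)+j^{*}\nabla H_z(\cdot,x(\cdot))$, the linearization of the flow equation along $u$ has the shape $\partial_s-L+(\text{compact, with exponentially convergent ends})$; this is Fredholm of index $\op{ind}_{\Hi^-}(x)-\op{ind}_{\Hi^-}(y)$ by the Fredholm-pair calculus recalled in \S\ref{sec:generalmorse}, since $(T_pW^u(x),T_pW^s(y))$ is a Fredholm pair of that index. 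The essential analytic input is surjectivity of the full linearized operator of the universal problem (flow linearization together with the derivative in the $\overline X$-direction): if $\xi$ were a nonzero element of its cokernel, a unique-continuation argument for the (formal adjoint of the) linearized flow equation shows $\xi$ is nonvanishing on an open dense subset of the trajectory, in particular at some $p=u(s_0)$ that is not a rest point, where $g(p)>0$; one then builds a perturbation $\overline X\in\mathcal{K}_{g,1}$ supported in a small ball around $p$ and dominated by $c\,g$ which pairs nontrivially with $\xi$, a contradiction.

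Granting surjectivity, $\mathcal{M}(x,y)$ is a Banach manifold and the projection $\pi\co\mathcal{M}(x,y)\to\mathcal{K}_{g,1}$ is Fredholm of index $\op{ind}_{\Hi^-}(x)-\op{ind}_{\Hi^-}(y)$, so by the Sard--Smale theorem its regular values form a residual subset; for such an $\overline X$ the fiber $\pi^{-1}(\overline X)=W^u(x)\cap W^s(y)$ is cut out transversally and has the expected relative dimension. Intersecting over the finitely many pairs with index difference at most $2$ gives the required residual set $\mathcal{K}_{reg}$, and item (3) of the preceding lemma (the Lyapunov property) together with the analogue for $\widetilde X$ of the structural description of the flow of $X$ guarantees that $W^u(x),W^s(y)$ are genuine submanifolds throughout.

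The main obstacle is the surjectivity step, and specifically the constraint built into $\mathcal{K}_g$: admissible perturbations must vanish on $\op{Crit}(\Ac_H)$ and be dominated by $g\le\frac12\|\nabla_{\frac12}\Ac_H\|$. One must therefore check that every non-constant gradient trajectory passes through points where $g$ is bounded away from zero — which follows from the Lyapunov/Palais--Smale structure, as a trajectory cannot spend all its time near rest points — and combine this with unique continuation for the linearized operator. The latter is slightly nonstandard because that operator carries the nonlocal term $j^{*}\nabla^2 H_z$ rather than a differential operator, so the cleanest route is to argue through the integral representation of the flow (as in the flow lemma for $X$) or through an Agmon-type estimate for the associated parabolic-type equation on $H^{\frac12}$; this is exactly what is done in \cite{AM}, the only new bookkeeping being the $g$-domination of the localized perturbation, handled by placing its support in a region where $\|\nabla_{\frac12}\Ac_H\|$ is bounded below.
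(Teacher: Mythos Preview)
The paper states this theorem without proof, treating it as a known result (the surrounding text makes clear the setup is borrowed from \cite{AM} and adapted following \cite{Hecht} so that the same perturbation scheme works for the hybrid trajectories in \S\ref{sec:isomorphism}). Your outline is exactly the standard Sard--Smale argument one finds in those references: form the universal moduli space over $\mathcal{K}_{g,1}$, show the full linearization is surjective by localizing a compact perturbation along a nonconstant flow line where $g>0$, and apply Sard--Smale to the Fredholm projection.

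Your sketch is correct in substance and identifies the one genuinely new wrinkle relative to \cite{AM}, namely that perturbations must be dominated by $g$ and hence vanish at rest points. Your resolution --- a nonconstant trajectory spends time in a region where $\|\nabla_{\frac12}\Ac_H\|$ (hence $g$) is bounded below, so one can localize there --- is the right one. The only place where you are slightly too cautious is the ``unique continuation'' step: for an ODE on $\Hi$ of the form $\partial_s - L + (\text{bounded zeroth order})$, the adjoint equation is again an ODE, and a solution vanishing on an open interval vanishes identically by standard uniqueness for ODEs in Banach spaces; no Agmon-type argument or special treatment of the nonlocal term $j^*\nabla^2 H_z$ is needed. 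With that simplification, your proposal is a faithful expansion of what the paper leaves implicit.
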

We are therefore in a situation where we can define the Morse homology as in \hyperref[sec:generalmorse]{\S \ref*{sec:generalmorse}}.
We shall use the notation $CM_{*}^{S^1}(H)$ for the Morse complex associated with the parametrized Hamiltonian action functional $\Ac_H$.


\section{The isomorphism}\label{sec:isomorphism}
The main result of this section is that the Floer complex and the Morse complex associated to the parametrized action functional are isomorphic.
\begin{theorem}\label{thm:maintechnical}
	Let $H:S^1\times\R^{2n}\times S^{2N-1} \to \R$ be an admissible parametrized smooth Hamiltonian function as in Definition \ref{def:aph}. Then there exists a chain complex isomorphism
	\[
		\Phi: \left(CM_{*}^{S^1}(H),\partial^M\right)\to \left(CF_*^{S^1}(H),\partial\right).
	\]
	Moreover, $\Phi$ commutes with continuation maps.
\end{theorem}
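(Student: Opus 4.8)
The plan is to construct the isomorphism $\Phi$ via a count of \emph{hybrid moduli spaces}, interpolating between gradient trajectories of the perturbed vector field $\widetilde X = -\nabla_{1/2}\mathcal A_H + \overline X$ on the Hilbert manifold $\mathcal H = H^{1/2}(S^1,\R^{2n})\times S^{2N+1}$ and Floer cylinders in $\widehat X$. This is the equivariant analogue of the scheme of Abbondandolo--Majer and Abbondandolo--Schwarz relating Morse and Floer theory of the action functional: the two complexes have the \emph{same generators} (the critical points of $\mathcal A_{H_z}$ for $z\in\operatorname{Crit}\widetilde f_N$ are precisely the $1$-periodic orbits of $X_{H_z}$, and the parametrized relative Morse index equals $\operatorname{ind}(f_N,z) - \operatorname{CZ}_\tau(\gamma)$ by the Remark after the splitting of $E^0$), so $\Phi$ is forced at the level of $\Q$-modules; the content is that $\Phi$ is a \emph{chain map} and that it is invertible.

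\textbf{Key steps, in order.} First I would set up the hybrid objects: fix $R>0$ and consider triples $(u,\eta,\xi)$ where on $(-\infty,0]$ one has a solution $\xi(s)$ of $\xi' = \widetilde X(\xi)$ together with the $\widetilde f_N$-gradient flow, on $[0,\infty)$ one has a Floer-parametrized solution $(u,\eta)$ of \eqref{eq:fleorparam}, and at $s=0$ the matching condition $u(0,\cdot) = \text{(loop component of }\xi(0))$ and $\eta(0) = \text{(}S^{2N+1}\text{ component of }\xi(0))$ holds, with asymptotics $\xi(-\infty)\in S_{p^-}$, $(u,\eta)(+\infty)\in S_{p^+}$. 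Second, establish \textbf{transversality}: here the fact (flagged in \S\ref{HMAH}) that one perturbs by $\overline X\in\mathcal K_{g,1}$ rather than changing the functional is exactly what is needed to get transversality for the half-trajectory without destroying the Morse side; one runs a Sard--Smale argument over $\mathcal K_{reg}$ together with generic $J$ to make the hybrid moduli spaces $\mathcal M^{\mathrm{hyb}}(S_{p^-},S_{p^+})$ manifolds of dimension $\operatorname{ind}_{\mathcal H^-}(p^-) - \operatorname{ind}_{\mathcal H^-}(p^+)$ (the gluing parameter at $s=0$ is rigid, so there is no extra $+1$). Third, prove the \textbf{compactness} statement for the $0$-dimensional spaces and a broken/gluing analysis for the $1$-dimensional ones: the boundary of a $1$-dimensional hybrid space consists of a Morse breaking on the left (giving $\Phi\partial^M$) and a Floer breaking on the right (giving $\partial\Phi$), plus — and this is the subtle endpoint — the degeneration $R\to 0$ in which the Floer piece shrinks away, which identifies the $0$-dimensional hybrid count with the identity matrix on the diagonal (each generator matched to itself) once the $C^2$-small region and the Palais--Smale/energy bounds from \cite{HZ} are invoked to rule out other contributions. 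This yields $\Phi\circ\partial^M = \partial\circ\Phi$ and that $\Phi$ is upper-triangular with $\pm 1$ on the diagonal with respect to the action filtration, hence an isomorphism of complexes. Fourth, commutation with continuation maps: run the same hybrid construction with an $s$-dependent homotopy $(H_s,J_s)$ and the auxiliary Morse function $\widetilde f$ of \S\ref{sec:generalmorse}, and read off the square from the codimension-one boundary of the corresponding $1$-dimensional parametrized hybrid spaces.

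\textbf{Where the difficulty lies.} The main obstacle is the analytic backbone: one must prove \emph{a priori energy and $C^0_{loc}$ estimates} for hybrid trajectories — bounding the $H^{1/2}$-norm of the half-trajectory $\xi$ using that $\mathcal A_H$ satisfies Palais--Smale and that $\widetilde X$ admits the exponential representation $e^t x^- + x^0 + e^{-t}x^+ + K(t,x,z)$ from the Lemma of \cite{HZ}, and then bounding the Floer energy of $(u,\eta)$ by the action at the matching point — so that Gromov--Floer compactness applies and no energy escapes to infinity in $\widehat X$; simultaneously one needs Fredholm theory for the \emph{mixed} linearized operator (half-line problem on a Hilbert manifold with infinite Morse index, glued to a finite-energy Floer operator), for which the relative-index formalism of \S\ref{sec:generalmorse} and the compact-perturbation structure $D^2\mathcal A_H(x) = L + \text{compact}$ are precisely what make the index finite and the gluing well-posed. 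Once these two ingredients — compactness and the Fredholm/gluing package for hybrid curves — are in place, the chain-map property, the triangularity, and the continuation-commutation are formal consequences of the usual boundary-counting arguments.
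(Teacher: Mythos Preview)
Your proposal is essentially the paper's approach: $\Phi$ is defined by counting hybrid trajectories, i.e.\ Floer half-cylinders $(u,\eta)$ on $Z=[0,\infty)\times S^1$ whose boundary values $(u(0,\cdot),\eta(0))$ lie in the unstable manifold $W^u_X(x,p)$ of the perturbed Morse flow (your explicit half-trajectory $\xi$ on $(-\infty,0]$ is an equivalent encoding of this boundary condition). Transversality is obtained, as you indicate, via a Hecht-type Fredholm result for the non-Lagrangian boundary problem combined with the $\mathcal K_{g,1}$ perturbation of the Morse vector field; compactness proceeds by a maximum principle confining curves to a ball in $\R^{2n}$, followed by $L^2$ and $H^1$ estimates \`a la Hecht. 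The chain-map identity is read from the broken ends of $1$-dimensional hybrid spaces, and invertibility from upper-triangularity in the action filtration with units on the diagonal.

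One confusion to excise: there is no domain parameter ``$R$'' in this construction, and no ``$R\to 0$'' endpoint occurs in the boundary of $1$-dimensional hybrid moduli. The broken configurations are exactly a Morse break on the left plus a Floer break on the right, yielding $\Phi\,\partial^M=\partial\,\Phi$ and nothing more. The diagonal entries of $\Phi$ are computed \emph{separately and directly}: for each critical point $(x,p)$ the only element of $\mathcal M_{\mathrm{hyb}}\bigl((x,p),(x,p)\bigr)$ is the constant solution, since any nonconstant hybrid strictly decreases action; hence the diagonal is $1$. Your ``$R\to 0$ degeneration'' appears to be imported from a PSS-type scheme and is neither needed nor present here; in particular it should not be listed as a third boundary component of the $1$-dimensional moduli space, which would also be inconsistent with your own remark that the matching at $s=0$ is rigid.
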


We shall define this chain map $\Phi: CM^{S^1}(H)\to CF^{S^1}(H)$ by counting parametrized hybrid curves in a similar manner as \cite{AK}.

Let $Z=[0,\infty)\times S^1$ and let $(x,p),(y,q)\in\op{Crit}(\Ac_H)$. We also let $H$ and $X$ be generic as explained in \hyperref[sec:symplectichomology]{Section \ref*{sec:symplectichomology}}. We define

	\begin{align*}
	\Mod_{hyb}((x,p),&(y,q),X)=\\
	&\big\{(u,\eta)\in  H^1_{loc}(Z,\R^{2n})\times H^1_{loc}([0,\infty),S^{2N+1})\mid\\ &\phantom{abcdef}\bar{\partial}_{J_0,H_\eta}(u)=0,\dot{\eta}+\nabla f_N(\eta)=0,\\
	&\phantom{abcdef}(u(0,\cdot),\eta(0))\in W^u_X(x,p),\\
	&\phantom{abcdef}\lim _{s\to\infty} (u(s,\cdot),\eta(s))=(y,q)\big\}\\
	\end{align*}

As before, generically, $\Mod_{hyb}((x,p),(y,q))$ is a smooth manifold of dimension $\op{ind}_{\mathcal{H}^-}(x,p)-(\op{ind}(f_N,q)-\CZ(y))$; moreover if $\op{ind}_{\mathcal{H}^-}(x,p)=\op{ind}(f_N,q)-\CZ(y)$ then the manifold is compact and thus consists of finitely many points.
So we let
\[\Phi(x,p)=\sum_{\op{ind}_{\mathcal{H^-}}(x,p)=\op{ind}(f_N,q)-\CZ(y)}\#\Mod_{hyb}((x,p),(y,q))\cdot (y,q).\]


The rest of this section is devoted to the proof of \hyperref[thm:maintechnical]{Theorem \ref*{thm:maintechnical}}.
We start by showing that the map $\Phi$ is well-defined i.e. that whenever $\op{ind}_{\mathcal{H^-}}(x,p)=\op{ind}(f_N,q)-\CZ(y)$, $\Mod_{hyb}((x,p),(y,q))$ is a smooth compact manifold of dimension 0 (transversality and compactness).
We shall then prove that $\Phi$ is an isomorphism.

\subsection{Transversality}\label{sec:trans}
In this section we show that we can perturb $H$ and the vector field $-\nabla \A_H$ to achieve transversality. The main result is that although the boundary value problem is not Lagrangian, the linearized operator is still Fredholm.

We will now apply an idea from \cite{Hecht} to achieve transversality. We start by recalling the following classical result.
\begin{prop}[\cite{AbMa2}]
There exists a residual set $\mathcal{H}_{reg}\subset \mathcal{H}$ such that $\A_H$ is a Morse function, i.e., every $(x,p)\in\op{Crit}(\Ac_H)$ is nondegenerate.
\end{prop}

The goal of this section is to prove the following theorem.
\begin{theorem}\label{thm:transv}
Let $H\in\mathcal{H}_{reg}$ and $(x,p),(y,q)\in\op{Crit}(\Ac_H)$. Then for a generic $C^3$-small compact perturbation $X$ of $-\nabla \A_H$, the moduli space $\Mod_{hyb}((x,p),(y,q),X)$ is a $C^3$ manifold of dimension $\op{ind}_{\mathcal{H}^-}(x,p)-\op{ind}(f_N,q)+\CZ(y)$ and $X$ satisfies the Morse-Smale condition up to order 2.
\end{theorem}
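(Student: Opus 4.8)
The plan is to realize $\Mod_{hyb}((x,p),(y,q),X)$ as the zero set of a Fredholm section over a Banach manifold of half-cylinders with one \emph{free} end at $\{0\}\times S^1$ constrained to the Morse unstable manifold, and to achieve transversality by perturbing the gradient vector field $X$ alone. The starting observation is that, although the boundary value problem at $\{0\}\times S^1$ is not Lagrangian, the relevant boundary conditions are \emph{compact perturbations of $\mathcal{H}^-$}, so the linearized operator fits the Fredholm--pair formalism of \S\ref{sec:generalmorse}. Concretely, for a weight $\delta>0$ smaller than the spectral gap of the asymptotic operator at the nondegenerate orbit $(y,q)$, one works on the Banach manifold $\mathcal{B}(y,q)$ of pairs $(u,\eta)$, with $u\in H^1_\delta(Z,\R^{2n})$ and $\eta\in H^1_\delta([0,\infty),S^{2N+1})$, converging exponentially to $(y,q)$ and with $(u(0,\cdot),\eta(0))\in W^u_X(x,p)$, and the section $\mathfrak{s}(u,\eta)=\bigl(\dbar_{J_0,H_\eta}u,\ \dot\eta+\nabla f_N(\eta)\bigr)$. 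Its vertical differential is a $\dbar$-type operator on $Z$ with a fixed nondegenerate asymptotic end, coupled to the linearized Morse flow on $\eta$, subject to the linear constraint $(\,\cdot\,,\,\cdot\,)(0)\in T W^u_X(x,p)$. Following the device of \cite{Hecht}, I would split off the invertible model operator $P_{E^+}-P_{E^-}$ coming from $\nabla_{\half}\Ac_H$ and absorb the remaining lower-order terms ($j^\ast\nabla^2 H$, $D^2b$, the trivialization curvature, and the gluing of the two boundary subspaces) into a compact correction, using that $T W^u_X(x,p)$ is a compact perturbation of $\mathcal{H}^-$ of relative dimension $\op{ind}_{\mathcal{H}^-}(x,p)$ and that the asymptotic operator at $(y,q)$ contributes $\op{ind}(f_N,q)-\CZ(y)$. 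This yields that $D\mathfrak{s}_{(u,\eta)}$ is Fredholm of index $\op{ind}_{\mathcal{H}^-}(x,p)-\op{ind}(f_N,q)+\CZ(y)$; equivalently $\bigl(T W^u_X(x,p),\ T_w\mathcal{U}(y,q)\bigr)$ is a Fredholm pair of that index, where $\mathcal{U}(y,q)\subset\mathcal{H}$ is the (generically smooth) manifold of boundary data of Floer half-cylinders asymptotic to $(y,q)$.

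Next I would establish universal transversality by perturbing $X$. By \S\ref{sec:abbondandolo}, for $\overline X\in\mathcal{K}_{g,1}$ the manifold $W^u_X(x,p)$, $X=-\nabla_{\half}\Ac_H+\overline X$, is $C^3$ and depends $C^3$-continuously on $\overline X$, while $\mathcal{U}(y,q)$ does \emph{not} depend on $\overline X$. Form the universal zero set $\widetilde{\Mod}=\{((u,\eta),\overline X)\,:\,\mathfrak{s}(u,\eta)=0\}$ and show $D\mathfrak{s}$ is surjective there. By the previous step a nonzero cokernel element $\zeta$ is detected by the boundary datum $w=(u(0,\cdot),\eta(0))\in W^u_X(x,p)\cap\mathcal{U}(y,q)$, which lies on a negative gradient trajectory of $X$ issuing from $(x,p)$; at any interior time $t_0$ at which this trajectory is not a critical point one may insert a compactly supported $\overline X'\in\mathcal{K}_g$ supported near $\phi_{t_0}(x,p)$, and its transport under the linearized flow to $w$ spans $\mathcal{H}$ modulo $T W^u_X(x,p)$, hence pairs nontrivially with $\zeta$ unless $\zeta=0$. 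Backward unique continuation for the perturbed gradient flow, and the standard unique continuation for $\dbar$, are used to rule out degeneracies; the only trajectory with no non-critical interior time is the constant one, i.e.\ the case $x$ a constant orbit equal to $y$, which is treated by hand and produces the identity contribution to $\Phi$. The Sard--Smale theorem then produces a residual set of $\overline X\in\mathcal{K}_{g,1}$ for which $\mathfrak{s}$ is transverse, so $\Mod_{hyb}((x,p),(y,q),X)$ is a $C^3$-manifold of dimension $\op{ind}_{\mathcal{H}^-}(x,p)-\op{ind}(f_N,q)+\CZ(y)$.

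Finally, by the theorem of Abbondandolo--Majer recalled in \S\ref{sec:abbondandolo}, the set of $\overline X\in\mathcal{K}_{g,1}$ for which $\widetilde X$ is Morse--Smale up to order $2$ on $\mathcal{H}$ is residual; since by (M.5) only finitely many pairs of critical points are relevant, intersecting this residual set with the one above gives the desired $X$. The main obstacle is the surjectivity step: because $J_0$ and $H$ on the Floer side are frozen, the cokernel must be killed using perturbations of the gradient field only, so the unique--continuation and local--perturbation arguments must be carried out entirely on the Morse/boundary side, with care that the gradient trajectory from $(x,p)$ has a non-critical interior time (so that an admissible $\overline X'\in\mathcal{K}_g$, which is forced to vanish on $\op{Crit}(\Ac_H)$, can be localized there). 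A secondary difficulty is the Fredholm property in the absence of a Lagrangian boundary condition, but this is precisely what the Fredholm--pair calculus of \S\ref{sec:generalmorse} together with the device of \cite{Hecht} is built to handle.
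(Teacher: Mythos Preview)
Your proposal is correct and follows essentially the same strategy as the paper: realize the moduli space as the zero set of a Fredholm map (the paper writes this as $\Theta_{(x,p),c_{y,q}}$ on the Hilbert submanifold $H^1_{W^u_X(x,p)}$), deduce the Fredholm property and index from a parametrized version of \cite[Theorem~4.4]{Hecht}, and conclude by the implicit function theorem. The paper's actual proof is far terser than yours---it simply cites Hecht for the Fredholm claim and then invokes the implicit function theorem, leaving the Sard--Smale genericity step implicit---so your explicit universal-transversality argument (perturbing only $\overline X\in\mathcal{K}_{g,1}$ and killing cokernel elements on the Morse side) supplies detail the paper omits.
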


In order to prepare for the proof of Theorem \ref{thm:transv}, we now clarify the definition of $\Mod_{hyb}((x,p),(y,q),X)$ and write it as an affine translation of the zero set of a function between Hilbert spaces. We first observe that the evaluation $u\mapsto u(s,\cdot)$ for smooth maps extends to a map $H^1(Z,\R^{2n})\to H^{1/2}(S^1,\R^{2n})$. So the limit of $u(s,\cdot)$ as $s\to\infty$ is taken in $H^{1/2}(S^1,\R^{2n})$ with its standard topology. For $(x,p)\in\op{Crit}(\Ac_H)$, let $c_{x,p}=(c_p,c_x)$ where $c_{x}\in C^\infty(Z,\R^{2n})$ such that $c_{x}(s,\cdot)= 0$ for $s\le 1$ and $c_{x}(s,\cdot)=x$ for $s\ge 2$ and $c_{p}\in C^\infty(\R_{\geq0},S^{2N+1})$ such that $c_{p}(s)= 0$ for $s\le 1$ and $c_{p}(s)=$ for $s\ge 2$. We now let
\[
	H^1_{W^u_X(x,p)}=\{(\eta,u)\in H^1(\R_{\geq0},S^{2N+1})\times H^1(Z,\R^{2n})\mid (\eta(0),u(0,\cdot)\in W^u_X(x,p)\}.
\]

\begin{prop}
Let $H\in \mathcal{H}_{reg}$, $x,y\in\Per(H)$ and $X$ a nondegenerate gradient-like vector field on $E$ of class $C^1$ with globally defined flow. Then
\begin{enumerate}
\item $H^1_{W^u_X(x,p)}$ is a $C^1$ Hilbert submanifold of $H^1(\R_{\geq0},S^{2N+1})\times H^1(Z,\R^{2n})$
\item The function \[\Theta_{(x,p),c_{y,q}}:H^1_{W^u_X(x,p)}\to L^2(\R_{\geq0},S^{2N+1})\times L^2(Z,\R^{2n}),\quad (\zeta,w)\mapsto(\nabla f_N(\zeta+c_q),\bar{\partial}_{J_0,H_\zeta}(w+c_y))\] is well-defined of class $C^1$ and \[\Mod_{hyb}((x,p),(y,q),X)=c_{y,q}+\Theta_{(x,p),c_{y,q}}^{-1}(0).\]
\end{enumerate}
\end{prop}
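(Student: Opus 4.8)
The plan is to verify each assertion by unwinding the definitions and invoking the standard implicit-function/regularity machinery for the two pieces of the hybrid problem separately, then combining them. Throughout I will use the standard fact (recalled just before the statement) that the restriction $u \mapsto u(s,\cdot)$ extends to a bounded operator $H^1(Z,\R^{2n}) \to H^{1/2}(S^1,\R^{2n})$, and that evaluation at $s=0$ is a submersion; similarly $\eta \mapsto \eta(0)$ is a submersion $H^1(\R_{\geq 0},S^{2N+1}) \to S^{2N+1}$.

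\textbf{Part (1): $H^1_{W^u_X(x,p)}$ is a $C^1$ Hilbert submanifold.} Since $X$ is a nondegenerate $C^1$ gradient-like vector field with globally defined flow, the unstable manifold $W^u_X(x,p) \subset \mathcal{H} = H^{1/2}(S^1,\R^{2n}) \times S^{2N+1}$ is a $C^1$ submanifold (finite-dimensional modulo the fixed reference subspace $\mathcal{H}^-$, with the relative dimension $\op{ind}_{\mathcal{H}^-}(x,p)$ as in Section \ref{sec:generalmorse}); this is precisely the content of the Abbondandolo--Majer theory recalled above. The evaluation map
\[
	\ev_0 : H^1(\R_{\geq0},S^{2N+1})\times H^1(Z,\R^{2n}) \to S^{2N+1}\times H^{1/2}(S^1,\R^{2n}), \quad (\eta,u)\mapsto (\eta(0),u(0,\cdot))
\]
is a bounded linear (hence smooth) submersion, as noted above. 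Then $H^1_{W^u_X(x,p)} = \ev_0^{-1}(W^u_X(x,p))$ is the preimage of a $C^1$ submanifold under a submersion, hence a $C^1$ Hilbert submanifold, with tangent space at $(\eta,u)$ equal to $\ev_0^{-1}\bigl(T_{(\eta(0),u(0,\cdot))}W^u_X(x,p)\bigr)$.

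\textbf{Part (2): the map $\Theta_{(x,p),c_{y,q}}$ and the description of the moduli space.} First I would check that $\Theta$ is well-defined: the shifted loop $\zeta + c_q$ lies in $H^1(\R_{\geq 0},S^{2N+1})$ (using that $c_q$ is the prescribed smooth interpolation with $c_q(s) = q$ for $s\geq 2$), so $\nabla f_N(\zeta+c_q) \in L^2(\R_{\geq0},S^{2N+1})$ by smoothness of $f_N$ and boundedness of the embedding. For the Floer component, $\bar\partial_{J_0,H_\zeta}(w+c_y)$: the point of subtracting $c_y$ is that $w = u - c_y$ decays to $0$ as $s\to\infty$ and $w(s,\cdot) = u(s,\cdot)$ for $s\leq 1$, so $w \in H^1(Z,\R^{2n})$, while for $u = w + c_y$ one has $u(s,\cdot) \to y$ in $H^{1/2}$ exactly encoding the asymptotic condition; the nonlinear Cauchy--Riemann expression, with the $\eta = \zeta + c_q$-dependence of $H_\eta$ built in, maps $H^1$ to $L^2$ and is $C^1$ by the standard quadratic-estimate arguments for Floer's equation (the Hamiltonian term $X_{H_\eta}$ being smooth and uniformly bounded with bounded derivatives on the relevant region). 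Finally, the equality $\Mod_{hyb}((x,p),(y,q),X) = c_{y,q} + \Theta_{(x,p),c_{y,q}}^{-1}(0)$ is a matter of matching up definitions: $(\eta,u) \in \Mod_{hyb}$ iff $\bar\partial_{J_0,H_\eta}(u) = 0$, $\dot\eta + \nabla f_N(\eta) = 0$, the boundary condition at $s=0$ holds, and $(u,\eta) \to (y,q)$; writing $(\eta,u) = c_{y,q} + (\zeta,w)$ with $(\zeta,w) \in H^1_{W^u_X(x,p)}$ translates the boundary condition and the asymptotics into membership in the domain of $\Theta$, and the two PDE conditions into $\Theta_{(x,p),c_{y,q}}(\zeta,w) = 0$. (Here one uses elliptic regularity/bootstrapping to see that an $H^1_{loc}$ solution with these asymptotics is automatically of the required Sobolev class, so nothing is lost by working in $H^1$.)

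\textbf{Main obstacle.} The routine-looking parts (submersivity of evaluation, $W^u_X$ being a $C^1$ manifold) are genuinely routine given the Abbondandolo--Majer framework already set up; the real content — and the step I expect to be most delicate — is verifying that $\Theta$ is $C^1$ as a map \emph{between the specified Hilbert spaces}, i.e. the quadratic estimates and uniform bounds for the Floer operator in the presence of the $\eta$-parametrized Hamiltonian, together with checking that the asymptotic convergence $u(s,\cdot) \to y$ really is captured by $w = u - c_y \in H^1(Z,\R^{2n})$ with the correct exponential weights (near a nondegenerate orbit, $H^1$ on $Z=[0,\infty)\times S^1$ already forces exponential decay, but one should confirm that the finite-energy/exponential-convergence theory applies to the hybrid setup). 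This is where I would spend the bulk of the argument, citing \cite{AK} and the analogous estimates there, and then the transversality (Theorem \ref{thm:transv}) follows by a standard Sard--Smale argument applied to the universal section obtained by also varying the compact perturbation $X$.
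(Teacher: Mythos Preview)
Your proposal is correct and follows essentially the same route as the paper: part (1) via the preimage-under-submersion argument (the paper phrases this as the implicit function theorem, after noting that $T_\chi W^u_X(x,p)$ is a compact perturbation of $(E^-\oplus E^0_-)\times\R^{2N+1}$ and hence complemented), and part (2) via the $L^2$-mapping property of the Floer operator together with exponential decay for the identification of the moduli space. The only cosmetic difference is that the paper writes out the explicit $L^2$ estimate $\|\Theta_{(x,p),c_{y,q}}(\zeta,w)\|_{L^2}^2 = \|\partial_s(w+c_y)\|_{L^2}^2 + \|\partial_t(w+c_y)-X_H(w+c_y)\|_{L^2}^2 + \|\nabla f_N(\zeta+c_q)\|_{L^2}^2$ and bounds each term directly, whereas you argue more qualitatively; both are fine.
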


\begin{proof}
For every $\chi\in W^u_X(x,p)$, its tangent space $T_\chi W^u_X(x,p)$ is a compact perturbation of $(E^-\oplus E^0_-)\times\R^{2N+1}$, which is complemented by $E^+\times\R^{2N+1}$. So (a) follows from the implicit function theorem.

To see that $\Theta_{(x,p),c_{y,q}}$ is well-defined, let $(\zeta,w)\in H^1_{W^u_X(x,p)}$. Then
\begin{equation}\label{eq:theta1}
\Vert\Theta_{(x,p),c_{y,q}}(\zeta,w)\Vert^2_{L^2}=\Vert \partial_s(w+c_y)\Vert^2_{L^2}+\Vert\partial_t(w+c_y)-X_H(w+c_y)\Vert^2_{L^2}+\|\nabla f_N(\zeta+c_q)\|^2_{L^2}
\end{equation}
We observe that
\[\begin{aligned}\Vert \partial_s(w+c_y)\Vert_{L^2(Z)} &\le C_1+\Vert \partial_s w\Vert_{L^2([2,\infty)\times S^1)}\le C_1+\Vert w\Vert_{H^1(Z)},\\\Vert\partial_t(w+c_y)-X_H(w+c_y)\Vert_{L^2(Z)}&\le C_2+ \Vert\partial_t w+\partial_t c_y-X_H(w+c_y)\Vert_{L^2([2,\infty)\times S^1)}\\
&\le C_2+\Vert w\Vert_{H^1(Z)}+\Vert D^2H\Vert_{C^0(S^1\times\R^{2n})}\Vert w\Vert_{L^2(Z)}\end{aligned}
\]

The $C^1$-smoothness of $\Theta_{x,c_y}$ follows from the smoothness of $\bar{\partial}_{J_0,H}$. The last statement follows from the exponential decay of solutions to the $\bar{\partial}$-equation.
\end{proof}

We can now prove Theorem \ref{thm:transv}.
\begin{proof}
We first claim that the linearized operator 
\[D\Theta_{(x,p),c_{y,q}}(\zeta,w):H^1_{W^u_X(x,p)}\to L^2(\R_{\geq0},S^{2N+1})\times L^2(Z,\R^{2n})\]
is a Fredholm operator whose Fredholm index is 
\[\text{ind} D\Theta_{(x,p),c_{y,q}}(\zeta,w)=\op{ind}_{\mathcal{H}^-}(x,p)-\op{ind}(f_N,q)+\CZ(y).
\]
This claim is a slight modification of \cite[Theorem 4.4]{Hecht}, where we add the standard computation of the Fredholm index Hessian of $f_N$.

The result now follows from the implicit function theorem.
\end{proof}

\subsection{Compactness}
In this section, we prove that every sequence of elements in $\Mod_{hyb}$ admits a partial sequence which converges to a ``broken'' hybrid trajectory, more precisely:
\begin{theorem}\label{thm:compactness}
	Let $H$ be an admissible Hamiltonian and let $(x,p),(y,q)\in\op{Crit}(\Ac_H)$. Let $(u_n,\eta_n)_{n\in\N}\in\Mod_{hyb}((x,p),(y,q),X)$ be a sequence of hybrid trajectories. Let $\varphi_X(s,u_n,\eta_n)$, $s<0$, be the trajectories through $(u_n(0,\cdot),\eta_n(0))\in E\times S^{2N+1}$. Then there exists 
	\begin{itemize}
		\item$k+l$ elements of $\op{Crit}(\Ac_H)$, $(x,p)=(x_0,p_0),(x_1,p_1),\ldots,(x_k,p_k)$ and $(y_0,q_0),\ldots,(y_l,q_l)=(y,q)$, with
			\[
				\A_H(x_0,p_0)>\A_H(x_1,p_1)>\cdots>\A_H(x_k,p_k)>\A_H(y_0,q_0)>\cdots>\A_H(y_l,q_l),
			\]
		\item connecting trajectories
			\[
				V_1\subset W^u(x_0,p_0)\cap W^s(x_1,p_1),\ldots,V_p\subset W^u(x_{k-1},p_{k-1})\cap W^s(x_k,p_k),
			\]
		\item curves
			\[
				U_2\in\Mod((y_0,q_0),(y_1,q_1),J),\ldots,U_l\in\Mod((y_{l-1},q_{l-1}),(y_l,q_l),J),	
			\]
		\item and a hybrid trajectory
			\[
				U_1\in\Mod_{hyb}((x_k,p_k),(y_0,q_0),X)
			\]
	\end{itemize}
	such that there exists a subsequence $(u_{n_m},\eta_{n_m})$ of $(u_n,\eta_n)$ which converges to $(V_1,\ldots,V_p,U_1,U_2,\ldots,U_q)$ in the following sense:
	There exists reparametrization times $\tau_m^1,\ldots,\tau_m^k\subset(-\infty\,,\,0]$ and $\sigma_m^1,\sigma_m^l\subset[0\,,\,\infty)$ with $m\in\N$ such that
	\[
		\varphi_X(\tau_m^1,u_{n_m},\eta_{n_m})\to V_1,\ldots,\varphi_X(\tau_m^k, u_{n_m},\eta_{n_m})\to V_k\quad\textrm{in the Hausdorff distance}
	\]
	and
	\[
		u_{n_m},\eta_{n_m}\to U_1, u_{n_m},\eta_{n_m}(\cdot+\sigma_m^1,\cdot)\to U_2,\ldots,u_{n_m},\eta_{n_m},(\cdot+\sigma_m^l,\cdot)\to U_l\quad\textrm{in }H^1_{loc}.
	\]
\end{theorem}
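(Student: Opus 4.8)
The plan is to adapt the standard compactness argument for hybrid moduli spaces (as in \cite{AK}) to our parametrized setting, combining the Floer-theoretic compactness on the $Z=[0,\infty)\times S^1$ end with the Morse-theoretic compactness of \S\ref{sec:generalmorse} on the negative gradient flow end, while carrying the $S^{2N+1}$-factor $\eta$ along in both regimes. First I would establish the uniform a priori bounds: since $\Ac_H$ is nonincreasing along the hybrid trajectory and the flow of $\widetilde X$ (by the Lyapunov property and the representation of the flow in \cite[Lemma 3.7]{HZ}) maps bounded sets to bounded sets, the action values $\Ac_H(u_n(s,\cdot),\eta_n(s))$ are squeezed between $\Ac_H(y,q)$ and $\Ac_H(x,p)$, giving a uniform $H^{1/2}$-bound on $u_n(s,\cdot)$ for all $s$ and $n$. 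The energy $E(u_n)=\int_Z|\partial_s u_n|^2$ is uniformly bounded by $\Ac_H(x,p)-\Ac_H(y,q)$, and the $\eta_n$-component automatically lies in the compact manifold $S^{2N+1}$ with $\dot\eta_n+\nabla f_N(\eta_n)=0$, so its gradient flow behavior is governed by finite-dimensional Morse theory for $\widetilde f_N$.

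Next I would run the standard bubbling-off/breaking analysis on the $s\to+\infty$ end. Because we work in $\R^{2n}$ with an admissible Hamiltonian that is quadratic at infinity, there is no bubbling (the target is exact and the relevant $C^0$-bounds hold, cf.\ the maximum principle arguments in \cite{V,gutt}), so after passing to a subsequence and choosing the reparametrization times $\sigma_m^j$, $u_{n_m}(\cdot+\sigma_m^j,\cdot)$ converges in $H^1_{loc}$ to Floer cylinders $U_2,\dots,U_l$ and the coupled equation $\dot\eta+\nabla f_N(\eta)=0$ forces the $\eta$-components to converge to negative gradient trajectories of $f_N$; standard gluing-compatible exponential decay (already invoked in the proof that $\Theta_{(x,p),c_{y,q}}$ is $C^1$) pins down the intermediate critical points $(y_0,q_0),\dots,(y_l,q_l)$ and the strictly decreasing action values. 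The remaining piece of the limit is the hybrid curve $U_1\in\Mod_{hyb}((x_k,p_k),(y_0,q_0),X)$. For the $s\to-\infty$ end (the negative gradient half), I would appeal to the Morse-theoretic compactness of \S\ref{sec:generalmorse}: the trajectory $\varphi_X(s,u_n,\eta_n)$ for $s<0$ is a genuine $\widetilde X$-flow line lying in the closure of $W^u(x,p)$, and since $\widetilde X$ satisfies PS and the Morse--Smale condition up to order $2$, the standard broken-trajectory compactness (with the Hausdorff-distance convergence $\varphi_X(\tau_m^j,u_{n_m},\eta_{n_m})\to V_j$ after choosing $\tau_m^j\le 0$ at the appropriate action levels) applies verbatim, yielding the connecting trajectories $V_1\subset W^u(x_0,p_0)\cap W^s(x_1,p_1),\dots,V_k$.

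The main obstacle is matching the two compactness theories at the \emph{seam} $s=0$, i.e.\ showing that the weak limit of the boundary data $(u_n(0,\cdot),\eta_n(0))\in W^u_X(x_k,p_k)$ is compatible on the one side with the Morse-flow limit $V_k$ and on the other side with the Floer limit $U_1$, so that no ``energy'' escapes precisely at the interface and the limiting object is a bona fide broken hybrid trajectory with the stated index/action bookkeeping. Concretely, one must upgrade $H^1_{loc}$-convergence of $u_n$ near $s=0$ to $H^{1/2}(S^1,\R^{2n})$-convergence of the slices $u_n(0,\cdot)$ — using the continuity of the evaluation $H^1(Z)\to H^{1/2}(S^1)$ together with the uniform $H^1$-bound on a neighborhood of $\{0\}\times S^1$ coming from the energy bound and elliptic regularity for $\bar\partial_{J_0,H_\eta}$ — and then use the $C^1$-submanifold structure of $W^u_X(x_k,p_k)$ (compact perturbation of $(E^-\oplus E^0_-)\times\R^{2N+1}$) to conclude the limit boundary value lies in the appropriate unstable manifold. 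A dimension/index count via \hyperref[thm:transv]{Theorem \ref*{thm:transv}} then shows that in the relevant dimension no breaking can occur on the hybrid piece itself, so the limit is exactly of the asserted form; I would organize the final write-up as: (1) a priori bounds, (2) extraction of the subsequence and $H^1_{loc}$-limits on both ends, (3) the seam-matching lemma, (4) identification of intermediate critical points and convergence statements, and (5) the index bookkeeping ruling out extra breaking.
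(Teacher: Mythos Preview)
Your outline is a correct high-level strategy in the standard hybrid-curve pattern of \cite{AK}, but it diverges from the paper's proof in both organization and technical emphasis. The paper does not run Floer and Morse compactness separately and then match at the seam; instead it reduces the entire theorem to showing that $\Mod_{hyb}((x,p),(y,q),X)$ is $H^1_{loc}$-precompact, after which the broken-trajectory structure is imported wholesale from \cite{Schwarz:1995yq} and \cite{AM}. Precompactness is obtained via two concrete estimates following \cite{Hecht}: a uniform $L^2(Z_T)$ bound (essentially your a priori step, using the maximum principle of Lemma~\ref{thm:abouzaid} and Lemma~\ref{lem:4.12hecht}) and, crucially, a Cauchy-type inequality
\[
\|\delta w\|_{H^1(Z_{T'})}\le C\big(\|\delta w\|_{L^2(Z_T)}+\|P^-\delta w(0,\cdot)\|_{H^{1/2}}\big),
\]
which together with the compact embedding $H^1\hookrightarrow L^2$ and the precompactness of $P^-\big(W^u(x)\cap\{\Ac_H\ge\Ac_H(y)\}\big)$ forces strong $H^1$-convergence on strips that include the boundary $\{0\}\times S^1$. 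This buys a shorter proof: once precompactness is in hand, nothing further has to be said about the geometry of the broken limit.

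Your seam-matching step is where the two approaches meet, and it is also where your outline is thinnest. Trace continuity $H^1(Z)\to H^{1/2}(S^1)$ plus a uniform $H^1$ bound on a neighborhood of $\{0\}\times S^1$ gives only boundedness (hence at best weak convergence) of the slices $u_n(0,\cdot)$ in $H^{1/2}$; to place the limit boundary value in the $C^1$-submanifold $W^u_X(x_k,p_k)$ you need \emph{strong} $H^{1/2}$-convergence of the slices, which requires strong $H^1$-convergence up to $s=0$. Interior elliptic regularity for $\bar\partial_{J_0,H_\eta}$ does not provide this, because the boundary condition at $s=0$ is non-Lagrangian (as the paper notes explicitly in \S\ref{sec:trans}). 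The paper's inequality \eqref{eq:step2compactness} is precisely the boundary estimate that closes this gap: it is not an alternative to your seam lemma, it \emph{is} your seam lemma. If you pursue your route, you should expect to need \eqref{eq:step2compactness} (or an equivalent estimate controlling $H^1$ near the boundary by $L^2$ together with the $P^-$-part of the boundary data) at exactly that point, rather than appealing to soft trace-plus-regularity arguments.
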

\begin{proof}
	By \cite{Schwarz:1995yq}, we need to show that $\Mod_{hyb}((x,p),(y,q),X)$ is an $H^1_{loc}$-precompact set. Then \cite{AM} gives the desired result.
	
	We prove the $H^1_{loc}$-precompactness via the following two inequalities:
	\begin{equation}\label{eq:step1compactness}
		\|w\|_{L^2(Z_T)}\leq C
	\end{equation}
	where $Z_T:=[0,T]\times S^1\subset Z$ and $C$ is a constant depending on $x,y,T,H,c_y$ and $n$.
	\begin{equation}\label{eq:step2compactness}
		\|\delta w\|_{H^1(Z_{T'},\R^{2n})}\leq C\Big(\|\delta w\|_{L^2(Z_{T},\R^{2n})}+\|P^-\delta w(0,\cdot)\|_{H^{\frac{1}{2}}(S^1)}\Big)
	\end{equation}
	for $T>T'>0$, $\delta w:=w_1-w_2$ with $w_1,w_2\in\Theta_{x,c_y}^{-1}(0)$ and a constant $C$ depending on $T,T'$ and $H$.
	
	Let us first conclude the proof assuming \eqref{eq:step1compactness} and \eqref{eq:step2compactness} hold.
	The first inequality, \eqref{eq:step1compactness}, imply that the restriction to $Z_T$ of any sequence $(w_n)_{n\in\N}$ in $\Theta_{x,c_y}^{-1}(0)$ is uniformly bounded in $H^1(Z_{T},\R^{2n})$.
	Using that the embedding
	\[
		H^1_{W^u(x)}(Z_T,\R^{2n})\hookrightarrow L^2(Z_T,\R^{2n})
	\]
	is compact \cite{AK} and that the set $P^-\big(W^u(x)\cap\big\{\A_H\geq\A_H(y)\big\}\Big)$ is precompact \cite{Hecht}, inequality \eqref{eq:step2compactness} implies that the restriction to $Z_{T'}$ of the sequence $(w_n)_{n\in\N}$ admits an $H^1(Z_{T'},\R^{2n})$ convergent subsequence which concludes the proof.
\end{proof}
It remains to prove the two estimates, \eqref{eq:step1compactness} and \eqref{eq:step2compactness}.
The proof of both estimates will be a sequence of lemmas. We follow the ideas from \cite{Hecht} adapting the results to our context.
The next very general statement is a maximum principle for Floer trajectories.
\begin{lemma}[Abouzaid, \cite{Rit}]\label{thm:abouzaid}
	Let $(W',\w'=d\lambda')$ be an exact  symplectic manifold with contact type boundary $\partial W'$, such that the Liouville vector field points inwards.
	Let $\rho$ be the coordinate near $\partial W'$ defined by the flow of the Liouville vector field starting from the boundary and let $r:=e^\rho$; near the boundary the symplectic form writes   $\w'=d(r\alpha)$
	with $\alpha$ the contact form on $\partial W'$ given by the restriction of $\lambda'$.
	Let $J$ be a compatible almost complex structure such that $J^*\lambda'= dr$ on the boundary.\\
	a) Let  $H:W'\rightarrow \R$ be non negative, and such  that  $H=h(r)$ where $h$ is a convex increasing function near the boundary.
	Let $S$ be a compact Riemann surface with boundary and let $\beta$ be a 1-form such that  $d\beta \geq 0$.
	Then any solution $u:S\rightarrow W'$ of $(du - X_{H}\otimes\beta)^{0,1}=0$ with $u(\partial S) \subset \partial W'$
	is entirely contained in $\partial W'$.\\
	b) Let  $H:\R\times S^1\times W'\rightarrow \R$ be an increasing homotopy, such  that  $H(s,\theta,p,\rho)$ $=H_s^\theta (p,\rho)= h_s(r) $ where $h_s$ are convex increasing functions near the boundary. Let $S$ be a compact Riemann surface with boundary embedded in the cylinder {($\R\times S^1$ with the standard structure)}.
Then any solution $u:S\rightarrow W'$ of $(du - X_{H_s}\otimes d\theta)^{0,1}=0$ with $u(\partial S) \subset \partial W'$
is entirely contained in $\partial W'$.
\end{lemma}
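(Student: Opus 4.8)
The plan is to reduce the lemma to the classical maximum principle, by showing that the ``radial coordinate along $u$'' is a subsolution of a linear second–order elliptic equation on $S$. I would prove part (a) in detail; part (b) is entirely parallel, with $h_s$ in place of $h$ and with the monotonicity of the homotopy playing the role of the hypothesis $d\beta\ge 0$. First, work in a collar of $\partial W'$ on which the Liouville flow identifies $W'$ with $[0,\epsilon)\times\partial W'$; let $\rho$ be the (inward, hence nonnegative) Liouville coordinate, $r=e^\rho$ and $\alpha=\lambda'|_{\partial W'}$, so that $\lambda'=r\alpha$, $\omega'=d(r\alpha)$ there, and $H=h(r)$. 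Then $X_H=-h'(r)R_\alpha$ is a multiple of the Reeb field, so $dr(X_H)=0$ and $\lambda'(X_H)=rh'(r)$; the compatibility hypothesis $J^*\lambda'=dr$ together with the cylindrical form of $J$ near $\partial W'$ yields $\lambda'\circ J=dr$, $\alpha\circ J=r^{-1}dr$ and $JR_\alpha=-r\,\partial_r$ on the collar, and it makes the $\xi$-component of $du$ a $(J|_\xi,i)$-holomorphic section ($\xi=\ker\alpha$ with its $d\alpha$-compatible $J|_\xi$); consequently $u^*d\alpha\ge 0$ as a $2$-form on $S$.

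The key — and only genuinely computational — step is a one-form identity. I would contract Floer's equation $J(du-\beta\otimes X_H)=(du-\beta\otimes X_H)\circ i$ first with $dr$ and then with $\alpha$, obtaining on the preimage of the collar
\[
 d(r\circ u)\circ i=-u^*\lambda'-(rh'(r))\circ u\cdot\beta,\qquad u^*\alpha=-\tfrac{1}{r\circ u}\,d(r\circ u)\circ i-h'(r\circ u)\cdot\beta .
\]
Differentiating the first identity, substituting $u^*\omega'=d(r\circ u)\wedge u^*\alpha+(r\circ u)\,u^*d\alpha$ and then the second identity, and using the elementary fact $d\varphi\wedge(d\varphi\circ i)=-|\nabla\varphi|^2\,dA$, one is led to
\[
 \Delta(\rho\circ u)\,dA=u^*d\alpha+h'(r\circ u)\,d\beta+h''(r\circ u)\,d(r\circ u)\wedge\beta .
\]
Here $u^*d\alpha\ge 0$, and $h'(r\circ u)\,d\beta\ge 0$ since $h$ is increasing and $d\beta\ge 0$, while the last term equals $(r\circ u)h''(r\circ u)\,d(\rho\circ u)\wedge\beta$, i.e.\ a \emph{first}-order term in $\rho\circ u$ with continuous bounded coefficient (as $h''\ge 0$ is bounded on the compact range of $r$ in question). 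Thus $\psi:=\rho\circ u$ satisfies $\Delta\psi-b\cdot\nabla\psi\ge 0$ for some bounded continuous $b$.

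The conclusion is then the weak maximum principle for this elliptic operator: $\psi$ attains its maximum over the compact surface $S$ on $\partial S$, where $u(\partial S)\subset\partial W'$ forces $\psi=0$; since $\psi\ge 0$ everywhere (as $W'$ lies on the inward side of $\partial W'$), we get $\psi\equiv 0$, that is $u(S)\subset\partial W'$. In the settings where this lemma is applied, $r$ is in fact a globally defined coordinate on $W'$, so the collar is all of $W'$ and the argument is literally as above; in general one runs the same argument on the preimage of a collar chosen large enough that no solution with the prescribed boundary condition can reach its inner boundary (equivalently, after passing to the completion of $W'$). The strong maximum principle upgrades this to the statement that $u$ is pushed into $\partial W'$ the instant it touches it. For part (b) the only change is that differentiating the first identity now also produces the term $[\partial_s h_s'](r\circ u)\,ds\wedge d\theta$, which is $\ge 0$ by the monotonicity of the homotopy; since $\beta=d\theta$ has $d\beta=0$ and $S$ is embedded in the cylinder, the rest of the argument applies verbatim.

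The main obstacle I anticipate is the one-form computation of the second paragraph: keeping the orientation and sign conventions consistent throughout (the convention $d(d\varphi\circ i)=-(\Delta\varphi)\,dA$, the sign of $X_H$ relative to the Reeb field, and the verification that $u^*d\alpha\ge 0$), and correctly recognizing the $h''$-contribution as a harmless first-order term rather than a term needing its own sign control. Once the differential inequality $\Delta\psi-b\cdot\nabla\psi\ge 0$ is established, everything else is classical; this is Abouzaid's maximum principle, see \cite{Rit}.
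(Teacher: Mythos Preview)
The paper does not give its own proof of this lemma: it is stated with attribution to Abouzaid and a citation to \cite{Rit}, and is then used as a black box (the sentence immediately following it just records the consequence that all curves stay in a fixed ball $B_R$). Your proposal is precisely the standard argument one finds in the cited reference: compute the Laplacian of the radial coordinate along $u$ from the Floer equation, identify it as a subsolution of a linear second–order elliptic operator using $u^*d\alpha\ge 0$, $h'\ge 0$ and $d\beta\ge 0$ (respectively $\partial_s h_s\ge 0$ in part~(b)), and conclude by the weak maximum principle. So your approach is correct and is exactly the intended one; there is simply nothing in the paper itself to compare it against.
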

The consequence of lemma \ref{thm:abouzaid} and the special form of our Hamiltonians, is the existence of a $R>0$ such that the image of all curves is in the ball $B_R$ of radius $R$.
\begin{lemma}\label{lem:boundxh}
	Let $Z_T:=[0,T]\times S^1\subset Z$ and let $u=w+c_{y}$ be a curve, we have
	\[
		\|X_H(u)\|_{L^2(Z_T)}\leq\sqrt{T}\|H\|_{C^1(S^1\times B_R)}.
	\]
\end{lemma}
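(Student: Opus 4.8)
The plan is to reduce the claim to a pointwise bound on $X_H$ together with the fact that $Z_T=[0,T]\times S^1$ has total measure $T$. First I would invoke the consequence of Lemma~\ref{thm:abouzaid} recorded just above: there is a fixed radius $R>0$, depending only on $H$ (and the geometry of the star-shaped domain), such that the image of every hybrid curve is contained in the ball $B_R\subset\R^{2n}$. In particular, for $u=w+c_y$ we have $u(s,\theta)\in B_R$ for all $(s,\theta)\in Z_T$.

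Next I would establish the pointwise estimate $|X_H^\theta(p)|\le\|H\|_{C^1(S^1\times B_R)}$ for all $(\theta,p)\in S^1\times B_R$. This is immediate from the defining relation $\omega_0(X_H^\theta,\cdot)=dH(\theta,\cdot)$ together with the identity $\omega_0=\langle J_0\,\cdot,\cdot\rangle$ and the fact that $J_0$ is an isometry of the Euclidean metric: these give $|X_H^\theta(p)|=|\nabla_x H(\theta,p)|$, and the $C^1$-norm of $H$ over $S^1\times B_R$ dominates $|\nabla_x H|$ there. Combining this with the containment $u(Z_T)\subset B_R$ and integrating,
\[
\|X_H(u)\|_{L^2(Z_T)}^2=\int_0^T\!\!\int_{S^1}\bigl|X_H^\theta\bigl(u(s,\theta)\bigr)\bigr|^2\,d\theta\,ds\le T\,\|H\|_{C^1(S^1\times B_R)}^2,
\]
and taking square roots gives the asserted inequality $\|X_H(u)\|_{L^2(Z_T)}\le\sqrt{T}\,\|H\|_{C^1(S^1\times B_R)}$.

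I do not expect a genuine obstacle here: the only input that is not a one-line computation is the uniform containment of all hybrid curves in a fixed ball $B_R$, which is precisely the content of the cited maximum-principle statement and may be assumed. A minor point to keep track of is that $X_H$ along a hybrid trajectory really means $X_{H_{\eta(s)}^\theta}$, i.e.\ it also depends on $s$ through the sphere coordinate $\eta(s)\in S^{2N+1}$; since $S^{2N+1}$ is compact the same argument applies verbatim, with $\|H\|_{C^1(S^1\times B_R)}$ interpreted as the supremum of $|\nabla_x H|$ over $S^1\times B_R\times S^{2N+1}$, which is still finite.
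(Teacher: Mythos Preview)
Your proposal is correct and follows essentially the same approach as the paper: use the maximum principle to confine $u$ to $B_R$, bound $|X_H|$ pointwise on $B_R$, then integrate over $Z_T$ (which has measure $T$). The paper's version routes the pointwise bound through the explicit quadratic tail $H=a\|x\|^2$ to obtain $|X_H|\le 2aR=\|H'\|_{C^0(S^1\times B_R)}\le\|H\|_{C^1(S^1\times B_R)}$, whereas you go directly to $|X_H|=|\nabla_x H|\le\|H\|_{C^1(S^1\times B_R)}$; your route is slightly more streamlined but not genuinely different.
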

\begin{proof}
	This is an immediate consequence from the form of $H$. Recall that $H(t,x)=a\Vert x\Vert^2$ if $\Vert x\Vert\ge R$, then
	\begin{align*}
		\|X_H(u)\|_{L^2(Z_T)} &= \left(\int_{S^1}\int_0^T|X_H(u)|^2\right)^{\tfrac{1}{2}}\leq\left(\int_{S^1}\int_0^T|2ar(u)|^2\right)^{\tfrac{1}{2}}\\
		&\leq\left(\int_{S^1}\int_0^T|2aR|^2\right)^{\tfrac{1}{2}}=\sqrt{T}2aR\\
		&=\sqrt{T}\|H'\|_{C^0(S^1\times B_R)}\leq\sqrt{T}\|H\|_{C^1(S^1\times B_R)}.
	\end{align*}
\end{proof}
\begin{lemma}\label{lem:4.12hecht}{\emph{(\cite[Lemma 4.12]{Hecht})}}
	Let the $x$ and $y\in\Per(H)$. Let $\Theta_{x,c_y}$ be the operator
	\[
		\Theta_{x,c_y}: H^1_{W^u(x)}(Z,\R^{2n})\to L^2(Z,\R^{2n}) :\quad w\mapsto\overline{\partial}_{J_0,H}(w+c_y).
	\]
	If we denote by $P^0(w)(s,\cdot)\in H^1\big([0\,,\,\infty),\R^{2n}\big)$ the constant part of $w\in\Theta_{x,c_y}^{-1}(0)$, then $P^0(w)\in C^0\big([0\,,\,\infty),\R^{2n}\big)$ and there exists a constant $C$, depending solely on $x$ and $y$, such that
	\[
		\|P^0(w)\|_{C^0([0\,,\,\infty),\R^{2n})}\leq C\quad\forall w\in\Theta_{x,c_y}^{-1}(0).
	\]
\end{lemma}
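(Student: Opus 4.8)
The plan is to isolate the constant (zero-mode) part $\zeta(s):=P^0(w)(s,\cdot)\in\R^{2n}$ of a solution $w\in\Theta_{x,c_y}^{-1}(0)$ and derive a first-order ODE for it, then bound its solutions uniformly. First I would apply $P_{E^0}$ (integration over $S^1$) to Floer's equation $\bar\partial_{J_0,H}(w+c_y)=0$; writing $u=w+c_y$ and using $\int_{S^1}\partial_\theta u\,d\theta=0$, the zero mode satisfies an equation of the shape
\[
\dot\zeta(s) \;=\; -\int_{S^1} J_0\,X_H\big(\theta,u(s,\theta)\big)\,d\theta \;+\;(\text{error from }c_y),
\]
so that $\|\dot\zeta\|_{L^\infty}$ is controlled by $\|X_H\|_{C^0(S^1\times B_R)}$, which is finite by Lemma~\ref{lem:boundxh} (the image of every curve lies in the fixed ball $B_R$ by the maximum principle, Lemma~\ref{thm:abouzaid}). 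This already shows $\zeta\in C^0([0,\infty),\R^{2n})$ and that $\zeta$ is Lipschitz with a uniform constant.

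Next I would pin down the two ``ends'' of $\zeta$. As $s\to\infty$, $u(s,\cdot)\to y$ in $H^{1/2}$, hence $\zeta(s)\to y_0$, the zero Fourier mode of the periodic orbit $y$; this is a fixed point depending only on $y$. At $s=0$, the boundary condition $(w(0,\cdot),\eta(0))\in W^u_X(x,p)$ forces $\zeta(0)=P_{E^0}(w(0,\cdot))$ to lie in the set $P_{E^0}\big(W^u_X(x,p)\cap\{\Ac_H\ge \Ac_H(y)\}\big)$, which is precompact in $\R^{2n}$ — this is exactly the input cited from \cite{Hecht} (also used in the proof of Theorem~\ref{thm:compactness}). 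So $|\zeta(0)|$ is bounded by a constant depending only on $x$ and $y$.

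It then remains to combine these: $\zeta$ is a Lipschitz path starting in a bounded set $K_{x}$ and limiting to the point $y_0$, with $\|\dot\zeta\|_{L^\infty}\le C(x,y)$. A uniform $C^0$ bound follows once we control the ``time'' the path can spend far from both endpoints. Here I would use that $\Ac_H$ is strictly decreasing along $w$ with $\Ac_H(w(0,\cdot))\le \Ac_H(x)$ and $\Ac_H\ge \Ac_H(y)$ along the whole curve, together with a coercivity/compactness estimate for $\Ac_H$ on the zero-mode directions — concretely, that on the relevant sublevel/superlevel band the quantity $|\zeta(s)|$ cannot run off to infinity without violating the action bounds (this is where the special quadratic form of $H$ at infinity enters). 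Alternatively, and more in the spirit of \cite{Hecht}, one argues directly: if $|\zeta(s_0)|$ were large, then running the gradient flow / using the exponential convergence to $y$ would force $\Ac_H$ to drop below $\Ac_H(y)$, a contradiction. Either way one extracts a bound $\|\zeta\|_{C^0}\le C(x,y)$.

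The main obstacle I expect is the last step: turning the Lipschitz bound on $\dot\zeta$ plus the two endpoint constraints into a genuine \emph{uniform} $C^0$ bound, since a priori the path could be long. The resolution is the action estimate — one must show that the action difference $\Ac_H(x)-\Ac_H(y)$, which is fixed, bounds a weighted length of $\zeta$, using the Lyapunov property $D\Ac_H(\widetilde X)<0$ and the fact that away from critical points this is bounded below in terms of $\|\nabla_{1/2}\Ac_H\|$, whose $E^0$-component controls $|\dot\zeta|$ away from the finitely many critical values. Making this quantitative, uniformly in $w$, is the crux and is where the cited estimates from \cite{Hecht} and the globally-defined-flow lemma of \cite{HZ} do the real work.
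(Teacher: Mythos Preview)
The paper does not give its own proof of this lemma; it is stated with an explicit attribution to \cite[Lemma~4.12]{Hecht} and then used as a black box in the proofs of the compactness estimates \eqref{eq:step1compactness} and \eqref{eq:step2compactness}. So there is no in-paper argument to compare against.

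That said, your route is considerably longer than necessary, and the ``main obstacle'' you flag is an artifact of the detour rather than a genuine difficulty. You already invoke the maximum principle (Lemma~\ref{thm:abouzaid}) to confine the image of $u=w+c_y$ to a fixed ball $B_R\subset\R^{2n}$, but you use this only to bound $\dot\zeta$. The much more direct consequence is a bound on $\zeta$ itself: since $u(s,\theta)\in B_R$ for all $(s,\theta)$ and $B_R$ is convex, the $S^1$-average
\[
P^0(u)(s)=\int_{S^1}u(s,\theta)\,d\theta
\]
also lies in $B_R$, so $|P^0(u)(s)|\le R$ for every $s\ge 0$. Writing $P^0(w)=P^0(u)-P^0(c_y)$ then gives $\|P^0(w)\|_{C^0}\le R+\|P^0(c_y)\|_{C^0}$, a constant determined by $H$ and the fixed reference $c_y$ (hence, with $H$ fixed, by $x$ and $y$). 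Continuity of $P^0(w)$ is immediate from the one-dimensional Sobolev embedding $H^1\big([0,\infty)\big)\hookrightarrow C^0$.

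Consequently your Step~3 is unnecessary, and as written it is also not airtight. A large value of $|\zeta(s_0)|$ does not by itself force $\Ac_H\big(u(s_0,\cdot)\big)<\Ac_H(y)$ without simultaneous control of the nonconstant Fourier modes of $u(s_0,\cdot)$; and the heuristic ``action drop bounds a weighted length of $\zeta$'' would need a quantitative lower bound on $|D\Ac_H(\widetilde X)|$ in terms of $|\dot\zeta|$ alone, which you have not supplied and which is not obviously available. There is also a mild circularity risk: in the paper, the $L^2$ bound \eqref{eq:step1compactness} on $w$ \emph{uses} the present lemma (to control $|w_0(0)|$), so any appeal to that estimate in proving the lemma must be avoided.
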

The proofs of \eqref{eq:step1compactness} and \eqref{eq:step2compactness} follows \cite{Hecht} closely but we recall them for completeness.
\begin{proof}[Proof of \eqref{eq:step1compactness}]
	Let $w\in\Theta_{x,c_y}^{-1}(0)$. We shall prove that for any bounded domain $Z_T:=[0\,,\,T]\times S^1\subset Z$, $w|_{Z_T}$ is uniformly bounded.
	
	Let $u=w+c_y$. The energy of $u$ is given by
	\[
		E(u)=\A_H\big(u(o,\cdot)\big)-\A_H(y)\leq\A_H(x)-\A_H(y).
	\]
	We thus compute $\|\partial_sw\|_{L^2(Z_T)}$ and $\|\partial_tw\|_{L^2(Z_T)}$.
    \[
        \|\partial_sw\|_{L^2(Z_T)}\leq \sqrt{\A_H(x)-\A_H(y)}+\|\partial_sc_y\|_{L^2(Z_T)}
    \]
	We denote the right hand side by $c_0(x,y,c_y)$.
    \[
        \|\partial_tw\|_{L^2(Z_T)}\leq\sqrt{\A_H(x)-\A_H(y)}+\sqrt{T}\|H\|_{C^1(S^1\times B_R)}+\|\partial_tc_y\|_{L^2(Z_T)}
    \]
	We denote this right hand side by $c_1(x,y,c_y,T,H)$.
	
	Looking at $P^0(w)$, we note that
    \[
        \|{P^0}^\perp(w)\|^2_{L^2(Z_T)}\leq c_1^2.
    \]
	This actually shows that the $t$-derivative bounds the nonconstant pat of $w$. By \hyperref[lem:4.12hecht]{Lemma \ref*{lem:4.12hecht}}, we have the inequality $|w_0(0)|\leq C$ with $C>0$, depending on $x$ and $y$.
	Therefore, letting $w_0(s):=w_0(0)+\int_0^s\partial_\sigma w_0(\sigma)d\sigma$ and using H\"older's inequality, we have
    \[
        \|w_0\|^2_{L^2(Z_T)}\leq2\int_0^T\bigg(|w(0)|^2+\Big(\int_0^s|\partial_\sigma w_0(\sigma)|d\sigma\Big)^2\bigg)ds
    \]
\end{proof}
\begin{proof}[Proof of \eqref{eq:step2compactness}]
	By \eqref{eq:step1compactness}, we only need to bound  $\|\nabla\delta w\|_{L^2(Z_{T'})}$. To do so, we are going to use a cut-off function $\beta(s)$ and bound it on $Z_T$.
	Let $\beta:\R\to\R$ be a smooth function such that
	\[
		\beta{s}=\begin{cases}
			1&\textrm{if }s\leq T'\\
			0&\textrm{if} s\geq T'+\frac{T-T'}{2}
		\end{cases}.
	\]
	We have
	\begin{align*}
		\|\nabla\delta w\|_{L^2(Z_{T'})}^2 
		&=\|\overline{\partial}_{J_0}(\beta\delta w)\|_{L^2(Z_T)}^2+\|P^-\delta w(0,.)\|_{H^{\frac{1}{2}}(S^1)}^2 - \|P^+\delta w(0,.)\|_{H^{\frac{1}{2}}(S^1)}^2\\
		&\qquad \textrm{ by \cite[Lemma 3.10]{Hecht}}\\
		&\leq 2(1+\|\beta'\|_{C^0(\R)}^2)\left(\|\overline{\partial}_{J_0}(\beta\delta w)\|_{L^2(Z_T)}^2+\|P^-\delta w(0,.)\|_{H^{\frac{1}{2}}(S^1)}^2 + \|\delta w\|_{L^2(Z_T)}\right)
	\end{align*}
	Using now the mean value theorem, we have
	\begin{align*}
		\|\overline{\partial}_{J_0}(\beta\delta w)\|_{L^2(Z_T)}^2 
		&\leq \|H\|_{C^2(S^1\times\R^{2n}}\|\delta w\|_{L^2(Z_T)}.
	\end{align*}
	We can thus find a constant $\widetilde{C}$, depending on $T,T'$ and $H$ such that
	\[
		\|\nabla(\beta\delta w)\|_{L^2(Z_{T})}^2\leq\widetilde{C}\left(\|\delta w\|^2_{L^2(Z_T)}+\|P^-\delta w(0,.)\|^2_{H^{\frac{1}{2}}(S^1)}\right)
	\]
	and therefore the conclusion.
\end{proof}

\subsection{Bijection}
First, $\Phi$ is a homomorphism. This follows from the fact that the closure of the moduli space of hybrid curves between two critical points of index difference 1 is a 1-dimensional manifold whose boundary consists of all the broken trajectories. See Theorem \ref{thm:compactness}.

Secondly, note that if $(x,p)$ is a critical point of $\Ac_H$, then $\mathcal{M}_{hyb}((x,p),(x,p))$ only consists of the constant solution. This is an immediate consequence of the Crossing Energy Theorem. Secondly, by construction, the morphism $\Phi$ decreases $\Ac_H$.
To prove that $\Phi$ is an isomorphism, let $(x_1,p_1),\ldots,(x_m,p_m)$ be the critical points of $\Ac_H$ of index $k$ arranged in increasing action. Then, we have
\[
	\Phi_k=\left(\begin{array}{cccc}
		1&*&\cdots&*\\
		0&1&\cdots&*\\
		\vdots&\ddots&\ddots&\vdots\\
		0&\cdots&0&1
	\end{array}\right)
\]

Finaly, $\Phi$ commutes with the $U$-map follows mutatis mutandis from \cite{GuH}.

\bibliographystyle{alpha}
\bibliography{bibliography.bib}

\end{document}